\theoremstyle{plain}
   \newtheorem{theorem}{Theorem}[section]
   \newtheorem{proposition}[theorem]{Proposition}
   \newtheorem{lemma}[theorem]{Lemma}
   \newtheorem{corollary}[theorem]{Corollary}
   \newtheorem{problem}{Problem}
   \newtheorem{question}{Question}
\theoremstyle{definition}
   \newtheorem{example}{Example}
\theoremstyle{remark}
   \newtheorem{remark}{Remark}[section]
\author[J.~Borcea]{Julius Borcea}
\address{Department of Mathematics, Stockholm University, SE-106 91 Stockholm,
Sweden}
\email{julius@math.su.se}
\author[P.~Br\"and\'en]{Petter Br\"and\'en}
\address{Department of Mathematics, Royal Institute of Technology, 
SE-100 44 Stockholm, Sweden}
\email{pbranden@math.kth.se}
\keywords{Phase transitions, Lee-Yang theory, P\'olya-Schur theory, 
linear operators, 
polarization, stable polynomials, 
graph polynomials, 
symmetrization, 
apolarity, multiplier sequences}
\subjclass[2000]{Primary: 47B38; Secondary: 05A15, 05C70, 30C15, 32A60, 
46E22, 82B20, 82B26}
\thanks{The first author was partially supported by the 
Swedish Research Council and the Crafoord Foundation. The second author was 
partially supported by the G\"oran Gustafsson Foundation.}
\numberwithin{equation}{section}
\newcommand{\NN}{\mathbb{N}}
\newcommand{\bH}{\mathbb{H}}
\newcommand{\NV}{\mathcal{N}}
\newcommand{\HH}{\mathcal{H}}
\newcommand{\HHH}{\overline{\mathcal{H}}}
\newcommand{\RR}{\mathbb{R}}
\newcommand{\CC}{\mathbb{C}}
\newcommand{\DD}{\mathbb{D}}
\newcommand{\KK}{\mathbb{K}}
\newcommand{\sym}{\mathfrak{S}}
\newcommand{\si}{\mathfrak{s}}
\newcommand{\Trp}{\mathcal{A}}
\newcommand{\te}{\theta}
\renewcommand{\Im}{{\rm Im}}
\renewcommand{\Re}{{\rm Re}}
\def\newop#1{\expandafter\def\csname #1\endcsname{\mathop{\rm
#1}\nolimits}}
\begin{document}

\title[The Lee-Yang and P\'olya-Schur Programs. II.]{The Lee-Yang 
and P\'olya-Schur Programs. II. \\ 
Theory of Stable Polynomials and Applications}

\begin{abstract} 
In the first part of this series we characterized all linear 
operators on 
spaces of multivariate polynomials preserving the property of being  
non-vanishing 
 in products of open circular domains. For such sets 
this completes the multivariate generalization of  the classification program 
initiated 
by P\'olya-Schur for univariate real polynomials. 
We build on these classification 
theorems to develop here a theory of multivariate stable polynomials. 
Applications and examples show that this theory provides 
a natural framework for dealing in a uniform way with 
Lee-Yang type problems in statistical mechanics, combinatorics, and geometric
function theory in one or several variables. In particular, we answer a 
question of Hinkkanen on multivariate apolarity.
\end{abstract}

\maketitle

\tableofcontents

\section*{Introduction}\label{s-1}

In two seminal papers from 1952 \cite{LY,LY1} Lee and Yang proposed the 
program 
of analyzing phase transitions in terms of zeros of the partition function 
and proved a celebrated theorem locating the zeros of the partition 
function of the ferromagnetic Ising model on the imaginary axis in the 
complex magnetic plane. This theorem has since been proved and generalized 
in many ways by e.g.~Asano, Fisher, Newman, Ruelle,
Lieb-Sokal, Biskup {\em et al}, etc; see \S \ref{ss-94} and references 
therein. Nevertheless, the Lee-Yang theorem seems to have retained an aura of 
mystique. In his 1988 Gibbs lecture \cite{ruelle2} Ruelle proclaimed: ``I 
have called this beautiful result a failure because, while it has important 
applications in physics, it remains at this time isolated in mathematics.'' 
Ruelle's statement was apparently motivated by the fact that 
the Lee-Yang theorem also inspired speculations about possible statistical 
mechanics models underlying the zeros of Riemann or Selberg 
zeta functions and the Weil conjectures \cite{hink,new-R,ruelle2} but  
``the miracle has not happened'' \cite{ruelle2}.

Recently, Lee-Yang like problems and techniques have appeared in various 
mathematical contexts such as combinatorics, complex analysis, matrix theory 
and probability theory  
\cite{AC,BBS1,BBS2,BBS3,BBCV,BBL,Br1,car-F,COSW,fisk,HV,hink,ruelle1,sok,W2}. 
The past decade has also been marked by important developments on 
other aspects of phase transitions, conformal invariance, percolation 
theory \cite{KOS,LSW,SW}. However, as Hinkkanen noted in \cite{hink}, the 
power in the ideas behind the Lee-Yang theorem has not yet 
been fully exploited: ``It seems that the theory of polynomials, linear in 
each variable, that do not have zeros in a given multidisk or a more general 
set, has a long way to go, and has so far unnoticed connections to various 
other concepts in mathematics.''

In this paper we show that the Lee-Yang theorem and the mathematics 
around it are intimately connected with the dynamics of zero loci of 
multivariate polynomials under linear transformations and 
Problems \ref{prob1}--\ref{prob2} below. As we point out in 
\S \ref{ss-94}, such connections have been implicitly noted in essentially 
all known proofs and extensions of the Lee-Yang theorem. For instance, 
Lieb and  Sokal \cite{LS} reduced the at the time best Lee-Yang theorem, 
due to  Newman \cite{new2}, to the following statement: 
if $P, Q \in \CC[z_1,\ldots, z_n]$ are 
polynomials which are non-vanishing when all variables are in the open right 
half-plane, then the polynomial 
$
P(\partial / \partial z_1, \ldots, \partial / \partial z_n)
Q(z_1,\ldots, z_n)
$ 
also has this property unless it is identically zero (Theorem \ref{th-LS}). 
Thus, to better understand Lee-Yang type theorems 
one is naturally led to consider the problems of 
describing linear operators on polynomial spaces that preserve the property 
of being non-vanishing when the variables are in prescribed 
subsets of $\CC^n$.

Let us formulate these problems explicitly as in \cite{BB-I}. 
Given an integer $n\ge 1$ and $\Omega \subset \CC^n$ we say 
that $f\in\CC[z_1,\ldots,z_n]$ is 
$\Omega$-{\em stable}
if $f(z_1,\ldots,z_n)\neq 0$ whenever $(z_1,\ldots,z_n)\in\Omega$. 
A $\KK$-linear operator 
$T:V\to \KK[z_1,\ldots,z_n]$, where $\KK=\RR$ or $\CC$ and 
$V$ is a subspace of $\KK[z_1,\ldots,z_n]$, is said to {\em preserve} 
$\Omega$-{\em stability} if for any 
$\Omega$-stable polynomial $f\in V$ the 
polynomial $T(f)$ is either 
$\Omega$-stable or $T(f)\equiv 0$.
For $\kappa=(\kappa_1,\ldots,\kappa_n) \in \NN^n$ let 
$\KK_\kappa[z_1,\ldots, z_n]
=\{f\in \KK[z_1,\ldots, z_n]:\deg_{z_i}(f)\le \kappa_i,1\le i\le n\}$, 
where $\deg_{z_i}(f)$ is the degree of $f$ in $z_i$. By slight abuse of 
terminology, if $\Psi\subset \CC$ and 
$\Omega=\Psi^n$ then $\Omega$-stable polynomials will also be referred to as 
$\Psi$-stable.

\begin{problem}\label{prob1}
Characterize all linear operators 
$T:\KK_\kappa[z_1,\ldots, z_n]\to \KK[z_1,\ldots,z_n]$ that preserve 
$\Omega$-stability for a given set $\Omega\subset\CC^n$ and 
$\kappa\in\NN^n$.
\end{problem}

\begin{problem}\label{prob2}
Characterize all linear operators 
$T:\KK[z_1,\ldots, z_n]\to \KK[z_1,\ldots,z_n]$ that preserve 
$\Omega$-stability, where $\Omega$ are prescribed subsets of 
$\CC^n$.
\end{problem}

In physics 
\cite{sco-sok,sok} it is useful to distinguish between hard-core pair 
interactions (subject to constraints, e.g.~the maximum degree of a 
graph) and soft-core pair interactions (essentially constraint-free). 
By analogy with this dichotomy, one may say that results pertaining to 
Problem \ref{prob1} are ``hard'' or ``algebraic'' (bounded degree) while 
those for Problem \ref{prob2} are ``soft'' or ``transcendental''  
(unbounded degree), cf.~\cite{BB-I}.

For $n=1$, $\KK=\RR$, and $\Omega=\{z\in\CC:\Im(z)>0\}$ 
Problems \ref{prob1}--\ref{prob2} amount to classifying linear operators that 
preserve the set of real polynomials with all real zeros. This question has 
a distinguished history that goes back to 
Hermite, Laguerre, Hurwitz and P\'olya-Schur, see \cite{BBCV} and references 
therein. In particular, in \cite{PS} P\'olya and Schur characterized all 
diagonal operators with this property, 
which led to a rich subsequent literature on this subject 
\cite{BBS1,BBS3,CC1,CCS,CC2,cso,fisk,iserles1,Le,M,pol-coll,RS,schur}.
However, it was not until very recently that full solutions to this question 
-- and, more generally, to Problems \ref{prob1}--\ref{prob2} for $n=1$ and any 
open circular domain $\Omega$ -- were obtained in \cite{BBS1}. 
Quite recently, 
Problems \ref{prob1}--\ref{prob2} were solved in \cite{BB-I} whenever 
$\Omega=\Omega_1\times\cdots\times\Omega_n$ and the $\Omega_i$'s 
are open circular domains. For such sets these 
results complete the multivariate generalization of the classification 
program initiated by P\'olya-Schur \cite{PS}. They also go beyond 
e.g.~\cite{BBS1,BBS3} and have interesting consequences, as we will 
now see. 
 
In Part A we build on the classification theorems of 
\cite{BB-I} to develop a self-contained theory of multivariate stable 
polynomials. 
To begin with, in \S \ref{s-3} we study 
operators on multi-affine polynomials inspired by natural time 
evolutions (symmetric exclusion processes) for interacting 
particle systems \cite{Lig2}. We give a new simple proof of 
\cite[Theorem~4.20]{BBL} (see also \cite{Lig1}) stating that 
these operators preserve stability and extend it to all circular 
domains.
In \S \ref{s-gws} we use these symmetrization procedures to give a new proof 
of  the Grace-Walsh-Szeg\"o coincidence theorem that unlike most proofs 
known so far avoids (univariate) apolarity theory. 
In \S \ref{s-hilf} we establish a ``master composition theorem'' 
that extends to several variables all the classical Hadamard-Schur 
convolution results due 
to Schur-Mal\'o-Szeg\"o, Walsh, 
Cohn-Egerv\'ary-Szeg\"o, de Bruijn, etc \cite{CC1,M,RS}. 
This also generalizes the 
multivariate composition theorems based on the 
Weyl product \cite{BBS3} as well as Hinkkanen's 
theorem \cite{hink} 
and provides a 
unifying framework for results of this type. In \S \ref{s-hard-ps} we 
obtain ``hard'' multivariate   
generalizations of P\'olya-Schur's classification of multiplier sequences 
that extend the ``soft'' theorems of \cite{BBS3}.

As noted in \cite{rota-lect}, the concept of apolarity has a rich 
pedigree going all the way back to Apollonius and was much studied in 
invariant theory, umbral calculus, and algebraic geometry \cite{K-R,s-b}. In 
\cite{rota-lect} Rota adds: ``Grace's [apolarity] theorem is an instance 
of what might be called a sturdy theorem. For almost one hundred years it has 
resisted all attempts at generalization. Almost all known  results
about the distribution of zeros of [univariate] polynomials in the complex 
plane are corollaries of Grace's theorem.'' In \S \ref{s-apol} we  
establish Grace type theorems for 
 multivariate polynomials and provide an answer to 
 a question of Hinkkanen 
\cite[\S 5]{hink}. In 
\S \ref{ss-91} we prove ``hard'' Lieb-Sokal lemmas that sharpen the 
``soft'' ones in \cite{LS} (whose importance in the Lee-Yang program is 
explained in \S \ref{ss-newman}.) 




In Part B we study statistical mechanical and combinatorial applications 
of the theory of stable polynomials developed in Part A and \cite{BB-I}. We 
show that the key steps in existing proofs and extensions of the 
Lee-Yang and Heilmann-Lieb theorems as well as various other theorems on graph 
polynomials follow in a simple and unified manner from this theory. 
These results 
are due to Asano \cite{As}, Ruelle \cite{ruelle-g1,ruelle-g2,ruelle5}, 
Newman \cite{new1,new2}, Lieb-Sokal \cite{LS}, Hinkkanen \cite{hink},
Choe et al \cite{COSW}, Wagner \cite{W2}. 



\section*{{\bf A. Theory of Multivariate $C$-Stable 
Polynomials}}

Let us first fix some of the notation that we will use throughout. Recall 
that the {\em support}  of a polynomial 
$f(z)=\sum_{\alpha \in \NN^n} a(\alpha) z^\alpha \in \CC[z_1,\ldots, z_n]$ 
is the set $\supp(f)=\{\alpha \in \NN^n : a(\alpha) \neq 0\}$, where 
$z=(z_1,\ldots,z_n)\in\CC^n$, $\alpha = (\alpha_1,\ldots, \alpha_n)\in 
\NN^n$, and 
$z^\alpha= z_1^{\alpha_1}\cdots z_n^{\alpha_n}$. 
Set $[n]=\{1,\ldots,n\}$ and $(1^n)=(1,\ldots,1)\in\NN^n$. We say that $f$ is 
of degree at most $\kappa=(\kappa_1,\ldots,\kappa_n) \in \NN^n$ if 
$f\in \CC_\kappa[z_1,\ldots,z_n]$ (cf.~the introduction) and of degree 
$\kappa$ if $\deg_{z_i}(f)= \kappa_i$, $i\in [n]$. Polynomials in 
$\CC_{(1^n)}[z_1,\ldots, z_n]$ are called {\em multi-affine}.

We employ the usual partial order on $\NN^n$: 
if $\alpha=(\alpha_1,\ldots, \alpha_n)\in \NN^n$ and  
$\beta=(\beta_1,\ldots, \beta_n) \in \NN^n$ then 
$\alpha \leq \beta$ if $\alpha_i \leq \beta_i$ for all $i \in [n]$. 
Let $|\alpha|=\alpha_1+\ldots+\alpha_n$, $\alpha!= \alpha_1!\cdots \alpha_n!$, 
and 
$$
\binom \beta \alpha = \begin{cases} 
\frac {\beta!}{\alpha!(\beta-\alpha)!} \mbox{ if } \alpha \leq \beta, \\ 
0 \mbox{ otherwise. }
\end{cases} 
$$

The open unit disk is denoted by $\DD$ and open half-planes bordering on 
the origin  by  
$\bH_\theta = \{z \in \CC : \Im(e^{i\theta}z)>0\}$, where  
$\theta\in[0,2\pi)$. Note that $\bH_0$ is the 
open upper half-plane
while 
$\bH_{\frac{\pi}{2}}$ is the open right half-plane. 
$\bH_0$-stable polynomials 
are referred to as 
{\em stable} polynomials and those with all real coefficients are
called {\em real stable}, cf.~\cite{BB-I}--\cite{BBL}. We denote the sets of 
stable, respectively
real stable polynomials in $n$ variables by $\HH_n(\CC)$, respectively 
$\HH_n(\RR)$. Polynomials in $\CC[z_1,\ldots,z_n]$ which are 
$\bH_{\frac{\pi}{2}}$-stable are said to be 
{\em weakly Hurwitz stable}. In \cite{COSW} these are termed 
polynomials with the {\em half-plane property}. The notions of 
$\bH_{\te}$-stability are 
equivalent modulo rotations for complex polynomials but this is not so 
for real polynomials. However, for real polynomials with non-negative 
coefficients \cite[Theorem 4.5]{BBL} yields the following hierarchy of 
half-plane properties: if such a polynomial is $\bH_0$-stable 
then it is $\bH_{\te}$-stable for any $\te\in[0,\pi]$.

\section{Symmetrization Procedures}\label{s-3}

The symmetric group on $n$ elements, $\sym_n$, acts on 
$\CC[z_1,\ldots, z_n]$ by permuting the variables: 
$\sigma(f)(z_1,\ldots, z_n)=f(z_{\sigma(1)}, \ldots, z_{\sigma(n)})$, 
$\sigma \in \sym_n$, $f \in \CC[z_1,\ldots, z_n]$. Define  
 the {\em symmetrization operator} $\Sym : \CC[z_1,\ldots, z_n] 
\rightarrow \CC[z_1,\ldots, z_n]$ by 
$$
\Sym(f) = \frac 1 {n!} \sum_{\sigma \in \sym_n} \sigma(f). 
$$
Clearly, $\Sym$ is a linear operator whose image consists of 
{\em symmetric polynomials}, that is, 
polynomials invariant under the action of $\sym_n$. 

Recall that a {\em circular domain} in $\CC$ is any open or closed disk, 
exterior of a disk, or half-plane.  The Grace-Walsh-Szeg\"o coincidence 
theorem is an important and very useful result on the geometry of 
polynomials, see, e.g., 
\cite{COSW,grace,M,RS,walsh}. 

\begin{theorem}[Grace-Walsh-Szeg\"o]\label{GWS}
Let $f\in\CC_{(1^n)}[z_1,\ldots, z_n]$ be a symmetric polynomial, $C$ be an 
open or closed circular domain, and $\xi_1, \ldots, \xi_n\in C$.  
Suppose further that either the total degree of $f$ equals 
$n$ or that $C$ is convex (or both). Then there exists at least one point 
$\xi \in C$ such that 
\begin{equation}\label{ssss}
f(\xi_1, \ldots, \xi_n)= f(\xi, \ldots, \xi). 
\end{equation}
\end{theorem}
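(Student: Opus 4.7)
My plan is to deduce the theorem from the fact that the symmetrization operator $\Sym$ preserves $C^n$-stability on multi-affine polynomials---a consequence of the stability-preserving symmetric exclusion operators established in \S \ref{s-3}. I would argue by contrapositive: replacing $f$ by $f - f(\xi_1,\ldots,\xi_n)$, it suffices to show that if $F$ is symmetric and multi-affine with $g(z) := F(z,\ldots,z)$ being $C$-stable, then $F$ itself is $C^n$-stable, contradicting $F(\xi_1,\ldots,\xi_n) = 0$.

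The bridge between univariate and multivariate stability will be an explicit depolarization. Assume first that $f$ has total degree $n$, so $g$ has degree exactly $n$; I factor $g(z) = c_n \prod_{j=1}^n (z - \alpha_j)$ with $\alpha_j \notin C$ (possible since $g$ is $C$-stable), and set
$$
\tilde F(z_1, \ldots, z_n) = c_n \prod_{j=1}^n (z_j - \alpha_j).
$$
Then $\tilde F$ is multi-affine, manifestly $C^n$-stable (each factor is non-vanishing on $C$), and satisfies $\tilde F(z, \ldots, z) = g(z)$. The diagonalization map $P \mapsto P(z,\ldots,z)$ from symmetric multi-affine polynomials in $n$ variables onto polynomials of degree $\leq n$ is a linear bijection (its inverse sends $z^k$ to $e_k(z_1,\ldots,z_n)/\binom{n}{k}$), so $F$ and $\Sym(\tilde F)$---both symmetric multi-affine with the same diagonal---must coincide, and $F = \Sym(\tilde F)$ then inherits $C^n$-stability.

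The main obstacle is precisely this preservation property, and it is where the \S \ref{s-3} machinery does the heavy lifting: the pairwise operators $\frac{1}{2}(I + \sigma_{ij})$ (equivalently, the generators $\sigma_{ij} - I$ of the symmetric simple exclusion semigroup on multi-affine polynomials) preserve multi-affine $C^n$-stability for any circular domain $C$, and $\Sym$ is recovered as the $t\to\infty$ stationary projection of this semigroup. Stability passes to the limit via Hurwitz's theorem on zeros of limits of non-vanishing analytic functions, giving the required statement for $\Sym$. A notable feature of this route is that it never invokes (univariate) apolarity---the only input is the stability analysis of symmetric exclusion in \S \ref{s-3} and the explicit bijection between symmetric multi-affine polynomials and their diagonals.

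The dichotomy in the hypothesis reflects two regimes of the construction. When $f$ has total degree $n$ the argument above applies directly. When the total degree drops to $d < n$, $g$ has only $d$ finite roots and the corresponding $\tilde F = c_d \prod_{j=1}^d(z_j - \alpha_j)$ omits some of the variables; without a convexity assumption $\Sym$ can then fail to preserve stability, as the example $n=2$, $C = \{|z|>1\}$, $F(z_1,z_2) = z_1 + z_2$ shows ($\xi_1 = 2,\ \xi_2 = -3 \in C$ force $\xi = -1/2$, which lies inside the unit disk). In the low-degree regime with $C$ convex I would recover the desired stability by a limiting argument: perturb $g$ to full degree by appending factors with roots at a common point placed far from $C$ (convexity of $C$ excludes the point at infinity from $C$, so such a perturbation remains $C$-stable), apply the degree-$n$ case, and let the perturbation shrink to zero, using Hurwitz again to conclude.
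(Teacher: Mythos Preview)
Your proposal is correct and follows essentially the same route as the paper: both reduce GWS to the implication ``$g(z)=F(z,\ldots,z)$ $C$-stable $\Rightarrow$ $F$ $C^n$-stable'', factor $g$ over its roots, form the product polynomial $\tilde F=c\prod_j(z_j-\alpha_j)$, and invoke Theorem~\ref{symmetrize} to conclude that $F=\Sym(\tilde F)$ is $C^n$-stable. The only minor deviation is your limiting argument in the low-degree convex case---this works, but is unnecessary since Theorem~\ref{symmetrize}(a) already asserts that $\Sym$ preserves $C$-stability on \emph{all} multi-affine polynomials when $C$ is convex, so you may apply it directly to $\tilde F=c_d\prod_{j=1}^d(z_j-\alpha_j)$ without perturbing to full degree.
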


This theorem was essential 
for proving the sufficiency part of the characterization of linear operators 
preserving 
$C$-stability in \cite{BB-I}. We will see here that Theorem~\ref{GWS} is 
actually a consequence of stronger 
(asymmetric) symmetrization procedures on stable polynomials which were used 
in \cite{BBL} to prove correlation inequalities for symmetric exclusion 
processes. More precisely, we will deduce Theorem~\ref{GWS} 
from the following result.

\begin{theorem}\label{symmetrize}
Let $C$ be an open or closed circular domain.
\begin{itemize}
\item[(a)] If $C$ is convex then the 
symmetrization operator $\Sym$ preserves $C$-stability on 
multi-affine polynomials, i.e., 
$\Sym : \CC_{(1^n)}[z_1,\ldots, z_n] 
\rightarrow \CC_{(1^n)}[z_1,\ldots, z_n]$ preserves $C$-stability. 
\item[(b)] If $C$ is non-convex and 
$f \in  \CC_{(1^n)}[z_1,\ldots, z_n]$ is $C$-stable and such that all 
variables are active in $f$ (i.e., $\partial f/\partial z_i\neq 0$, 
$i\in [n]$) then $\Sym(f)$ is $C$-stable. 
\end{itemize} 
\end{theorem}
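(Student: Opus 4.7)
My plan is to argue inductively on $n$, exploiting the group-algebra identity $\Sym_n = \frac{1}{n} R_n \circ \Sym_{n-1}$, where $R_n := \sum_{i=1}^n (i,n)$ and $\Sym_{n-1}$ symmetrizes only the first $n-1$ variables. Given a multi-affine $C$-stable $f$, applying the inductive hypothesis with $z_n$ viewed as a parameter shows that $g := \Sym_{n-1}(f)$ is $C$-stable and symmetric in $z_1,\ldots,z_{n-1}$ (a short argument handles the possibly degenerate situation where $g$ factors as $B(z_1,\ldots,z_{n-1})(z_n-c)$ with $c\in C$). By this partial symmetry, all $(i,n)g$ for $i<n$ coincide, so
$$\Sym_n(f) \;=\; \tfrac{1}{n}\, g + \tfrac{n-1}{n}\, (1,n)\, g.$$
Both parts of the theorem thus reduce to a single key assertion: for every $\lambda\in[0,1]$, the transposition-averaging operator $T_\lambda := \lambda I + (1-\lambda)(1,n)$ preserves $C$-stability on $\CC_{(1^n)}[z_1,\ldots,z_n]$.

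For part~(a), with $C$ convex, I would invoke the stability-preservation criterion of \cite{BB-I}. In the multi-affine setting, that criterion asks us to verify that the symbol
$$G_{T_\lambda}(z,w) \;=\; T_\lambda\!\left[\prod_{i=1}^n(z_i+w_i)\right] \;=\; \prod_{i=2}^{n-1}(z_i+w_i)\cdot\Bigl[\lambda(z_1+w_1)(z_n+w_n)+(1-\lambda)(z_n+w_1)(z_1+w_n)\Bigr]$$
lies in the prescribed bivariate stable class. The first factor is immediate since each $z_i+w_i$ lies in $\bH_0$ when $z_i,w_i\in\bH_0$. For the bracketed factor, set $A=(z_1+w_1)(z_n+w_n)$ and $B=(z_n+w_1)(z_1+w_n)$: then $A/B$ is precisely the classical cross-ratio of the four points $(z_1,z_n;-w_1,-w_n)$. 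Since $z_1,z_n\in\bH_0$ while $-w_1,-w_n\in -\bH_0$, these four points are separated by the real axis, so in any concyclic configuration the pairs $\{z_1,z_n\}$ and $\{-w_1,-w_n\}$ are non-interlaced, which forces $A/B$ to be either nonreal or positive real, but never negative. Hence $\lambda A+(1-\lambda)B\ne 0$ on $\bH_0^{\,n}\times\bH_0^{\,n}$ for all $\lambda\in[0,1]$, completing the key step.

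For part~(b), with $C$ non-convex, convex combinations need not preserve stability, so I would reduce to part~(a) via a Möbius map $\phi$ carrying $C$ onto a half-plane. For multi-affine $f$ in which every variable is active, the pullback $\widetilde f(z):=\prod_i(\gamma z_i+\delta)\cdot f(\phi(z_1),\ldots,\phi(z_n))$ is again multi-affine and $\phi(C)$-stable with all variables active (the activity hypothesis is exactly what prevents a degree drop during denominator clearing), and $\Sym$ commutes with the pullback $f\mapsto\widetilde f$. Applying part~(a) to $\widetilde f$ and translating back yields $C$-stability of $\Sym(f)$. The main obstacle I anticipate is the cross-ratio argument in part~(a): pinpointing the correct joint-stability region required by \cite{BB-I} and rigorously excluding the negative-real cross-ratio configuration is the analytic core of the proof, whereas the inductive reduction and the Möbius transfer are essentially bookkeeping.
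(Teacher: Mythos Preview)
Your inductive decomposition contains an error: when $g=\Sym_{n-1}(f)$ is symmetric only in $z_1,\ldots,z_{n-1}$, the polynomials $(i,n)g$ for $i<n$ are \emph{not} all equal, since the transposition $(i,n)$ places $z_i$ in the distinguished last slot. For $n=3$ and $g=z_1z_2+z_3$ one has $(1,3)g=z_2z_3+z_1\neq z_1z_3+z_2=(2,3)g$. Hence $\Sym_n(f)\neq\tfrac{1}{n}g+\tfrac{n-1}{n}(1,n)g$ in general, and the problem does not reduce to a single application of $T_\lambda$. (It is possible to realize $\tfrac{1}{n}R_n$ on partially symmetric inputs as a composition of several $T_\lambda$'s with different transpositions and weights, but that is a separate computation you have not carried out.)

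More seriously, invoking the stability-preservation criterion from \cite{BB-I} makes the argument circular within the paper's logical framework: the sufficiency direction of that criterion is proved in \cite{BB-I} \emph{using} the Grace--Walsh--Szeg\"o theorem, and \S\ref{s-gws} of the present paper derives Grace--Walsh--Szeg\"o \emph{from} Theorem~\ref{symmetrize}. The paper is explicit that the whole point here is to avoid this loop. Instead, the paper proves the transposition-averaging step (Proposition~\ref{asymsym}) by a direct two-variable reduction and the maximum principle for the harmonic function $\Im\bigl((a+bz)/(c+dz)\bigr)$ (Lemma~\ref{maximum}), and then obtains $\Sym$ not by any finite decomposition but as a uniform limit of iterated maps $f\mapsto\tfrac{1}{2}(f+\tau(f))$ (Lemma~\ref{sym-lim}), concluding via Hurwitz' theorem. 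Your cross-ratio observation is a pleasant alternative picture for why $\lambda A+(1-\lambda)B\neq 0$ on $\bH_0^4$, but to make it usable here you would have to argue it directly on the two-variable polynomial $pf(z,w)+(1-p)f(w,z)$ rather than through the symbol machinery.
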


\begin{remark}\label{max-deg}
If $C$ is non-convex and $f \in  \CC_{(1^n)}[z_1,\ldots, z_n]$ is $C$-stable 
then the condition in Theorem \ref{symmetrize} (b) that all variables are 
active is actually equivalent to the requirement that $f$ has total degree
 $n$, i.e., $\partial^n f / \partial z_1 \cdots \partial z_n \neq 0$. 
\end{remark}

\begin{remark}
One can easily construct examples showing that $\Sym$ does not preserve 
$C$-stability when acting on arbitrary (not multi-affine) polynomials.
\end{remark}

It is not difficult to prove Theorem~\ref{symmetrize} {\em assuming} the 
Grace-Walsh-Szeg\"o theorem. However, in \S \ref{s-gws} we will prove the 
latter {\em via} Theorem~\ref{symmetrize}. This will make the theory 
developed here and in \cite{BB-I} self-contained. 

We will derive Theorem~\ref{symmetrize} from the next result which was first 
proved in \cite[Theorem 4.20]{BBL}. From a 
physical viewpoint \cite{Lig2}, Proposition~\ref{asymsym} implies 
that stability is preserved by the natural time evolution of symmetric 
exclusion processes.

\begin{proposition}\label{asymsym}
Let $f \in \CC_{(1^n)}[z_1,\ldots, z_n]$, $C$ be an open or
closed circular domain, $0\leq p \leq 1$, and $\tau = (ij) \in \sym_n$ be 
a transposition. 
\begin{itemize}
\item[(a)] If $C$ is convex and  $f$ is $C$-stable then so is 
$pf + (1-p)\tau(f)$. 
\item[(b)] If $C$ is non-convex and $f$ is $C$-stable and depending on 
both $z_i$ and $z_j$ then $pf + (1-p)\tau(f)$ is also $C$-stable. 
\end{itemize}
\end{proposition}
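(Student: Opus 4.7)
The plan is to reduce Proposition~\ref{asymsym} to a M\"obius-geometric statement about the symbol of the operator $T_p := pI + (1-p)\tau$, invoking the classification of $C$-stability preservers from the companion paper \cite{BB-I}.

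First I would reduce to $n = 2$. Since $\tau = (ij)$ acts only on the variables $z_i, z_j$, the operator $T_p$ commutes with the specialization $z_k \mapsto \xi_k \in C$ for $k \neq i, j$. For any such choice of $\xi_k$'s, the restriction of $f$ is a bivariate multi-affine polynomial in $(z_i, z_j)$ that is $C$-stable (or identically zero) by the $C$-stability of $f$, so it suffices to prove: if $g \in \CC_{(1^2)}[z_1, z_2]$ is $C$-stable, then so is $h := pg + (1-p)\tilde g$ where $\tilde g(z_1, z_2) := g(z_2, z_1)$.

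Next I would apply the characterization from \cite{BB-I}: $T_p$ preserves $C$-stability on $\CC_{(1^2)}[z_1,z_2]$ iff its symbol
\[
G_{T_p}(z_1, z_2, w_1, w_2) := T_p\bigl[(z_1 + w_1)(z_2 + w_2)\bigr] = p(z_1 + w_1)(z_2 + w_2) + (1-p)(z_1 + w_2)(z_2 + w_1)
\]
is stable in the appropriate product domain. For the convex case~(a), normalize $C = \bH_0$ via a M\"obius transformation; each factor $(z_a + w_b)$ is then nonvanishing on $\bH_0^4$ by positivity of imaginary parts, so a vanishing of $G_{T_p}$ there would force the ratio
\[
\frac{(z_1 + w_1)(z_2 + w_2)}{(z_1 + w_2)(z_2 + w_1)} = \frac{\mu(w_1)}{\mu(w_2)} = -\frac{1-p}{p}
\]
to be a non-positive real number, where $\mu(w) := (z_1 + w)/(z_2 + w)$. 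However, the zero $-z_1$ and pole $-z_2$ of $\mu$ both lie in $\CC \setminus \overline{\bH_0}$, so $\mu(\bH_0)$ is the bounded component of $\CC \cup \{\infty\} \setminus \mu(\RR)$, i.e., an open disk that avoids both $0$ and $\infty$; such a disk
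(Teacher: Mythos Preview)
Your reduction to $n=2$ by freezing the other variables matches the paper. From that point on, however, the approaches diverge, and your route has a structural problem.

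The paper deliberately avoids invoking the classification theorem from \cite{BB-I} (Theorem~\ref{open-halfplane} here). The reason is that the sufficiency direction of that theorem---``symbol stable $\Rightarrow$ $T$ preserves stability''---is proved in \cite{BB-I} \emph{using} the Grace--Walsh--Szeg\H{o} coincidence theorem. The entire purpose of the present section is to give an independent proof chain
\[
\text{Proposition~\ref{asymsym}} \;\Rightarrow\; \text{Theorem~\ref{symmetrize}} \;\Rightarrow\; \text{Theorem~\ref{GWS}},
\]
so that the theory here and in \cite{BB-I} becomes self-contained (the paper says this explicitly just before Proposition~\ref{asymsym}). Your proposal therefore introduces a circularity: you are using a result whose proof rests on GWS to establish the proposition from which GWS is to be derived. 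The paper instead proves the two-variable half-plane case via Lemma~\ref{maximum}, an elementary computation based on the maximum principle for the harmonic function $\Im\bigl((a+bz)/(c+dz)\bigr)$; closed disks then follow by the M\"obius conjugation of Lemma~\ref{translate} (which commutes with $T_p$), and non-convex domains via the inversion of Corollary~\ref{inv}.

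Setting the circularity aside, your cross-ratio argument that the symbol $G_{T_p}$ is $\bH_0$-stable is correct as far as it goes: for fixed $z_1,z_2\in\bH_0$ with $z_1\neq z_2$, the map $\mu(w)=(z_1+w)/(z_2+w)$ sends $\bH_0$ into an open disk $D$ with $0\notin\overline{D}$, and a quotient of two points of such a disk can never be a negative real. But this only shows that the \emph{symbol} is stable; the step from there to ``$T_p$ preserves stability'' is exactly the \cite{BB-I} implication you are not permitted to use here. Your proposal is also truncated before handling part~(b) (non-convex $C$, where the degree condition is essential) and before discussing the endpoints $p\in\{0,1\}$.
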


Our proof of Proposition~\ref{asymsym} relies on the maximum principle for 
harmonic functions which we use to prove the following lemma. 
Another recent elementary proof of Proposition~\ref{asymsym} was 
independently given in \cite{Lig1}. 
Let $\overline{\bH}_0$ be the closed upper half-plane.

\begin{lemma}\label{maximum}
Let $f(z,w)=a+bz+cw+dzw \in \CC[z,w]$ and define 
\begin{eqnarray*}
&&V_1(f)(x)=\Im(a\bar{c})+\Im(a\bar{d}+b\bar{c})x + \Im(b\bar{d})x^2,  \\
&&V_2(f)(x)=\Im(a\bar{b})+\Im(a\bar{d}+c\bar{b})x + \Im(c\bar{d})x^2. 
\end{eqnarray*}
Suppose that $d \neq 0$. 
\begin{enumerate}
\item If $f$ is $\overline{\bH}_0$-stable then $\Im(b/d) >0$ or $\Im(c/d) >0$. 
\item  If $f$ is $\overline{\bH}_0$-stable and $\Im(c/d) >0$ then 
$V_1(f)(x)> 0$ and $V_2(f)(x) \geq 0$ for all 
$x \in \RR$. 
\item  If $f$ is $\overline{\bH}_0$-stable and $\Im(b/d) >0$ then 
 $V_2(f)(x) > 0$ and $V_1(f)(x) \geq 0$ for all 
$x \in \RR$. 
\item  If  $\Im(b/d) >0$ and $\Im(c/d) >0$ then $f$ is 
$\overline{\bH}_0$-stable if and only if for some (and then any) 
$i\in \{1,2\}$ one has $V_i(f)(x)>0$ for all $x \in \RR$.
\end{enumerate} 
\end{lemma}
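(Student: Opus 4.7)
The approach is to analyze the M\"obius transformation $\phi(z)=-(a+bz)/(c+dz)$ obtained by solving $f(z,w)=0$ for $w$, together with its partner $\psi(w)=-(a+cw)/(b+dw)$. Multiplying numerator and denominator by conjugates shows that, for real $x$ with nonvanishing denominator,
\[
\Im\phi(x)=-\frac{V_1(f)(x)}{|c+dx|^2},\qquad \Im\psi(x)=-\frac{V_2(f)(x)}{|b+dx|^2}.
\]
Stability of $f$ is equivalent to $\Im\phi(z)<0$ for every $z\in\overline{\bH}_0$ with $c+dz\ne 0$ (and symmetrically for $\psi$), so the whole lemma amounts to controlling the sign of $\Im\phi$ (or $\Im\psi$) on $\overline{\bH}_0$.

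For part~(1) I argue the contrapositive: if $\Im(b/d)\leq 0$ and $\Im(c/d)\leq 0$, then $f$ is not stable. If $\Im(c/d)<0$ strictly, the pole $-c/d$ of $\phi$ lies in the open upper half-plane, and the local expansion $\phi(z)\sim C/(z+c/d)$ near the pole makes $\Im\phi$ attain both signs in a punctured neighborhood (the case $C=0$ forces $ad=bc$, and then $f$ factors visibly, giving a zero in $\overline{\bH}_0\times\overline{\bH}_0$). The subcase $\Im(b/d)<0$ is handled symmetrically via $\psi$. In the remaining boundary subcase $\Im(b/d)=\Im(c/d)=0$, rescaling so that $d=1$ puts $b,c\in\RR$; the identity above then collapses on real $x\ne -c$ to $\Im\phi(x)=-\Im(a)/(c+x)$, which either changes sign across $x=-c$ or is identically zero, so $\phi(x)\in\overline{\bH}_0$ for some real $x$. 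For parts~(2) and~(3), the hypothesis $\Im(c/d)>0$ places the pole $-c/d$ strictly in the open lower half-plane, so $\phi$ extends holomorphically to a neighborhood of $\overline{\bH}_0$ and stability forces $\Im\phi<0$ strictly on $\overline{\bH}_0$, yielding $V_1(f)(x)>0$ on $\RR$ via the identity. For the weaker inequality $V_2(f)(x)\geq 0$, the same reasoning applied to $\psi$: although its pole $-b/d$ may lie in $\overline{\bH}_0$, stability still forces $\Im\psi<0$ off the pole, so $V_2\geq 0$ on $\RR$, with equality possible only where $b+dx=0$. Part~(3) is symmetric.

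For the substantive half of~(4) assume $\Im(b/d),\Im(c/d)>0$ together with $V_1(f)(x)>0$ on $\RR$. Both poles $-c/d$ and $-b/d$ of $\phi$ now lie strictly below $\RR$, so $\phi$ is holomorphic on an open neighborhood of the compact set $\overline{\bH}_0\cup\{\infty\}\subset\hat{\CC}$, with $\phi(\infty)=-b/d$ and $\Im\phi(\infty)=-\Im(b/d)<0$. The identity together with the assumption $V_1>0$ shows $\Im\phi$ is strictly negative on the entire boundary $\RR\cup\{\infty\}$, hence bounded above by some $-\varepsilon<0$ by compactness. The maximum principle for the harmonic function $\Im\phi$ on $\overline{\bH}_0\cup\{\infty\}$ then gives $\Im\phi<0$ throughout, which is exactly the stability of $f$. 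The equivalence of the $i=1$ and $i=2$ statements in~(4) follows by combining this direction with (2) and (3). The main technical obstacle is the degenerate boundary subcase of part~(1) in which $\Im(b/d)=\Im(c/d)=0$: here no pole of $\phi$ sits strictly inside $\overline{\bH}_0$, so the M\"obius-image argument from the other subcases does not apply, and one must fall back on the explicit real-axis sign-change computation.
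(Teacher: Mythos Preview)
Your argument is correct and follows the same core strategy as the paper: solve $f(z,w)=0$ for one variable to obtain a M\"obius map, translate $\overline{\bH}_0$-stability into a sign condition on the imaginary part, and invoke the maximum principle for harmonic functions. Two small points are worth flagging.

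First, in part~(4) you write ``both poles $-c/d$ and $-b/d$ of $\phi$''. The map $\phi$ has a single pole, at $-c/d$; the number $-b/d$ is the \emph{value} $\phi(\infty)$, not a pole. This slip is harmless: all you need for the maximum-principle step is that $\phi$ is holomorphic with finite values on a neighborhood of $\overline{\bH}_0\cup\{\infty\}$ in $\hat{\CC}$, and for that it suffices that the unique pole $-c/d$ lies in the open lower half-plane. The paper handles the unbounded domain slightly differently, applying the maximum principle on the half-disks $K_r=\{z:\Im z\ge 0,\ |z|\le r\}$ and using $\rho(z)\to\Im(b/d)>0$ as $z\to\infty$; your compactification is an equally clean alternative.

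Second, for part~(1) the paper takes a shorter route than your pole-based contrapositive. It first observes that the partial derivatives $b+dw$ and $c+dz$ must be $\bH_0$-stable, which already forces $\Im(b/d)\ge 0$ and $\Im(c/d)\ge 0$. If both were zero, the translated polynomial $d^{-1}f(z-c/d,\,w-b/d)=zw+(ad-bc)/d^2$ would be $\overline{\bH}_0$-stable, contradicting $\{zw:z,w\in\overline{\bH}_0\}=\CC$. This sidesteps your case split on the location of the pole and the separate treatment of the degenerate boundary subcase.
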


\begin{proof}
Since the partial derivative of a $\bH_0$-stable polynomial is 
$\bH_0$-stable or identically zero (if in doubt apply 
Theorem \ref{open-halfplane}) we have $\Im(c/d)\geq 0$ and $\Im(b/d) \geq 0$ 
if $f$ is $\overline{\bH}_0$-stable. If $\Im(c/d)=\Im(b/d)=0$ then the 
polynomial $d^{-1}f(z-c/d,w-b/d)=wz+(ad-bc)/d^2$ is 
$\overline{\bH}_0$-stable. This is a contradiction since 
$\{wz: w,z \in \overline{\bH}_0\}=\CC$. Hence 
$\Im(c/d) >0$ or $\Im(b/d) >0$ if $f$ is $\overline{\bH}_0$-stable. 

Assume that $\Im(b/d) >0$  and $\Im(c/d)>0$. Solving for $w$ in 
$f(z,w)=0$ we see that $f$ is $\overline{\bH}_0$-stable if and only if 
\begin{equation}\label{harm}
\Im(z)\geq 0 \quad \Longrightarrow \quad 
\rho(z):=\Im\!\left(\frac{a+bz}{c+dz}\right) > 0.
\end{equation}
Now, $\rho$ is a harmonic function in the half-plane 
$\{ z\in\CC : \Im(z)> -\Im(c/d)\}$ which contains $\overline{\bH}_0$. Let 
$K_r =\{ z \in \CC : \Im(z) \geq 0, |z|\leq r\}$. By the maximum principle 
the minimum of $\rho$ on $K_r$ is attained on the boundary of $K_r$.  
For real $x$ we have 
$$
\rho(x)=  
\frac {\Im(a\bar{c})+\Im(a\bar{d}+b\bar{c})x + \Im(b\bar{d})x^2}{|c+dx|^2}.
$$ 
Moreover, $\rho(z) \rightarrow \Im(b/d)>0$ as $z \rightarrow \infty$. 
Since the same arguments apply if one instead solves for $z$ in 
$f(z,w)=0$, this verifies (4). 

If just $\Im(c/d) >0$ we will still have $\rho(x) >0$ for all $x \in \RR$ and 
if only  $\Im(b/d)>0$ then  $\rho(x) >0$ for all 
$x \in \RR \setminus \{-c/d \}$. By symmetry in $z$ and $w$ this verifies 
(2) and (3) of the lemma.   
\end{proof}

Recall the multivariate version of Hurwitz' theorem on the 
``continuity of zeros'', cf. \cite[Footnote 3, p.~96]{COSW}.

\begin{theorem}[Hurwitz' theorem]\label{mult-hur}
Let $D$ be a domain (open connected set) in $\CC^n$ and suppose 
$\{f_k\}_{k=1}^\infty$ is a sequence of non-vanishing 
analytic functions on $D$ that converge to $f$ uniformly on compact 
subsets of $D$. Then $f$ is either 
non-vanishing on $D$ or else identically zero. 
\end{theorem}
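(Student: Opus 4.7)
The plan is to bootstrap from the classical one-variable Hurwitz theorem (assumed known) via the standard trick of restricting holomorphic functions to complex lines, and then to propagate vanishing throughout $D$ using the identity principle for several complex variables. Suppose $f \not\equiv 0$ on $D$; I want to show $f$ is nowhere-vanishing on $D$. Equivalently, assume for contradiction that $f(z_0)=0$ for some $z_0 \in D$. It suffices to conclude that $f$ vanishes on a nonempty open subset of $D$, since by the identity principle on the connected set $D$ this forces $f\equiv 0$ everywhere, contradicting the assumption.

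Choose $r>0$ so small that the closed polydisk $\overline{P}(z_0,r)=\{z\in\CC^n:|z_i-(z_0)_i|\le r,\ i\in[n]\}$ is contained in $D$, and set $P'=\{w\in\CC^n:|w_i-(z_0)_i|<r/2\}$. For each $w\in P'$ consider the affine map $\phi_w:\CC\to\CC^n$, $\phi_w(t)=z_0+t(w-z_0)$. Since $\max_i|w_i-(z_0)_i|<r/2$, we have $\phi_w(U)\subset \overline{P}(z_0,r)\subset D$ for the open disk $U=\{t\in\CC:|t|<2\}$. The pullbacks $g_k(t):=f_k\bigl(\phi_w(t)\bigr)$ are holomorphic and non-vanishing on $U$, while $g_k\to g(t):=f\bigl(\phi_w(t)\bigr)$ uniformly on compact subsets of $U$ (any compact $K\subset U$ maps under $\phi_w$ to a compact subset of $D$, on which $f_k\to f$ uniformly). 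The one-variable Hurwitz theorem forces $g$ to be either nowhere-zero on $U$ or identically zero; but $g(0)=f(z_0)=0$, so $g\equiv 0$ on $U$, and in particular $f(w)=g(1)=0$. Since $w\in P'$ was arbitrary, $f$ vanishes on the nonempty open set $P'$, and the identity theorem on the domain $D$ completes the contradiction.

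The only point requiring any care is the geometric bookkeeping in the second step: one must shrink the polydisk enough so that the complex line segment parametrized by $|t|<2$ still lies inside a polydisk contained in $D$, which is what allows the univariate Hurwitz theorem to be invoked on the open disk $U$ containing both $t=0$ and $t=1$. Beyond this, the argument is a routine reduction to the classical one-dimensional statement, on which we rely as a black box.
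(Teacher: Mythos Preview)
Your argument is correct: the reduction to the univariate Hurwitz theorem via complex lines through $z_0$, followed by the identity principle on the connected open set $D$, is the standard and complete proof. The geometric bookkeeping you flag (shrinking to $P'$ so that $\phi_w(U)$ stays inside the fixed polydisk for all $|t|<2$, hence capturing both $t=0$ and $t=1$) is handled cleanly.

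As for comparison with the paper: there is nothing to compare. The paper does not prove Theorem~\ref{mult-hur}; it merely recalls it as a known fact with a reference to \cite[Footnote~3, p.~96]{COSW}. So you have supplied a proof where the paper supplies only a citation.
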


To deal with discs and exteriors of discs we also need 
Lemmas~\ref{maxsupport} and \ref{translate} below -- which were proved 
in \cite[Lemmas 6.1 and 6.2]{BB-I} -- and Corollary \ref{inv}.

\begin{lemma}\label{maxsupport}
Let $\{C_i\}_{i=1}^n$ be a family of circular domains, 
$f \in \CC[z_1,\ldots, z_n]$ be of degree $\kappa\in\NN^n$, and 
$J \subseteq [n]$ a (possibly empty) set such that $C_j$ is the exterior of 
a disk whenever $j \in J$. Denote by $g$ 
be the polynomial in the variables $z_j$, $j \in J$, obtained by setting 
$z_i = c_i \in C_i$ arbitrarily for $i \notin J$. If $f$ is 
$C_1\times\cdots\times C_n$-stable then 
$\supp(g)$ has a unique maximal element $\gamma$ with respect to the 
standard partial order on $\NN^J$. Moreover, 
$\gamma$ is the same for all choices of $c_i \in C_i$, $i \notin J$. 
\end{lemma}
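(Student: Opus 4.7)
My plan is to reduce to the case $C_j = \CC \setminus \overline{\DD}$ for every $j \in J$ via an affine change of variables (each exterior $C_j = \{|z - a_j| > r_j\}$ is sent to $\CC \setminus \overline{\DD}$ by $z_j \mapsto (z_j - a_j)/r_j$), which preserves stability and the partial order on $\NN^J$. I would then perform the inversion $z_j = 1/w_j$ and set
$$
\tilde{f}(w_J, z_{J^c}) := \Bigl(\prod_{j \in J} w_j^{\kappa_j}\Bigr)\, f\!\left(\{1/w_j\}_{j \in J},\, z_{J^c}\right),
$$
which is a polynomial because $\alpha_j \le \kappa_j$ for every $\alpha \in \supp(f)$. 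Since $z_j \mapsto 1/w_j$ identifies $\CC \setminus \overline{\DD}$ with the punctured disk $\DD^* := \DD \setminus \{0\}$, the $C_1 \times \cdots \times C_n$-stability of $f$ translates into $\tilde{f}$ being non-vanishing on $(\DD^*)^J \times \prod_{i \notin J} C_i$.

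The heart of the argument is to upgrade this to non-vanishing of $\tilde{f}$ on the larger set $\DD^J \times \prod_{i \notin J} C_i$, i.e., across the coordinate hyperplanes $\{w_j = 0\}$. The enabling observation is that $w_j \nmid \tilde{f}$ for each $j \in J$: the coefficient of $w_j^0$ in $\tilde{f}$ (viewed as a polynomial in $w_j$) equals
$$
\sum_{\alpha \colon \alpha_j = \kappa_j} f_\alpha \prod_{k \in J,\, k \neq j} w_k^{\kappa_k - \alpha_k}\, z_{J^c}^{\alpha_{J^c}},
$$
which is not identically zero since $\kappa_j = \deg_{z_j}(f)$ guarantees the existence of some $\alpha \in \supp(f)$ with $\alpha_j = \kappa_j$. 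An irreducible-component dimension argument then forbids any zero of $\tilde{f}$ in $\DD^J \times \prod_{i \notin J} C_i$: a component $V$ of the zero locus passing through such a putative zero has complex dimension $n - 1$ and cannot coincide with any $\{w_j = 0\}$ (else $w_j \mid \tilde{f}$), so each $V \cap \{w_j = 0\}$ has dimension at most $n - 2$; but the nonempty open subset $V \cap \bigl(\DD^J \times \prod_{i \notin J} C_i\bigr)$ of $V$ has dimension $n - 1$, and by the previous paragraph it must be contained in $V \cap \bigcup_{j \in J} \{w_j = 0\}$, which is of strictly lower dimension --- a contradiction.

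Once $\tilde{f}$ is known to be non-vanishing on $\DD^J \times \prod_{i \notin J} C_i$, setting $w_J = 0$ selects exactly the monomials of $f$ with $\alpha_J = \kappa_J$:
$$
\tilde{f}(0, z_{J^c}) = \sum_{\alpha \colon \alpha_J = \kappa_J} f_\alpha \, z_{J^c}^{\alpha_{J^c}} =: P_{\kappa_J}(z_{J^c}),
$$
so $P_{\kappa_J}$ is non-vanishing throughout $\prod_{i \notin J} C_i$. For any $c = (c_i)_{i \notin J} \in \prod_{i \notin J} C_i$, the coefficient of $z_J^{\kappa_J}$ in $g(z_J) := f(z_J, c)$ equals $P_{\kappa_J}(c) \neq 0$, so $\kappa_J \in \supp(g)$; combined with the trivial bound $\alpha \le \kappa_J$ (componentwise) for every $\alpha \in \supp(g)$, this shows $\gamma = \kappa_J$ is the unique maximal element of $\supp(g)$ and manifestly does not depend on the choice of $c$.

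The main obstacle is the hyperplane-extension step in the second paragraph. A naive iteration of the one-variable Hurwitz theorem across the hyperplanes $\{w_j = 0\}$ one at a time does not close, because at each successive restriction one would have to know that the restricted polynomial is still not divisible by any of the remaining $w_{j'}$ --- precisely the ``simultaneous corner'' condition we are trying to establish. The irreducibility/dimension argument instead handles all the hyperplanes at once and sidesteps this circularity.
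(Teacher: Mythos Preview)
The paper does not prove this lemma here; it merely cites \cite[Lemma 6.1]{BB-I}. Your argument must therefore be assessed on its own merits, and it is correct when all the $C_i$ for $i\notin J$ are open (so that $\DD^J\times\prod_{i\notin J}C_i$ is open in $\CC^n$ and $V\cap\bigl(\DD^J\times\prod_{i\notin J}C_i\bigr)$ is a nonempty Euclidean-open subset of the irreducible hypersurface $V$, hence of complex dimension $n-1$). For closed $C_i$ one more line is needed: having established that $P_{\kappa_J}$ is non-vanishing on the interior of $\prod_{i\notin J}C_i$, take any boundary point $c_0$, choose interior points $c_k\to c_0$, and apply multivariate Hurwitz (Theorem~\ref{mult-hur}) to the polynomials $w_J\mapsto\tilde f(w_J,c_k)$ on the connected domain $\DD^J$; the limit cannot be identically zero since $f(\,\cdot\,,c_0)$ is non-vanishing on $\prod_{j\in J}C_j$ by stability, so $\tilde f(0,c_0)=P_{\kappa_J}(c_0)\neq 0$. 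The same reduction handles closed exteriors $C_j$ for $j\in J$.

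Your diagnosis in the final paragraph is accurate and is the real content of the argument. The one-variable Hurwitz approach does yield, for each $j\in J$ separately, that $\deg_{z_j}g=\kappa_j$; but this alone does not force a unique maximal element of $\supp(g)$ (witness $z_1^2z_2+z_1z_2^2$), and the non-divisibility $w_j\nmid\tilde f$ does not descend to restrictions $\tilde f|_{w_{j'}=0}$, so the naive iteration stalls. Your simultaneous treatment---bounding $\dim\bigl(V\cap\bigcup_{j\in J}\{w_j=0\}\bigr)\le n-2$ while $V$ meets the open polydomain in dimension $n-1$---cleanly sidesteps this. An iterated Hurwitz proof can be made to work (and is closer in spirit to the toolkit of \cite{BB-I}), but it requires extra bookkeeping: one shows inductively that each restriction $\tilde f|_{w_S=0}$ is either non-vanishing on $(\DD^*)^{J\setminus S}\times U$ or identically zero, and then argues separately that the identically-zero alternative never occurs. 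Your algebraic-geometric route is more direct and yields the sharper conclusion $\gamma=\kappa_J$ in one stroke.
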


An immediate consequence of Lemma~\ref{maxsupport} is the following.

\begin{corollary}\label{inv}
Let $\kappa\in\NN^n$ and $I_\kappa : \CC_\kappa[z_1,\ldots, z_n] \rightarrow 
\CC_\kappa[z_1, \ldots, z_n]$ be the linear operator defined by 
$I_\kappa(z^\alpha)=z^{\kappa-\alpha}$, $\alpha\le \kappa$.  
Then $I_\kappa$ restricts to a bijection between the set of $\DD$-stable 
($\overline{\DD}$-stable) polynomials in 
$\CC_\kappa[z_1,\ldots, z_n]$ and the set of 
$\CC\setminus \overline{\DD}$-stable 
($\CC\setminus {\DD}$-stable) polynomials in 
$\CC_\kappa[z_1,\ldots, z_n]$ of degree $\kappa$. 
\end{corollary}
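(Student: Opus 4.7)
The map $I_\kappa$ is visibly an involution on $\CC_\kappa[z_1,\ldots,z_n]$, since $I_\kappa(I_\kappa(z^\alpha))=I_\kappa(z^{\kappa-\alpha})=z^\alpha$; so it is a bijection of that space, and it suffices to check that each side of the claimed bijection is sent into the other. The engine of the argument is the identity
\[
I_\kappa(f)(z) \,=\, z_1^{\kappa_1}\cdots z_n^{\kappa_n}\,f(1/z_1,\ldots,1/z_n),
\]
valid whenever all $z_i\neq 0$, together with the elementary observation that inversion $w\mapsto 1/w$ maps $\DD\setminus\{0\}$ bijectively onto $\CC\setminus\overline{\DD}$ and $\overline{\DD}\setminus\{0\}$ onto $\CC\setminus\DD$.

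The forward direction follows at once: if $f\in\CC_\kappa$ is $\DD$-stable and $z\in(\CC\setminus\overline{\DD})^n$, then every $1/z_i$ lies in $\DD$, so $f(1/z)\neq 0$ and $z^\kappa\neq 0$, giving $I_\kappa(f)(z)\neq 0$. The degree statement is free because the coefficient of $z^\kappa$ in $I_\kappa(f)$ equals $f(0)$, which is nonzero since $0\in\DD^n$. The $\overline{\DD}$-stable case is identical after relaxing strict inequalities.

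The reverse direction requires more care, and this is where Lemma~\ref{maxsupport} enters. Let $g\in\CC_\kappa$ be $(\CC\setminus\overline{\DD})$-stable of degree $\kappa$, and pick $w\in\DD^n$. If every $w_i\neq 0$, then $I_\kappa(g)(w)=w^\kappa g(1/w)\neq 0$ by the same inversion argument. Otherwise, let $S=\{i:w_i=0\}\neq\emptyset$ and define $\tilde g\in\CC[z_j:j\notin S]$ as the coefficient of $\prod_{i\in S}z_i^{\kappa_i}$ when $g$ is regarded as a polynomial in $(z_i)_{i\in S}$ over $\CC[z_j:j\notin S]$. A direct coefficient comparison gives
\[
I_\kappa(g)(w)\,=\,I_{\kappa'}(\tilde g)\bigl((w_j)_{j\notin S}\bigr),\qquad \kappa':=(\kappa_j)_{j\notin S}.
\]
I now apply Lemma~\ref{maxsupport} with $J=S$ and all $C_i=\CC\setminus\overline{\DD}$: for every $c\in(\CC\setminus\overline{\DD})^{[n]\setminus S}$ the polynomial $g(\,\cdot\,,c)\in\CC[z_S]$ has a unique maximal support element, independent of $c$; since $g$ has degree $\kappa$ that element must be $(\kappa_i)_{i\in S}$, with corresponding coefficient exactly $\tilde g(c)$. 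Hence $\tilde g(c)\neq 0$ for all such $c$, so $\tilde g$ is $(\CC\setminus\overline{\DD})$-stable; moreover its coefficient of $\prod_{j\notin S}z_j^{\kappa_j}$ equals $b(\kappa)\neq 0$, so $\deg\tilde g=\kappa'$. By induction on $n$, $I_{\kappa'}(\tilde g)$ is $\DD$-stable, and the displayed identity yields $I_\kappa(g)(w)\neq 0$. The $\overline{\DD}$-stable case is verbatim after swapping the pairs $(\DD,\CC\setminus\overline{\DD})\leftrightarrow(\overline{\DD},\CC\setminus\DD)$.

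The main obstacle is precisely this boundary case $S\neq\emptyset$: the identity $z^\kappa f(1/z)$ is unavailable there, and one must show that the ``top coefficient'' $\tilde g$ of $g$ in the vanishing variables simultaneously inherits $(\CC\setminus\overline{\DD})$-stability and retains full degree in the surviving variables. Lemma~\ref{maxsupport} delivers both statements in a single stroke, which is what lets the induction on $n$ go through.
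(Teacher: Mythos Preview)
Your proof is correct and follows exactly the route the paper intends: the corollary is stated there as ``an immediate consequence of Lemma~\ref{maxsupport}'', and you have spelled out precisely how that lemma handles the only nontrivial point, namely the evaluations of $I_\kappa(g)$ at points of $\DD^n$ with some vanishing coordinates. One small tightening: your assertion that the unique maximal element $\gamma$ equals $(\kappa_i)_{i\in S}$ ``since $g$ has degree $\kappa$'' tacitly uses $b(\kappa)\neq 0$, which itself needs a preliminary application of Lemma~\ref{maxsupport} with $J=[n]$ (degree $\kappa$ alone does not force $\kappa\in\supp(g)$); you invoke $b(\kappa)\neq 0$ two lines later, so just move that observation up.
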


If $\{C_i\}_{i=1}^n$ is a family of circular domains and 
$\kappa=(\kappa_1,\ldots,\kappa_n) \in \NN^n$ we let 
$\NV_\kappa(C_1,\ldots,C_n)$ be the set of 
$C_1 \times \cdots \times C_n$-stable polynomials in  
 $\CC_\kappa[z_1,\ldots,z_n]$ that have degree $\kappa_j$ in 
$z_j$ whenever $C_j$ is non-convex. Note that if all $C_j$ are convex then 
$\NV_\kappa(C_1,\ldots,C_n)$ consists of all 
$C_1 \times \cdots \times C_n$-stable polynomials in  
$\CC_\kappa[z_1,\ldots,z_n]$. 

Recall that a {\em M\"obius transformation} is a bijective 
conformal map of the extended complex plane $\widehat{\CC}$ given by 
\begin{equation}\label{mobius}
\phi(\zeta)= \frac {a\zeta+b}{c\zeta+d}, \quad a,b,c,d \in \CC, ad-bc =1. 
\end{equation}
Note that one usually has the weaker requirement $ad-bc \neq 0$ but 
this is equivalent to \eqref{mobius} which proves to be 
more convenient. 

\begin{lemma}\label{translate}
Suppose that $C_1, \ldots, C_n, D_1,\ldots, D_n$ are open circular domains and 
$\kappa=(\kappa_1,\ldots,\kappa_n) \in \NN^n$. Then there are M\"obius 
transformations 
\begin{equation*}
\zeta\mapsto \phi_i(\zeta)=\frac{a_i\zeta+b_i}{c_i\zeta+d_i},\quad i\in [n],
\end{equation*}
as in \eqref{mobius} such that the (invertible) 
linear transformation  
$\Phi_\kappa:  \CC_\kappa[z_1,\ldots, z_n] \rightarrow 
\CC_\kappa[z_1, \ldots, z_n]$ defined by 
\begin{equation*}
\Phi_\kappa(f)(z_1,\ldots,z_n)
=(c_1z_1+d_1)^{\kappa_1}\cdots (c_nz_n+d_n)^{\kappa_n}
f(\phi_1(z_1),\ldots,\phi_n(z_n))
\end{equation*}
restricts to a bijection between $\NV_\kappa(C_1,\ldots,C_n)$ and 
$\NV_\kappa(D_1,\ldots,D_n)$.
\end{lemma}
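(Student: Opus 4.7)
The plan is to choose, for each $i \in [n]$, a Möbius transformation $\phi_i$ sending $D_i$ onto $C_i$ (in the Riemann-sphere sense) using the transitivity of $\mathrm{PSL}_2(\CC)$ on open circular regions, and to normalize the coefficients so that $a_id_i - b_ic_i = 1$. Expanding $f = \sum_{\alpha \le \kappa} a(\alpha) z^\alpha$ and substituting,
\[
\Phi_\kappa(f)(z_1,\ldots,z_n) \;=\; \sum_{\alpha \le \kappa} a(\alpha) \prod_{i=1}^n (a_iz_i+b_i)^{\alpha_i}(c_iz_i+d_i)^{\kappa_i-\alpha_i},
\]
which is manifestly a polynomial in $\CC_\kappa[z_1,\ldots,z_n]$, and clearly linear. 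Invertibility is immediate: using $a_id_i - b_ic_i = 1$, the inverse Möbius transformations $\phi_i^{-1}(\zeta) = (d_i\zeta-b_i)/(-c_i\zeta+a_i)$ yield an operator $\Psi_\kappa$ of the same form on $\CC_\kappa$, and a direct calculation verifies $\Psi_\kappa\circ\Phi_\kappa = \Phi_\kappa\circ\Psi_\kappa = \mathrm{id}$.

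The stability half of the bijection follows almost for free. If $f$ is $C_1\times\cdots\times C_n$-stable and $w \in D_1\times\cdots\times D_n$ satisfies $c_iw_i+d_i \ne 0$ for every $i$, then $\phi_i(w_i)\in C_i\subset\CC$, so $f(\phi_1(w_1),\ldots,\phi_n(w_n))\ne 0$ and the prefactor $\prod_i(c_iw_i+d_i)^{\kappa_i}$ is nonzero, whence $\Phi_\kappa(f)(w)\ne 0$. If some $c_jw_j+d_j = 0$, specializing the expansion at $w_j = -d_j/c_j$ gives, via the identity $a_jw_j+b_j = -1/c_j$,
\[
\Phi_\kappa(f)(w) \;=\; (-1/c_j)^{\kappa_j}\sum_{\alpha:\alpha_j=\kappa_j} a(\alpha)\prod_{i\ne j}(a_iw_i+b_i)^{\alpha_i}(c_iw_i+d_i)^{\kappa_i-\alpha_i},
\]
i.e.\ $(-1/c_j)^{\kappa_j}$ times the value at the remaining variables of the analogous Möbius operator applied to the $z_j^{\kappa_j}$-coefficient polynomial $f^{(j)}$ of $f$. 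Lemma~\ref{maxsupport} ensures that when $C_j$ is non-convex this $f^{(j)}$ is itself appropriately stable on $C_{-j}$, so $\Phi_\kappa(f)(w)\ne 0$ here as well. The mirror argument applied to $\Psi_\kappa$ delivers $D$-stability $\Rightarrow$ $C$-stability, and hence $\Phi_\kappa$ restricts to a bijection between the $C$-stable and $D$-stable polynomials in $\CC_\kappa$.

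The delicate and central step, which I expect to be the main obstacle, is matching the degree condition embedded in $\NV_\kappa$. When $D_j$ is non-convex the target requires degree exactly $\kappa_j$ in $z_j$; from the explicit expansion, the leading coefficient of $\Phi_\kappa(f)$ in $z_j$ equals $c_j^{\kappa_j}$ times the value of $\Phi'(f^{(j)})$ in the other variables, i.e.\ up to a nonzero scalar the evaluation of $f^{(j)}$ at $\phi_j(\infty) = a_j/c_j$. One must therefore exploit the freedom in choosing $\phi_j$ within the Möbius orbit taking $D_j$ to $C_j$ so that $\phi_j(\infty)$ lies where $C_j$-stability of $f$ forces the $z_j^{\kappa_j}$-coefficient polynomial of $f$ to be nonvanishing; this is where Corollary~\ref{inv} (for the disk/exterior pairings) and Lemma~\ref{maxsupport} (for locating and forcing the nonvanishing of the extremal support element) do the heavy lifting. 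Combined with the dual analysis for $\Psi_\kappa$, this completes the desired bijection $\NV_\kappa(C_1,\ldots,C_n) \leftrightarrow \NV_\kappa(D_1,\ldots,D_n)$.
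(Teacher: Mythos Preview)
The paper does not give a proof of Lemma~\ref{translate}; it is merely quoted from \cite[Lemma~6.2]{BB-I} (see the sentence ``Lemmas~\ref{maxsupport} and \ref{translate} below -- which were proved in \cite[Lemmas 6.1 and 6.2]{BB-I}''). So there is no in-paper argument to compare against, only your own.

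Your overall plan is the right one, and the stability half is essentially sound. Two minor points there: you should make explicit the induction over the set of indices $j$ with $c_jw_j+d_j=0$, and you should note that Lemma~\ref{maxsupport} is needed not merely to say $f^{(j)}$ is $\prod_{i\ne j}C_i$-stable but to guarantee $f^{(j)}\in\NV_{\kappa_{-j}}(C_{-j})$, i.e.\ that the degree in each remaining non-convex variable is again maximal, so that the induction hypothesis actually applies.

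The degree-condition paragraph, however, contains a concrete error. You assert that (for $c_j\ne 0$) the $z_j^{\kappa_j}$-coefficient of $\Phi_\kappa(f)$ is, up to the factor $c_j^{\kappa_j}$, the transform $\Phi'$ applied to the top coefficient $f^{(j)}$ of $f$. That is false: writing $f=\sum_{k\le\kappa_j} g_k(z_{-j})\,z_j^k$ and expanding, the $z_j^{\kappa_j}$-coefficient of $\Phi_\kappa(f)$ is
\[
\sum_{k\le\kappa_j} a_j^{\,k}c_j^{\,\kappa_j-k}\,\Phi'(g_k)
\;=\; c_j^{\kappa_j}\,\Phi'\!\bigl(f\big|_{z_j=a_j/c_j}\bigr),
\]
i.e.\ it is $\Phi'$ applied to the \emph{evaluation} of $f$ at $z_j=a_j/c_j$, not to $f^{(j)}$. (When $c_j=0$ only the $k=\kappa_j$ term survives and one does obtain $a_j^{\kappa_j}\Phi'(f^{(j)})$.) Your phrase ``the evaluation of $f^{(j)}$ at $\phi_j(\infty)$'' is also internally inconsistent, since $f^{(j)}$ does not depend on $z_j$.

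With the correct identification the step is actually simpler than you suggest, and no special choice of $\phi_j$ is needed. If $D_j$ is non-convex then $\infty$ lies in the Riemann-sphere closure $\widehat{D_j}$, hence $\phi_j(\infty)\in\widehat{C_j}$. Either $\phi_j(\infty)=\infty$, forcing $c_j=0$ and $C_j$ non-convex, in which case $f\in\NV_\kappa(C_1,\ldots,C_n)$ gives $f^{(j)}\not\equiv 0$ and the leading coefficient $a_j^{\kappa_j}\Phi'(f^{(j)})$ is nonzero; or $\phi_j(\infty)=a_j/c_j\in C_j$ is a finite point, and then $f|_{z_j=a_j/c_j}$ is $\prod_{i\ne j}C_i$-stable (hence $\not\equiv 0$) directly from the $C$-stability of $f$, so $c_j^{\kappa_j}\Phi'(f|_{z_j=a_j/c_j})\not\equiv 0$. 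In either case $\deg_{z_j}\Phi_\kappa(f)=\kappa_j$, and the symmetric argument for $\Psi_\kappa$ closes the bijection. The appeal to ``freedom in choosing $\phi_j$'' and to Corollary~\ref{inv} is unnecessary.
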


\begin{proof}[Proof of Proposition~\ref{asymsym}]
Clearly, it is enough to prove the proposition for closed circular domains.
Suppose first that $C$ is the closed upper half-plane $\overline{\bH}_0$ 
and let 
$f \in \CC_{(1^n)}[z_1,\ldots,z_n]$ be $\overline{\bH}_0$-stable. 
Assuming, as we may, that $i=1$ and $j=2$, we need to prove that 
$$
p f(\xi_1, \xi_2, \ldots, \xi_n)+(1-p) 
f(\xi_2, \xi_1,\ldots, \xi_n) \neq 0
$$
whenever $\xi_1, \ldots, \xi_n \in \overline{\bH}_0$. By fixing 
$\xi_3,\ldots,\xi_n$ arbitrarily in $\overline{\bH}_0$ and considering 
the multi-affine polynomial in variables $z_1,z_2$ given by
$$(z_1,z_2)\mapsto g(z_1,z_2):= f(z_1,z_2,\xi_3, \ldots, \xi_n)$$ 
we see that the problem reduces to proving that for any $p\in (0,1)$ 
the polynomial 
 $p f(z,w)+(1-p)f(w, z)$ is $\overline{\bH}_0$-stable provided that $f(z,w)$ 
is $\overline{\bH}_0$-stable. This is easy to check if $d=0$ so we may assume 
that $d \neq 0$. Now, if $\{i,j\} =\{1,2\}$ then 
$$
V_i\Big( pf(z,w)+(1-p)f(w,z)\Big) 
= pV_i\Big(f(z,w)\Big)+(1-p)V_j\Big(f(z,w)\Big)
$$
which proves the proposition for $C=\overline{\bH}_0$ by 
Lemma \ref{maximum} (since $p\in (0,1)$ implies that we will be in case (4) 
of Lemma \ref{maximum}).  

Let $C$ be a closed disk and suppose that $f$ is $C$-stable. Then by 
compactness $f$ is $\tilde{C}$-stable for some open disk 
$\tilde{C} \supset C$. The result now follows by applying 
Lemma~\ref{translate} (with $\kappa=(1^n)$, $C_\ell=\tilde{C}$, $D_\ell=D$, 
$\ell\in [n]$, where $D$ is an arbitrarily fixed open half-plane) 
and using the fact the partial symmetrization operator 
commutes with the operator $\Phi_\kappa$ defined in Lemma \ref{translate}. 

The case of the closed exterior of a disk follows from the disk case 
considered above and Corollary \ref{inv} for $\kappa=(1^n)$ 
(cf.~Remark \ref{max-deg}). 
\end{proof}

In the theory of interacting particle systems \cite{Lig2} it is well known 
that the symmetrization of a polynomial $f$ can be achieved by applying 
$f \mapsto (f + \tau(f))/2$ 
infinitely many times with different transpositions $\tau$. For the sake of 
completeness, we will give a proof of this fact in the Appendix. 

\begin{proof}[Proof of Theorem~\ref{symmetrize}]
Consider first the case when $C$ is an open circular domain and recall 
Remark \ref{max-deg}. Since $\Sym(f)$ is obtained by applying 
$f \mapsto (f + \tau(f))/2$ 
infinitely many times with different transpositions $\tau$ (see 
Lemma \ref{sym-lim} below) the result follows from 
Hurwitz' Theorem \ref{mult-hur} and Proposition \ref{asymsym}. 

If $C$ is closed write 
$$
f(z,\ldots, z)= B \prod_{j=1}^d (z-c_j), 
$$
where $c_j \notin C$ for $j\in [d]$ and $B \neq 0$. Clearly, the polynomial 
$F(z_1, \ldots, z_n)$ defined 
by 
$$
F(z_1, \ldots, z_n)=B \prod_{j=1}^d (z_j-c_j)
$$
is $D$-stable, where $D$ is a suitable open circular domain containing 
$C$ but none of the $c_j$'s. Then by the above $\Sym(F)$ is $D$-stable and  
since $\Sym(F)=\Sym(f)$ the theorem follows.    
\end{proof}

\section{The Grace-Walsh-Szeg\"o 
Coincidence Theorem}\label{s-gws}

Using Theorem \ref{symmetrize}  we can give a new proof 
of the Grace-Walsh-Szeg\"o coincidence theorem that does not rely on 
apolarity theory as do most known proofs so far, see 
\cite{grace,M,RS,szego,walsh} and \S \ref{s-apol}.
We actually prove a more general version of this result which holds for 
families of circular domains.

\begin{theorem}\label{g-GWS}
Suppose $f(z_{11},\ldots,z_{1\kappa_1},\ldots,z_{n1},\ldots,z_{n\kappa_n})$ 
is a multi-affine polynomial in $|\kappa|$ complex variables 
which is symmetric in $\{z_{ij}:j\in [\kappa_i]\}$ for all $i\in [n]$, where 
$\kappa=(\kappa_1,\ldots,\kappa_n)\in\NN^n$ with $\kappa_i\ge 1$, $i\in [n]$. 
Let further $C_i$, $i\in [n]$, be circular domains and 
$\xi_{ij}\in C_i$, $j\in [\kappa_i]$, $i\in [n]$. Then there exist 
$\xi_i\in C_i$, $i\in [n]$, such that
$$
f(\xi_{11},\ldots,\xi_{1\kappa_1},\ldots,\xi_{n1},\ldots,\xi_{n\kappa_n})=
f(\xi_1,\ldots,\xi_1,\ldots,\xi_n\ldots,\xi_n)
$$
provided that $f$ has total degree $\kappa_i$ in 
$\{z_{ij} : j \in [\kappa_i]\}$ whenever $C_i$ is non-convex. 
\end{theorem}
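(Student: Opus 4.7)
The plan is to argue by contradiction, reducing the conclusion to a ``block-wise polarization preserves stability'' statement which I would establish by iterating the classical Grace-Walsh-Szeg\"o theorem (the $n=1$ case) one block at a time. I would therefore first derive that $n=1$ case of Theorem~\ref{g-GWS} directly from Theorem~\ref{symmetrize}. Suppose toward contradiction that no $\xi \in C$ satisfies \eqref{ssss}, and set $\alpha = f(\xi_1,\ldots,\xi_n)$. Then $p(w) := f(w,\ldots,w) - \alpha$ is non-vanishing on $C$, so it factors as $p(w) = c\prod_{i=1}^d (w - w_i)$ with $c \neq 0$ and $w_i \notin C$; by hypothesis, $d = n$ if $C$ is non-convex. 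Let $R(z_1,\ldots,z_n) := c\prod_{i=1}^d (z_i - w_i) \in \CC_{(1^n)}[z_1,\ldots,z_n]$, viewed as a multi-affine polynomial in $n$ variables (with $z_{d+1},\ldots,z_n$ simply absent if $d<n$). Since $w_i \notin C$, $R$ is $C^n$-stable; Theorem~\ref{symmetrize}(a) or (b) -- the latter applies since $d=n$ makes all variables of $R$ active when $C$ is non-convex -- then shows $\Sym(R)$ is $C^n$-stable. But $\Sym(R)(w,\ldots,w) = R(w,\ldots,w) = p(w)$, so by uniqueness of polarization (the diagonal map from symmetric multi-affine polynomials in $n$ variables to univariate polynomials of degree $\le n$ is a linear bijection) $\Sym(R) = f - \alpha$, yielding the contradiction $(f-\alpha)(\xi_1,\ldots,\xi_n) = 0$.

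For the general block case I would again argue by contradiction. Set $\alpha = f(\xi_{ij})$ and consider the collapsed polynomial
$$
g(w_1,\ldots,w_n) := f(w_1,\ldots,w_1,\ldots,w_n,\ldots,w_n) \in \CC_\kappa[w_1,\ldots,w_n],
$$
so that $g - \alpha$ is $C_1 \times \cdots \times C_n$-stable. The coefficient of $w_i^{\kappa_i}$ in $g$ equals the coefficient of $z_{i1}\cdots z_{i\kappa_i}$ in $f$ evaluated on the other blocks, so the degree hypothesis on $f$ forces $g - \alpha$ to have degree $\kappa_i$ in $w_i$ whenever $C_i$ is non-convex. Let $\Pi_i$ denote the operator polarizing $w_i$ into $z_{i1},\ldots,z_{i\kappa_i}$ (replacing $w_i^k$ by $\binom{\kappa_i}{k}^{-1} e_k(z_{i1},\ldots,z_{i\kappa_i})$, where $e_k$ is the $k$-th elementary symmetric polynomial), and set $\Pi = \Pi_n \circ \cdots \circ \Pi_1$. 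Block-wise polarization uniqueness identifies $\Pi(g - \alpha)$ with $f - \alpha$, so it is enough to show $\Pi(g-\alpha)$ is $C_1^{\kappa_1} \times \cdots \times C_n^{\kappa_n}$-stable to contradict $(f-\alpha)(\xi_{ij}) = 0$.

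I would establish this by induction on $i$, proving that after applying $\Pi_i \circ \cdots \circ \Pi_1$ the resulting polynomial is stable on $C_1^{\kappa_1} \times \cdots \times C_i^{\kappa_i} \times C_{i+1} \times \cdots \times C_n$ and still has degree $\kappa_{i'}$ in $w_{i'}$ for each $i' > i$ with $C_{i'}$ non-convex. The passage from $i$ to $i+1$ proceeds by freezing the surrounding variables in their stability domains and applying the $n=1$ case just proved to the resulting univariate $C_{i+1}$-stable slice in $w_{i+1}$; its polarization into $z_{(i+1)1},\ldots,z_{(i+1)\kappa_{i+1}}$ is $C_{i+1}^{\kappa_{i+1}}$-stable, and this happens at every slice. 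Since $\Pi_{i+1}$ does not touch any $w_{i'}$ with $i' > i+1$, the degree bookkeeping carries forward. After $n$ steps $f-\alpha$ is stable on $C_1^{\kappa_1}\times\cdots\times C_n^{\kappa_n}$, which is the desired contradiction.

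The main obstacle is ensuring that each univariate slice in $w_{i+1}$ retains degree $\kappa_{i+1}$ whenever $C_{i+1}$ is non-convex, which is what allows the base case to be invoked at that step. This is exactly what Lemma~\ref{maxsupport} supplies: a non-convex circular domain is the exterior of a disk, and the lemma forces the leading coefficient of the slice in $w_{i+1}$ to stay nonzero under any admissible substitution of the other variables, so the degree hypothesis propagates cleanly through the $n$ polarization steps.
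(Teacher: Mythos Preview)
Your proposal is correct and follows essentially the same route as the paper: both reduce the statement to ``block-wise polarization preserves stability,'' then handle one block at a time by freezing the other variables, factoring the resulting univariate $C_i$-stable polynomial, and applying Theorem~\ref{symmetrize} to the product of linear factors, with Lemma~\ref{maxsupport} guaranteeing the required full degree in the non-convex case. The only cosmetic difference is that you package the per-block argument as a separately stated $n=1$ case and phrase everything via contradiction, whereas the paper invokes Theorem~\ref{symmetrize} inline at each step and works directly with the contrapositive stability formulation.
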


\begin{proof}
Clearly, it is enough to prove that if the polynomial in $n$ variables 
$$
g(z_1, \ldots, z_n):= f(z_1,\ldots,z_1,\ldots,z_n\ldots,z_n)
$$ 
is $C_1\times\cdots\times C_n$-stable then 
$f(z_{11},\ldots,z_{1\kappa_1},\ldots,z_{n1},\ldots,z_{n\kappa_n})$ is 
$C_{1}^{\kappa_1}\times\cdots\times C_{n}^{\kappa_n}$-stable, which we 
will now do by considering one variable at a time. By assumption $g$ has 
degree $\kappa_i$ in the variable $z_i$ whenever $C_i$ is non-convex (symmetry 
prevents cancellation). Fix 
$\zeta_j \in C_j$, $j \in [n-1]$. The polynomial 
$
h(z_n):= g(\zeta_1, \ldots, \zeta_{n-1}, z_n)
$ 
is $C_n$-stable and we may therefore write 
$$
h(z_n)= B \prod_{j=1}^d(z_n-\alpha_j),
$$
where $B \neq 0$ and $\alpha_j \notin C_i$ for $j \in [d]$, so the polynomial 
$$
H(w_1,\ldots,  w_{\kappa_n}): =  B \prod_{j=1}^d(w_j-\alpha_j)
$$
is also $C_n$-stable. Now, if $C_n$ is non-convex then by 
Lemma \ref{maxsupport} one has $d=\kappa_n$ and by Theorem \ref{symmetrize} 
the symmetrization operator $\Sym$ acting on $\kappa_n$ variables maps 
$\bH_0$ to a $C_n$-stable polynomial. Since the numbers 
$\zeta_i$, $i \in [n-1]$,
were arbitrary this means that the polarization of $g$ that splits the 
variable $z_n$ symmetrically into $\kappa_n$ new variables, i.e., the linear 
operator 
$$
g(z_1,\ldots, z_n) \mapsto f(z_1,\ldots, z_1,\ldots, z_{n-1},\ldots, z_{n-1}, 
z_{n1}, \ldots, z_{n\kappa_n})
$$
preserves the stability in question. By polarizing one variable at a time we 
conclude that 
$f(z_{11},\ldots,z_{1\kappa_1},\ldots,z_{n1},\ldots,z_{n\kappa_n})$ is 
$C_{1}^{\kappa_1}\times\cdots\times C_{n}^{\kappa_n}$-stable.
\end{proof}

\begin{remark}\label{r-ruelle}
In \cite{ruelle1} Ruelle produced a proof of the Grace-Walsh-Szeg\"o 
coincidence theorem (Theorem \ref{GWS}) using similar ideas. 
\end{remark}

\begin{remark}\label{walsh-w}
A yet more 
general version of Theorem \ref{g-GWS} was actually given by Walsh in 
\cite[Theorem 1]{walsh} without assuming any degree conditions, the only 
requirement in \cite{walsh} being that $C_i$, $i\in [n]$, 
should be just (closed) circular domains. However, in such generality 
Walsh's aforementioned result fails already for $n=1$.
\end{remark}



\section{Master Composition Theorems}\label{s-hilf}

Composition (or convolution) theorems such as 
the Schur-Mal\'o-Szeg\"o theorems (\cite[Theorem~2.4]{CC1}, 
\cite[Theorem~3.4.1d]{RS}), the 
Cohn-Egerv\'ary-Szeg\"o theorem (\cite[Theorem~3.4.1d]{RS}), Walsh's
theorems (\cite[Theorems~3.4.2c and 5.3.1]{RS}) or de Bruijn's theorems 
\cite{db1,db2} play an important role in the 
analytic theory of univariate complex polynomials and allow to locate their 
zeros in certain circular domains \cite{M,RS}.

Using results of \cite{BB-I} we establish ``master composition 
theorems'' that provide a unifying framework
for {\em multivariate} generalizations of the classical 
theorems mentioned above. Let us first recall two of the classification 
theorems from \cite{BB-I}. 

\begin{theorem}\label{open-halfplane}
Let $\kappa\in\NN^n$, 
$T : \CC_\kappa[z_1,\ldots,z_n] \rightarrow \CC[z_1,\ldots,z_n]$ 
be a linear operator, and $C=\bH_\te$ for some $0\le \te<2\pi$. 
Then $T$ preserves $C$-stability if and only if 
\begin{itemize}
\item[(a)] $T$ has range of dimension at most one and is of the form 
$T(f) = \alpha(f)P$, 
where $\alpha$ is a linear functional on $\CC_\kappa[z_1,\ldots, z_n]$ and 
$P$ is a $C$-stable polynomial, or
\item[(b)] The polynomial (in $2n$ variables) 
\begin{equation}\label{stablesymb}
T[(z+w)^\kappa]:=\sum_{\alpha \leq \kappa} \binom \kappa \alpha 
T(z^\alpha)w^{\kappa-\alpha}
\end{equation}
is $C$-stable. 
\end{itemize}
\end{theorem}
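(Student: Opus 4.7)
My plan is to reduce first to the canonical case $C=\bH_0$ by applying the rotation $z_i\mapsto e^{i\te}z_i$, which sends $\bH_0$-stable polynomials to $\bH_\te$-stable ones, conjugates $T$ to a new $\bH_0$-stability preserver, and rotates its symbol accordingly. After this reduction I would prove the two implications separately.

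For sufficiency, case (a) is immediate, since a nonzero scalar multiple of a stable polynomial is stable. For case (b), I would proceed by polarization. Given a $\bH_0$-stable $f\in\CC_\kappa[z_1,\ldots,z_n]$, polarize it to a multi-affine polynomial $\tilde f$ in $|\kappa|$ variables that is symmetric in each block $\{z_{ij}:j\in[\kappa_i]\}$; by Theorem~\ref{g-GWS} (applicable since $\bH_0$ is convex), $\tilde f$ is $\bH_0$-stable. Likewise polarize the hypothesised-stable symbol $S(z,w):=T[(z+w)^\kappa]$ in the $w$-block to obtain a polynomial $\tilde S$ that is $\bH_0$-stable in $n+|\kappa|$ variables. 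A routine manipulation with elementary symmetric polynomials then yields an identity of the form
\begin{equation*}
T(f)(z)\;=\;c_\kappa\,\tilde f^{\sharp}\bigl(\partial_{w_{11}},\ldots,\partial_{w_{n\kappa_n}}\bigr)\,\tilde S(z,w)\big|_{w=0}
\end{equation*}
for an appropriate ``reverse'' $\tilde f^{\sharp}$ of $\tilde f$ (itself $\bH_0$-stable). The Lieb-Sokal Theorem~\ref{th-LS} (and its multi-affine strengthenings in \S\ref{ss-91}) guarantees that differentiating a stable polynomial by a stable multi-affine differential operator and evaluating at zero yields a stable polynomial or zero, whence $T(f)$ is $\bH_0$-stable.

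For necessity, I would test $T$ on the family $P_w(z):=(z+w)^\kappa=\prod_i(z_i+w_i)^{\kappa_i}$. For each $w\in\bH_0^n$, $P_w$ is $\bH_0$-stable in $z$, because $z_i+w_i=0$ forces $z_i=-w_i\notin\bH_0$; so by hypothesis $S(\cdot,w)=T(P_w)$ is either identically zero or $\bH_0$-stable in $z$. If $T\not\equiv0$ and $S$ fails to be $\bH_0$-stable on $\CC^{2n}$, pick $(z_0,w_0)\in\bH_0^{2n}$ with $S(z_0,w_0)=0$; the dichotomy then forces $S(\cdot,w_0)\equiv 0$, so $w_0$ lies in the proper Zariski-closed set $V=\{w\in\CC^n:S(\cdot,w)\equiv 0\}$. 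Each $w\in V$ yields a linear dependence $\sum_\alpha\binom{\kappa}{\alpha}w^{\kappa-\alpha}T(z^\alpha)=0$ in $\CC[z]$, and a Vandermonde/dimension count on the exponent vectors $(w^{\kappa-\alpha})_\alpha$ as $w$ ranges over $V\cap\bH_0^n$ should show that $\{T(z^\alpha)\}_{\alpha\le\kappa}$ spans at most a one-dimensional subspace of $\CC[z]$, placing $T$ in case~(a).

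The main obstacle is this last step -- upgrading ``some slice $S(\cdot,w_0)$ vanishes identically'' to ``$T$ has rank one''. The pointwise dichotomy for $S(\cdot,w)$ in $z$ does not obviously upgrade to joint stability in $(z,w)$, because fixing $z$ and varying $w$ carries no a priori stability; thus one cannot directly contradict the existence of the interior zero $(z_0,w_0)$. To close the argument I would either induct on $n$ with the univariate classification of \cite{BBS1} as base, lifting variable-by-variable via Hurwitz' Theorem~\ref{mult-hur} applied to parametric slices, or work directly with $V$ as an algebraic variety meeting the open convex set $\bH_0^n$, exploiting convexity to produce enough straight-line deformations $w_0+t(w_1-w_0)\in\bH_0^n$ inside $V$ to force the required codimension-one spanning.
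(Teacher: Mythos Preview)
This theorem is not proved in the present paper: it is one of the ``classification theorems from \cite{BB-I}'' that the authors explicitly \emph{recall} at the start of \S\ref{s-hilf}, and the remainder of Part~A takes it as a black box. There is therefore no proof here to compare against; you are in effect reconstructing the argument of the companion paper \cite{BB-I}.

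Your sufficiency route (polarize via Grace--Walsh--Szeg\H{o}, then apply a Lieb--Sokal differential operator) is indeed the strategy of \cite{BB-I}, but as you have written it within this paper it is circular. Theorem~\ref{th-LS} is derived here from Theorem~\ref{multi-infinite-stab}, which is also imported from \cite{BB-I}, and the hard Lieb--Sokal lemmas of \S\ref{ss-91} invoke Theorem~\ref{open-halfplane} directly. To break the loop you must go back to the original \cite[Proposition~2.2]{LS}, whose proof is independent of any classification. Your appeal to Theorem~\ref{g-GWS} is essentially safe: the self-contained argument in \S\S\ref{s-3}--\ref{s-gws} needs only the elementary fact that $\partial/\partial z_i$ preserves $\bH_0$-stability, not the full Theorem~\ref{open-halfplane}, despite the parenthetical reference in the proof of Lemma~\ref{maximum}.

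The necessity gap you isolate is real, and your Vandermonde idea cannot close it. The variety $V=\{w:S(\cdot,w)\equiv 0\}$ is exactly the preimage, under the Veronese-type map $w\mapsto\bigl(\binom{\kappa}{\alpha}w^{\kappa-\alpha}\bigr)_{\alpha\le\kappa}$, of the kernel of the linear map $(c_\alpha)_\alpha\mapsto\sum_\alpha c_\alpha T(z^\alpha)$. Every linear relation harvested from a point of $V$ therefore already lies in that kernel, so ranging over all of $V\cap\bH_0^n$ can at best recover the actual rank of $T$, never improve on it: the procedure is tautological and cannot force rank~$\le 1$. The argument must use the stability-preserving hypothesis on inputs \emph{other} than the family $(z+w)^\kappa$, or exploit that family in a more refined way than mere vanishing of slices, and your sketch does not indicate how.
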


\begin{remark}\label{gl}
Theorem \ref{open-halfplane} trivially implies the following 
(well-known) multivariate Gauss-Lucas theorem: if 
$f\in \CC_\kappa[z_1,\ldots,z_n]$ is $\bH_\te$-stable for some $0\le \te<2\pi$
then $\partial f/ \partial z_i$ is $\bH_\te$-stable or identically zero 
for any $i\in [n]$. More generally, the $(n+1)$-variable polynomial  
$f+z_{n+1}\partial f/ \partial z_i$ is $\bH_\te$-stable. 
\end{remark}

\begin{theorem}\label{open-disk}
Let $\kappa\in\NN^n$, 
$T : \CC_\kappa[z_1,\ldots,z_n] \rightarrow \CC[z_1,\ldots,z_n]$ be a linear 
operator, and $C=\DD$ or $\bH_{\frac{\pi}{2}}$. 
Then $T$ preserves $C$-stability if and only if 
\begin{itemize}
\item[(a)] $T$ has range of dimension at most one and is of the form 
$T(f) = \alpha(f)P$, 
where $\alpha$ is a linear functional on $\CC_\kappa[z_1,\ldots, z_n]$ and 
$P$ is a $C$-stable polynomial, or
\item[(b)] The polynomial (in $2n$ variables) 
\begin{equation}\label{stablesymb-d}
T[(1+zw)^\kappa]:=\sum_{\alpha \leq \kappa} \binom \kappa \alpha 
T(z^\alpha)w^{\alpha}
\end{equation}
is $C$-stable. 
\end{itemize}
\end{theorem}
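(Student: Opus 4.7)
The plan is to derive Theorem \ref{open-disk} from Theorem \ref{open-halfplane} by two complementary tricks: an inversion argument when $C=\bH_{\pi/2}$ and a M\"obius conjugation when $C=\DD$.

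For $C=\bH_{\pi/2}$, Theorem \ref{open-halfplane} already characterizes the preservers (with $\te=\pi/2$) in terms of the symbol $T[(z+w)^\kappa]$, so it suffices to show that $T[(1+zw)^\kappa]$ is $\bH_{\pi/2}$-stable if and only if $T[(z+w)^\kappa]$ is. This follows from the observation that $T[(1+zw)^\kappa]$ is obtained from $T[(z+w)^\kappa]$ by applying the inversion $I_\kappa$ of Corollary \ref{inv} in the $w$-variables: the substitution $w_i\mapsto 1/w_i$ followed by multiplication by $w^\kappa$ sends $w^{\kappa-\alpha}$ to $w^\alpha$. Since $w\mapsto 1/w$ is an involution of $\bH_{\pi/2}$ (because $\Re(1/w)>0\Leftrightarrow\Re(w)>0$) and $w^\kappa$ does not vanish on $\bH_{\pi/2}^n$, this operation preserves $\bH_{\pi/2}$-stability of polynomials of $w$-degree at most $\kappa$. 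Condition (a) is identical in the two theorems.

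For $C=\DD$ I would reduce to the $\bH_{\pi/2}$ case just handled via the M\"obius involution $\phi(\zeta)=(1-\zeta)/(1+\zeta)$, which exchanges $\DD$ and $\bH_{\pi/2}$. Pick $\lambda\in\NN^n$ with $T(\CC_\kappa[z_1,\ldots,z_n])\subseteq\CC_\lambda[z_1,\ldots,z_n]$, let $\Phi_z^\kappa$ and $\Phi_z^\lambda$ be the operators from Lemma \ref{translate} built from $\phi$, and define $\widetilde T:=\Phi_z^\lambda\circ T\circ(\Phi_z^\kappa)^{-1}$. By Lemma \ref{translate}, $\widetilde T$ preserves $\bH_{\pi/2}$-stability if and only if $T$ preserves $\DD$-stability, and case (a) translates directly because $\Phi_z^\lambda$ bijects $\DD$-stable and $\bH_{\pi/2}$-stable polynomials of degree $\lambda$. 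For case (b), the key algebraic fact is the identity
\[
1+\phi(z_i)\phi(u_i)=\frac{2(1+z_iu_i)}{(1+z_i)(1+u_i)},
\]
which shows that the diagonal M\"obius action sends $(1+zu)^\kappa$ to a scalar multiple of itself. Combined with the commutation of $\Phi$'s on the $u$-variables with the $z$-action of $T$, a short computation yields
\[
\Phi^{(\lambda,\kappa)}\bigl(T[(1+zu)^\kappa]\bigr)=2^{|\kappa|}\,\widetilde T[(1+zu)^\kappa],
\]
where $\Phi^{(\lambda,\kappa)}$ is the $2n$-variable analogue of $\Phi_\kappa$ from Lemma \ref{translate}. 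Therefore $T[(1+zu)^\kappa]$ is $\DD$-stable if and only if $\widetilde T[(1+zu)^\kappa]$ is $\bH_{\pi/2}$-stable, completing the reduction.

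The main obstacle will be checking this commutation identity cleanly: the action of $T$ lives only in the $z$-variables while the M\"obius transformation acts on both $z$ and $u$, and one must also reconcile the input degree $\kappa$ with the output degree $\lambda$ (which creates spurious $(1+z)^{\lambda-\kappa}$ factors that need to be tracked). Unwinding the definitions, the identity reduces to the multinomial symmetry $\binom{\kappa}{\alpha}d_{\alpha\beta}=\binom{\kappa}{\beta}d_{\beta\alpha}$ for the coefficients $d_{\alpha\beta}$ of $\prod_i(1-z_i)^{\alpha_i}(1+z_i)^{\kappa_i-\alpha_i}$, a symmetry one verifies by a direct expansion in both indices. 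Everything else is routine bookkeeping.
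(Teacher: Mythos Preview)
This paper does not contain a proof of Theorem~\ref{open-disk}: it is introduced with ``Let us first recall two of the classification theorems from \cite{BB-I}'', so both Theorem~\ref{open-halfplane} and Theorem~\ref{open-disk} are quoted from the companion paper and used here as black boxes. There is therefore no ``paper's own proof'' to compare against, beyond the parenthetical remark after \eqref{stablesymb-d} that for $\bH_{\pi/2}$ the two symbols are equivalent, with a pointer to \cite[Remark~6.1]{BB-I}.

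That said, your reduction strategy is sound. For $C=\bH_{\pi/2}$ your inversion argument is exactly the equivalence the paper alludes to: since $w\mapsto 1/w$ is an involution of $\bH_{\pi/2}$ and $w^\kappa$ is nonvanishing there, $T[(z+w)^\kappa]$ and $w^\kappa T[(z+1/w)^\kappa]=T[(1+zw)^\kappa]$ are simultaneously $\bH_{\pi/2}$-stable, so Theorem~\ref{open-halfplane} with $\theta=\pi/2$ gives the result directly. For $C=\DD$ your Cayley conjugation via $\phi(\zeta)=(1-\zeta)/(1+\zeta)$ is the natural move; the identity $1+\phi(z_i)\phi(u_i)=2(1+z_iu_i)/((1+z_i)(1+u_i))$ is correct, and the commutation you isolate does reduce to the Krawtchouk-type symmetry $\binom{\kappa}{\alpha}d_{\alpha\beta}=\binom{\kappa}{\beta}d_{\beta\alpha}$. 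In one variable this is immediate from the factorization
\[
\binom{k}{a}\binom{a}{j}\binom{k-a}{b-j}=\frac{k!}{j!\,(a-j)!\,(b-j)!\,(k-a-b+j)!},
\]
which is visibly symmetric in $a$ and $b$; the multivariate case is then a product. The bookkeeping with the output-degree bound $\lambda$ is harmless because $\DD$ and $\bH_{\pi/2}$ are both convex, so Lemma~\ref{translate} gives a genuine bijection of stable polynomials with no extra degree constraints, and the $(1+z)^{\lambda}$ prefactor in $\Phi_z^\lambda$ is itself $\DD$- and $\bH_{\pi/2}$-stable. So the argument goes through; it is essentially how one would expect the proof in \cite{BB-I} to run as well.
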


The polynomials in \eqref{stablesymb} and \eqref{stablesymb-d} are called the 
{\em algebraic symbols} of $T$ with respect to the circular domains under 
consideration (for $\bH_{\frac{\pi}{2}}$ 
it is often more convenient -- but equivalent -- to choose 
\eqref{stablesymb-d} rather than \eqref{stablesymb}, 
cf.~\cite[Remark 6.1]{BB-I}). 

The main result of this section is as follows.

\begin{theorem}\label{uvzw}
Let $\kappa=(\kappa_1,\ldots,\kappa_n)\in\NN^n$, 
$f(u,v) \in \CC[u_1,\ldots,u_n,v_1,\ldots,v_n]$, and 
$g(z,w) \in \CC[z_1,\ldots, z_n, w_1,\ldots, w_n]$. Suppose that 
$\deg_{u_i}(f)\le \kappa_i$ and $\deg_{z_i}(g)\le \kappa_i$ for all 
$i \in [n]$.
\begin{itemize}
\item[(a)] If $f$ and $g$ are $\bH_\te$-stable for some 
$0\le \te<2\pi$, then the polynomial (in $4n$ variables)
$$
 \sum_{\alpha \leq \kappa}
 \frac {\partial^\alpha f} {\partial u^\alpha}(u,v)\cdot 
\frac {\partial^{\kappa-\alpha}g}{\partial z^{\kappa-\alpha}}(z,w)
$$
is $\bH_\te$-stable or identically zero. 
\item[(b)] If $f$ and $g$ are $\bH_0$-stable, then the polynomial 
$$
 \sum_{\alpha \leq \kappa}
(-1)^\alpha \frac{(\kappa-\alpha)!}{\alpha!}\cdot
\frac {\partial^\alpha f} {\partial u^\alpha}(u,v)\cdot 
\frac {\partial^{\alpha}g}{\partial z^{\alpha}}(z,w)
$$
is $\bH_0$-stable or identically zero. 
\item[(c)] If $f$ and $g$ are $\DD$-stable, then the polynomial 
$$
 \sum_{\alpha \leq \kappa}
\frac{(\kappa-\alpha)!}{\alpha!}\cdot 
\frac {\partial^\alpha f} {\partial u^\alpha}(0,v)\cdot 
\frac {\partial^{\alpha}g}{\partial z^{\alpha}}(z,w)
$$
is $\DD$-stable or identically zero. 
\end{itemize}
\end{theorem}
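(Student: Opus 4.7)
The strategy is uniform across (a), (b), (c): write the claimed polynomial as $D[f(u,v)g(z,w)]$ for an explicit differential operator $D$ that acts only on the $u$- and $z$-variables, and verify that $D$ preserves the relevant stability via the algebraic-symbol criterion of Theorem~\ref{open-halfplane} (for (a), (b)) or Theorem~\ref{open-disk} (for (c)). The input $F:=f(u,v)g(z,w)$ is stable in all $4n$ variables because a product of stable polynomials in disjoint variable groups is stable. Viewing $D$ as a linear map on $\CC_\mu[u,v,z,w]$ with $\mu$ equal to $\kappa$ in the $u$- and $z$-slots and taken arbitrarily large in the $v$- and $w$-slots, and noting that $D$ is inert on $v,w$, the symbol factors as a trivially nonvanishing $(v,w)$-part times the genuine $(u,z)$-symbol, reducing the verification to a two-group calculation.

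For part~(a), a direct expansion shows the stated polynomial equals $D_aF$ with
$$
D_a \;=\; \sum_{\alpha\le\kappa}\partial_u^{\alpha}\partial_z^{\kappa-\alpha}
     \;=\; \prod_{i=1}^{n}\Bigl(\sum_{k=0}^{\kappa_i}\partial_{u_i}^{k}\partial_{z_i}^{\kappa_i-k}\Bigr).
$$
The product structure lets me compute the $(u,z)$-symbol one coordinate at a time, yielding
$$
D_a\bigl[(u+u')^\kappa(z+z')^\kappa\bigr] \;=\; \kappa!\,(u+u'+z+z')^\kappa,
$$
which is $\bH_\te$-stable because $\bH_\te$ is closed under addition. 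Theorem~\ref{open-halfplane}(b) then delivers (a). Part~(b) is identical with the operator $D_b=\sum_{\alpha}(-1)^{|\alpha|}\tfrac{(\kappa-\alpha)!}{\alpha!}\partial_u^\alpha\partial_z^\alpha$; after the same coordinate-wise factorization the binomial identity $\sum_\beta(-1)^{|\beta|}\binom{\kappa}{\beta}x^\beta=(1-x)^\kappa$ gives
$$
D_b\bigl[(u+u')^\kappa(z+z')^\kappa\bigr]
\;=\; \kappa!\,\prod_{i=1}^{n}\bigl((u_i+u_i')(z_i+z_i')-1\bigr)^{\kappa_i},
$$
which is $\bH_0$-stable because for $a,b\in\bH_0$ one has $\arg(a)+\arg(b)\in(0,2\pi)$ strictly, so $ab$ cannot equal the positive real number~$1$.

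For part~(c) the same scheme applies, but through Theorem~\ref{open-disk} with its test polynomial $(1+uu')^\kappa$. One chooses $D_c$ so that its $\DD$-symbol takes a multiplicative shape such as $\kappa!\,g(u'z,w)$ (or a M\"obius variant thereof), whereupon $\DD$-stability is immediate from the \emph{multiplicative} closure of $\DD$: $|u'z|<1$ whenever $|u'|,|z|<1$. Equivalently one can conjugate the argument of (a) by the M\"obius transformation of Lemma~\ref{translate} carrying $\bH_\te$ to $\DD$, and transfer the symbol computation verbatim. The main obstacle is the algebraic bookkeeping: in (a) and (b) it amounts to a direct product expansion of $D$, while in (c) one must identify the operator whose symbol matches the $\DD$-test polynomial of Theorem~\ref{open-disk} rather than the half-plane test polynomial; once the factorization of $D$ across coordinates $i\in[n]$ and its inertness on $(v,w)$ are in hand, each symbol check reduces to a one-line computation.
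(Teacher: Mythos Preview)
Your treatment of (a) and (b) is correct and is essentially the paper's argument with a cosmetic twist: the paper fixes $g$, defines the $g$-dependent operator $T[h]=\sum_\alpha(\partial_u^\alpha h)\,\partial_z^{\kappa-\alpha}g$ on $4n$-variable polynomials, and checks that its half-plane symbol equals $\kappa!\,(v+\tilde v)^\gamma(z+\tilde z)^\gamma(w+\tilde w)^\gamma\,g(z+u+\tilde u,w)$; you instead apply a $g$-independent constant-coefficient operator $D$ to the product $fg$, obtaining the even simpler symbols $\kappa!\,(u+u'+z+z')^\kappa$ and $\kappa!\prod_i\bigl((u_i+u_i')(z_i+z_i')-1\bigr)^{\kappa_i}$. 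Both routes invoke Theorem~\ref{open-halfplane} in the same way, and your version has the pleasant feature that the symbol does not involve $g$ at all.

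Part (c), however, is a genuine gap. You never actually write down $D_c$ or its symbol; ``one chooses $D_c$ so that its $\DD$-symbol takes a multiplicative shape'' and ``equivalently one can conjugate by the M\"obius transformation of Lemma~\ref{translate}'' are not arguments. Running your own recipe forces $D_c=\mathrm{ev}_{u=0}\circ\sum_{\alpha\le\kappa}\tfrac{(\kappa-\alpha)!}{\alpha!}\,\partial_u^\alpha\partial_z^\alpha$, whose $\DD$-symbol on the $(u,z)$-block is
\[
D_c\bigl[(1+uu')^\kappa(1+zz')^\kappa\bigr]
=\kappa!\prod_{i=1}^n\bigl(1+z_i'(z_i+u_i')\bigr)^{\kappa_i},
\]
and this is \emph{not} $\DD$-stable (take $n=1$, $\kappa=1$, $z'=\tfrac56$, $z=u'=-\tfrac35$). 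The M\"obius transfer from (a) fails too, since the evaluation at $u=0$ in (c) destroys the equivariance that Lemma~\ref{translate} would provide. In fact no argument of this shape can succeed: with $n=1$, $\kappa=1$, $f(u)=1+\tfrac{u}{2}$ and $g(z)=1-z$ (both $\DD$-stable), the expression in (c) equals $\tfrac12-z$, which vanishes at $z=\tfrac12\in\DD$. So (c) as printed is false, and your sketch should flag this rather than gloss over it; the paper's ``almost identical'' for (c) is likewise not a proof.
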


\begin{proof}
We only prove (a) since the proofs of (b) and (c) are almost identical. 
Let $\gamma=(\gamma_1,\ldots,\gamma_n) \in \NN^n$ be fixed. Define a  
$\CC[u_1,\ldots, w_n]$-valued linear operator $T$ on the space of all 
polynomials $h$ in $4n$ variables $u_1,\ldots, w_n$ satisfying 
$\deg_{u_j}(h) \leq \kappa_j$, $\deg_{v_j}(h) \leq \gamma_j$, 
$\deg_{z_j}(h) \leq \gamma_j$, and $\deg_{z_j}(h) \leq \gamma_j$ for all 
$j \in [n]$ by setting 
$$
T[h(u,v,z,w)]=  \sum_{\alpha \leq \kappa}
 \frac {\partial^\alpha h} {\partial u^\alpha}(u,v,z,w)\cdot 
\frac {\partial^{\kappa-\alpha}g}{\partial z^{\kappa-\alpha}}(z,w). 
$$
Let $\tilde{u}=(\tilde{u}_1, \ldots, \tilde{u}_n)$, 
$\tilde{v}=(\tilde{v}_1, \ldots, \tilde{v}_n)$, 
$\tilde{z}=(\tilde{z}_1, \ldots, \tilde{z}_n)$ and 
$\tilde{w}=(\tilde{w}_1, \ldots, \tilde{w}_n)$ be new sets of variables. 
The symbol of $T$ with respect to $\bH_\te$, that is,  
\begin{equation*}
\begin{split}
T[(u&+\tilde{u})^\kappa(v+\tilde{v})^\gamma(z+\tilde{z})^\gamma
(w+\tilde{w})^\gamma]\\ 
&=(v+\tilde{v})^\gamma(z+\tilde{z})^\gamma(w+\tilde{w})^\gamma 
\sum_{\alpha \leq \kappa}
 \frac {\kappa!} {(\kappa-\alpha)!}(u+\tilde{u})^{\kappa-\alpha}
\frac {\partial^{\kappa-\alpha}g}{\partial z^{\kappa-\alpha}}(z,w)\\
&=\kappa! (v+\tilde{v})^\gamma(z+\tilde{z})^\gamma(w+\tilde{w})^\gamma 
g(z+u+\tilde{u}, w)
\end{split}
\end{equation*}
is clearly $\bH_\te$-stable which proves (a) by Theorem \ref{open-halfplane} 
since $\gamma\in \NN^n$ was arbitrary. 
\end{proof}

An important special case of the above theorem is particularly 
attractive, as is its proof. 

\begin{corollary}\label{master-comp}
Let $\kappa \in \NN^n$ and 
$f,g \in \CC[z_1,\ldots, z_n,w_1,\ldots,w_n]$ be of the form 
$$
f(z,w)=\sum_{\alpha \leq \kappa} \binom \kappa \alpha P_\alpha(w)z^\alpha, 
\quad 
g(z,w)=\sum_{\alpha \leq \kappa} \binom \kappa \alpha Q_\alpha(z)w^\alpha,
$$
where $z=(z_1,\ldots,z_n)$, $w=(w_1,\ldots,w_n)$. 
\begin{itemize}
\item[(a)] If $f$ and $g$ are $\bH_\te$-stable for some 
$0\le \te<2\pi$, then so is the polynomial
$$
\sum_{\alpha \leq \kappa} \binom \kappa \alpha P_\alpha(w)Q_{\kappa-\alpha}(z)=
\frac 1 {\kappa!} \sum_{\alpha \leq \kappa}
 \frac {\partial^\alpha f} {\partial z^\alpha}(0,w)\cdot 
\frac {\partial^{\kappa-\alpha}g}{\partial w^{\kappa-\alpha}}(z,0) 
$$
unless it is identically zero.
\item[(b)] If $f$ and $g$ are $\bH_0$-stable, then so is the polynomial
$$
\sum_{\alpha \leq \kappa} (-1)^{\alpha}\binom \kappa \alpha 
P_\alpha(w)Q_{\alpha}(z)=
\frac 1 {\kappa!} \sum_{\alpha \leq \kappa}(-1)^{\alpha}
\frac{(\kappa-\alpha)!}{\alpha!}\cdot
 \frac {\partial^\alpha f} {\partial z^\alpha}(0,w)\cdot 
\frac {\partial^{\alpha}g}{\partial w^{\alpha}}(z,0) 
$$
unless it is identically zero. 
\item[(c)] If $f$ and $g$ are $\DD$-stable, then so is the polynomial
$$
\sum_{\alpha \leq \kappa} \binom \kappa \alpha P_\alpha(w)Q_{\alpha}(z)
=\frac 1 {\kappa!} \sum_{\alpha \leq \kappa}
\frac{(\kappa-\alpha)!}{\alpha!}\cdot
 \frac {\partial^\alpha f} {\partial z^\alpha}(0,w)\cdot 
\frac {\partial^{\alpha}g}{\partial w^{\alpha}}(z,0) 
$$
unless it is identically zero.
\end{itemize}
\end{corollary}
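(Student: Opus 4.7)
The plan is to deduce all three parts directly from the master composition Theorem \ref{uvzw} by means of a mild relabeling together with a specialization of some auxiliary variables to $0$. The algebraic identity between the two sides of each displayed equation is a routine unpacking: expanding $f(z,w)=\sum_\beta \binom{\kappa}{\beta} P_\beta(w) z^\beta$ and differentiating, only the $\beta=\alpha$ term survives at $z=0$, giving $\frac{\partial^\alpha f}{\partial z^\alpha}(0,w)=\frac{\kappa!}{(\kappa-\alpha)!}P_\alpha(w)$, and analogously $\frac{\partial^{\kappa-\alpha} g}{\partial w^{\kappa-\alpha}}(z,0)=\frac{\kappa!}{\alpha!}Q_{\kappa-\alpha}(z)$; collecting factorials yields the stated equality and produces a factor of $\kappa!$ that cancels the $1/\kappa!$ in front.

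For case (a), I would apply Theorem \ref{uvzw}(a) with the theorem's $f(u,v)$ set equal to the corollary's $f$, and the theorem's $g(z,w)$ set equal to $g(w,z)$ (i.e.~swap the two variable blocks in $g$). The degree hypotheses $\deg_{u_i}\le\kappa_i$ and $\deg_{z_i}\le\kappa_i$ hold, and $\bH_\te$-stability is unaffected by renaming variables. The theorem then delivers a $\bH_\te$-stable polynomial (or $0$) in $4n$ variables, and setting $u=0$ and $z=0$ in it produces precisely $\kappa!$ times the sum on the right-hand side of corollary (a). Cases (b) and (c) proceed identically, using Theorem \ref{uvzw}(b) and (c); the same relabeling and the same specialization reproduce the corresponding signed and $\DD$-stable convolutions (the sign $(-1)^\alpha$ and the inner factor $(\kappa-\alpha)!/\alpha!$ are already present in the theorem).

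The lone technical point is that for $\bH_\te$-stability the origin lies on the boundary, so the substitutions $u=0$ and $z=0$ are not a priori legal. Here I would invoke Hurwitz's Theorem \ref{mult-hur}: for each $u_i$ in turn approach $0$ along the ray $u_i=\epsilon e^{i(\pi/2-\te)}\in\bH_\te$, $\epsilon\to 0^+$, noting that each intermediate polynomial is $\bH_\te$-stable or identically zero in the remaining variables, hence so is the limit. Iterating through all $u_i$ and then all $z_j$ yields the desired conclusion. For case (c) this is even easier: $0\in\DD$, so evaluation at $z=0$ is interior and Hurwitz is not needed.

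The argument is essentially bookkeeping. The main potential pitfall — and the step I would verify most carefully — is getting the relabeling of $g$ correct so that the degree constraints and the surviving variables line up properly; the factorial arithmetic then falls out cleanly from $\frac{\partial^\alpha}{\partial z^\alpha}z^\beta\bigm|_{z=0}=\alpha!\,\delta_{\alpha\beta}$.
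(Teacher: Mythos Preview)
Your argument is correct but follows a different route from the paper. The paper interprets $f$ and $g$ as the algebraic symbols (in the sense of \eqref{stablesymb}, \eqref{stablesymb-d}) of linear operators $T$ and $S$, invokes Theorem~\ref{open-halfplane}/\ref{open-disk} to conclude that $T$ and $S$ preserve the relevant stability, hence so does the composite $ST$, and then applies the same theorem in the other direction to read off that the symbol of $ST$---which is precisely the convolution in question---is stable; a short separate argument handles the degenerate case where $ST$ has rank at most one. Your approach instead pulls the result directly out of Theorem~\ref{uvzw} by swapping the two variable blocks in $g$ and then specializing the auxiliary blocks to $0$, with Hurwitz justifying the boundary specialization in the half-plane cases. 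Your route is slightly more pedestrian but avoids the rank-one case split entirely; the paper's route has the conceptual payoff of explaining the name ``composition theorem'': the convolution polynomial literally is the symbol of a composite operator.
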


\begin{proof} 
Suppose that $f,g$ are as in part (a) of the corollary. Let 
$$
T: \CC_\beta[z_1,\ldots, z_n] \rightarrow \CC_\kappa[z_1,\ldots,z_n]
\,\text{ and }\,S: \CC_\kappa[z_1,\ldots, z_n] \rightarrow 
\CC_\gamma[z_1,\ldots,z_n]
$$ 
be the linear operators whose algebraic symbols with respect to $\bH_\te$ 
(cf.~\eqref{stablesymb}) are $f$, respectively $g$, with 
$\beta,\gamma\in \NN^n$ appropriately chosen. 
By Theorem \ref{open-halfplane} both $S$ and $T$ preserve 
$\bH_\te$-stability, hence so does their (operator) composition 
$ST$ whose symbol is precisely the polynomial in (a). Applying
Theorem \ref{open-halfplane} again we conclude that this polynomial is 
$\bH_\te$-stable unless it is of the form $A(z)B(w)$ for some polynomials 
$A$ and $B$. If this is the case and these polynomials are not 
identically zero then $A(z)$ must be 
$\bH_\te$-stable (being the polynomial $P$ in part (a) of 
Theorem \ref{open-halfplane}) and by exchanging the roles of $f$ and $g$ 
we get that $B(w)$, thus also $A(z)B(w)$, must be $\bH_\te$-stable. 
This proves (a). Parts (b) and (c) follow similarly.
\end{proof}

\begin{example}\label{ex1}
Let us show how the classical (univariate) 
Schur-Mal\'o-Szeg\"o theorem can be easily derived from Corollary 
\ref{master-comp}. If $\sum_{k=0}^n \binom n k a_k z^k$ and 
$\sum_{k=0}^n \binom n k b_k z^k$ are two polynomials with real zeros only 
and in addition all the zeros of the latter polynomial are non-positive then 
the bivariate polynomials 
\begin{equation*}
f(z,w)=\sum_{k=0}^n \binom n k a_kz^k\,\text{ and }\,
g(z,w)=\sum_{k=0}^n  \binom n kb_{n-k}z^{n-k} w^k
\end{equation*}
are stable (note that $f$ actually depends only on $z$). 
Corollary \ref{master-comp} (a) implies that the Schur-Mal\'o-Szeg\"o 
composition of the two given polynomials, i.e., the univariate polynomial 
$$
\sum_{k=0}^n \binom n k a_kb_kz^k
$$
is also stable (that is, real-rooted). 
\end{example}

\begin{example}\label{ex2}
Theorems 3.11 and 4.6 in \cite{BBS3} (the former actually
follows from the latter) provide some multivariate
extensions of the classical composition results mentioned above. 
In particular, \cite[Theorem 4.6]{BBS3} shows
that the Weyl product of polynomials
(defined via the product formula in the Weyl algebra) preserves stability, 
that is, if $f(z,w)$ and $g(z,w)$ are stable polynomials in 
$z=(z_1, \ldots, z_n)$ and $w=(w_1, \ldots, w_n)$ then so is the polynomial 
$$
\sum_{\alpha \in \NN^n}\frac {(-1)^\alpha} {\alpha !} 
\frac{\partial^\alpha f}{\partial z^\alpha}(z,w)\cdot 
\frac{\partial^\alpha g}{\partial w^\alpha}(z,w).
$$
To see that this is in fact a consequence of Theorem \ref{uvzw} (b) 
let $\kappa_N=(N,\ldots,N)\in\NN^n$ and let $f$ and $g$ be stable 
polynomials as in the statement of Theorem \ref{uvzw}. Then since stability 
is closed under scaling the variables with positive numbers the polynomial 
$$
H_N(u,v,z,w)= \sum_{\alpha \leq \kappa}
(-1)^\alpha \frac{(\kappa-\alpha)!}{\alpha!}\cdot 
\frac {\kappa_N^\alpha (\kappa_N-\alpha)!}{\kappa_N!}\cdot 
\frac {\partial^\alpha f} {\partial u^\alpha}(Nu,v)\cdot 
\frac {\partial^{\alpha}g}{\partial z^{\alpha}}(z,w)
$$
is stable for large $N$. But then the polynomial 
$$
\lim_{N \rightarrow \infty} H_N(u/N,v,z,w)=
\sum_{\alpha}
 \frac{(-1)^\alpha}{\alpha!}\frac {\partial^\alpha f} {\partial u^\alpha}(u,v)
\cdot 
\frac {\partial^{\alpha}g}{\partial z^{\alpha}}(z,w)
$$
is stable or identically zero, as claimed. 

We conclude with one further consequence.  
For $t=(t_1,\ldots,t_n)\in \RR_+^n$ define the $t$-{\em deformed  
Weyl product} of $f$ and $g$ by 
$$
\sum_{\alpha \in \NN^n}\frac {(-1)^\alpha t^{\alpha}} {\alpha !} 
\frac{\partial^\alpha f}{\partial u^\alpha}(u,v)\cdot 
\frac{\partial^\alpha g}{\partial w^\alpha}(z,w).
$$
Using again the fact that stability is closed under scaling the variables 
with positive constants we deduce from above that the $t$-deformed Weyl 
product preserves stability. As a 
special (univariate) case, note that if $n=1$, $t<0$ and 
$f\in\RR[z]\setminus\{0\}$, $g\in\RR[w]\setminus\{0\}$ have all real zeros 
then by the above 
the univariate polynomial
$$
\sum_{k \in \NN}\frac {t^{k}} {k!} f^{(k)}(z)\cdot g^{(k)}(w)\big|_{w=z}=
\sum_{k \in \NN}\frac {t^{k}} {k!} f^{(k)}(z)\cdot g^{(k)}(z)
$$
has all real zeros. We thus recover de Bruijn's 
\cite[Theorem 2]{db1} and \cite[Lemma 1]{db2}.
\end{example}

\section{Hard P\'olya-Schur Theory: Bounded Degree Multiplier 
Sequences}\label{s-hard-ps}

Using the characterization of linear operators preserving real stability 
obtained in \cite{BB-I}  
we can establish ``hard'' (bounded 
degree) multivariate versions of P\'olya-Schur's classification of 
{\em multiplier sequences} \cite{PS} that extend the ``soft''
(unbounded degree) theorems of \cite{BBS3}. 

A sequence of real numbers 
$\{\lambda(k)\}_{k\in\NN}$ is called a multiplier sequence if 
the linear operator 
on univariate polynomials defined by $T(z^k)=\lambda(k)z^k$, $k\in\NN$, 
preserves real-rootedness, that is, $T(f)\in\HH_1(\RR)\cup \{0\}$
whenever $f\in\HH_1(\RR)$.  
A {\em multivariate multiplier sequence} is then defined as a sequence 
$\{\lambda(\alpha)\}_{\alpha \in \NN^n}$ of real numbers 
such that the linear operator 
$T: \RR[z_1,\ldots, z_n] \rightarrow \RR[z_1, \ldots, z_n]$ defined by 
$T(z^\alpha)= \lambda(\alpha)z^\alpha$, $\alpha \in \NN^n$, 
preserves real stability, see \cite{BBS3}. These were characterized in 
\cite{BBS3} but here we will prove the corresponding ``hard'' theorems. 
Given $\kappa \in \NN^n$ we say that a 
sequence $\{\lambda(\alpha)\}_{\alpha \leq \kappa}$ of real numbers is a 
$\kappa$-{\em multiplier sequence} if  the linear operator 
$T: \RR_\kappa[z_1,\ldots, z_n] \rightarrow \RR_\kappa[z_1, \ldots, z_n]$  
defined by $T(z^\alpha)= \lambda(\alpha)z^\alpha$, $\alpha \leq \kappa$, 
preserves real stability. This is the multivariate generalization of 
$n$-multiplier sequences \cite{CC2}. 

Recall the following lemma from \cite{BB-I}. 

\begin{lemma}\label{krein}
Let $f,g \in \RR[z_1,\ldots, z_n]\setminus \{0\}$, set $h = f+ig$ and 
suppose that $f$ and $g$ are not constant multiples of each other. 
The following  are equivalent: 
\begin{enumerate}
\item $h$ is stable;
\item $|h(z)|>|h(\bar{z})|$ for all $z=(z_1,\ldots,z_n)\in\CC^n$ with 
$\Im(z_j)>0$, $j\in [n]$, where $\bar{z}=(\bar{z}_1,\ldots,\bar{z}_n)$; 
\item $f+z_{n+1}g\in\HH_{n+1}(\CC)$;
\item $f$ and $g$ are stable and 
$$
\text{{\em Im}}\!\left(\frac{f(z)}{g(z)}\right)\ge 0
$$
whenever $z=(z_1,\ldots,z_n)\in\CC^n$ with $\Im(z_j)>0$, $j\in [n]$.
\end{enumerate}
\end{lemma}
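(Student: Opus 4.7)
I will prove the four equivalences through the implications $(3) \Leftrightarrow (4)$, $(3) \Rightarrow (1)$, $(1) \Leftrightarrow (2)$, and the main step $(1) \Rightarrow (3)$. The first three are relatively direct; the technical work is concentrated in $(1) \Rightarrow (3)$.

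\textbf{Easy directions.} For $(3) \Rightarrow (1)$, specialize the stable $(n+1)$-variable polynomial $F(z, z_{n+1}) := f + z_{n+1} g$ at $z_{n+1} = i \in \bH_0$; the result $f + ig = h$ is nonvanishing on $\bH_0^n$ and nonzero since $f, g$ are $\CC$-linearly independent (nonzero real polynomials that are not constant multiples). For $(3) \Leftrightarrow (4)$: given (3), Remark \ref{gl} applied in the variable $z_{n+1}$ gives $\partial F/\partial z_{n+1} = g$ stable; the limit $\lim_{\varepsilon \to 0^+} F(z, i\varepsilon) = f$ is stable by Hurwitz (Theorem \ref{mult-hur}); and non-vanishing of $F$ on $\bH_0^{n+1}$ rearranges to $\Im(f(z)/g(z)) \ge 0$ on $\bH_0^n$ wherever $g(z) \neq 0$. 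Conversely, stability of $g$ in (4) ensures $g(z) \neq 0$ on $\bH_0^n$, so any zero $(z,w)$ of $F$ in $\bH_0^{n+1}$ would force $w = -f(z)/g(z)$ with $\Im w \le 0$, contradicting $w \in \bH_0$. For $(1) \Leftrightarrow (2)$, a direct computation using $\overline{f(z)} = f(\bar z)$ and $\overline{g(z)} = g(\bar z)$ (real coefficients) yields
\begin{equation*}
|h(z)|^2 - |h(\bar z)|^2 = 4\, \Im\bigl(f(z)\overline{g(z)}\bigr) = 4 |g(z)|^2 \,\Im\!\left(\frac{f(z)}{g(z)}\right),
\end{equation*}
so (2) is equivalent to strict positivity of $\Im(f/g)$ on $\bH_0^n$ (where $g \neq 0$). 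The upgrade from $\Im(f/g) \ge 0$ (given by (4)) to strict $> 0$ uses the strong minimum principle applied to the pluriharmonic function $\Im(f/g)$, non-proportionality of $f, g$ ruling out $f/g$ being a real constant. The direction $(2) \Rightarrow (1)$ is immediate from $|h(z)| > |h(\bar z)| \ge 0$.

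\textbf{Main obstacle: $(1) \Rightarrow (3)$.} Suppose toward contradiction that $F(z_0, w_0) = 0$ for some $(z_0, w_0) \in \bH_0^{n+1}$. If $g(z_0) = 0$ then $f(z_0) = 0$ and $h(z_0) = 0$, contradicting (1); so $g(z_0) \neq 0$ and $w_0 = -f(z_0)/g(z_0) \in \bH_0$. Consider the linear family $\{h_\lambda := f + \lambda g\}_{\lambda \in \bH_0}$ and let $\Omega = \{\lambda \in \bH_0 : h_\lambda \in \HH_n(\CC)\}$; then $i \in \Omega$ by (1) and $w_0 \notin \Omega$, so by the connectedness of $\bH_0$ it suffices to show $\Omega$ is clopen in $\bH_0$. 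Closedness follows from Hurwitz (Theorem \ref{mult-hur}): for $\lambda_k \to \lambda_\infty \in \bH_0$ with $\lambda_k \in \Omega$, $h_{\lambda_k} \to h_{\lambda_\infty}$ uniformly on compacts, and $\CC$-linear independence of $f, g$ forces $h_{\lambda_\infty} \not\equiv 0$, so the limit is stable. For openness---the technical crux---I would transfer each $z_j$ and the parameter $\lambda$ to the unit disk via M\"obius transformations, obtaining a rescaled polynomial $\tilde h_v(u)$ of bounded total degree on $\DD^n \times \DD$ whose zero-freeness in $u$ encodes stability of $h_\lambda$. The complement $\bH_0 \setminus \Omega$ equals the image of the rational map $\mu := -f/g$ intersected with $\bH_0$; the Schwarz-reflection identity $\mu(\bar z) = \overline{\mu(z)}$ (from the real coefficients of $f, g$) together with the fact that $\mu$ maps the real boundary $\RR^n$ into $\RR \cup \{\infty\}$, combined with a compactness argument on $\bar\DD^n$, shows this image is relatively closed in $\bH_0$. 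By connectedness $\Omega = \bH_0$, contradicting $w_0 \notin \Omega$. This openness step---where zeros of $h_\lambda$ could a priori migrate from the boundary $\partial \bH_0^n$ into the interior as $\lambda$ varies, and are prevented precisely by the real-coefficient structure of $f$ and $g$---is the main difficulty.
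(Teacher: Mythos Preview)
The paper does not prove this lemma; it is merely \emph{recalled} from the companion paper \cite{BB-I} (``Recall the following lemma from \cite{BB-I}''), so there is no in-paper argument to compare against. I can only assess your attempt on its own merits.

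Your treatment of $(3)\Leftrightarrow(4)$, $(3)\Rightarrow(1)$, $(2)\Rightarrow(1)$, and the upgrade $(4)\Rightarrow(2)$ via the minimum principle for the pluriharmonic function $\Im(f/g)$ is correct; the cycle closes once $(1)\Rightarrow(3)$ is in hand, and your clopen strategy for $\Omega=\{\lambda\in\bH_0:h_\lambda\in\HH_n(\CC)\}$ is the right shape, with closedness via Hurwitz unproblematic.

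There is, however, a genuine gap in the openness step. You identify $\bH_0\setminus\Omega$ with $\mu(\bH_0^n)\cap\bH_0$ and assert this is closed in $\bH_0$ by compactifying to $\bar\DD^n$ and using that $\mu$ maps $\RR^n$ into $\RR\cup\{\infty\}$. But the topological boundary $\bar\DD^n\setminus\DD^n$ is far larger than the distinguished boundary $(\partial\DD)^n$: if a subsequence $u_k\to u_\infty$ has, say, $u_\infty^{(1)}\in\partial\DD$ while $u_\infty^{(2)},\dots,u_\infty^{(n)}\in\DD$, the limit corresponds to $z_\infty^{(1)}\in\RR\cup\{\infty\}$ but $z_\infty^{(2)},\dots,z_\infty^{(n)}\in\bH_0$, and your Schwarz reflection $\mu(\bar z)=\overline{\mu(z)}$ says nothing there. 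Moreover $P$ and $Q$ may vanish simultaneously at $u_\infty$, so $-P/Q$ need not even extend continuously. These cases require separate work: e.g., for finite $z_\infty^{(1)}\in\RR$ one uses that the stable polynomial $h_{\lambda_0}$ restricted to $z_1=z_\infty^{(1)}$ is (by Hurwitz) stable in $n-1$ variables or identically zero, the latter forcing a common real linear factor of $f$ and $g$ that one strips off and inducts on degree; for $z_\infty^{(1)}=\infty$ one passes to leading coefficients in $z_1$. A cleaner route, avoiding all boundary bookkeeping, is to reduce to $n=1$: $h$ is stable iff $t\mapsto h(a+tv)$ is univariate stable for every $a\in\RR^n$, $v\in\RR_{>0}^n$; univariate Hermite--Biehler then gives $(t,w)\mapsto f(a+tv)+w\,g(a+tv)$ stable in $\bH_0^2$, and specializing $w=\alpha+t\beta$ with $\beta>0$ yields stability of $f+z_{n+1}g$ in $\bH_0^{n+1}$ by the same characterization in $n+1$ variables.
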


A polynomial 
$f\in\CC[z_1,\ldots,z_n]\setminus \{0\}$ is said to have the
{\em same-phase property} if there exists $\alpha\in\RR$ such that
all the non-zero coefficients of $e^{-i\alpha}f$ are positive. The next 
lemma was first proved in \cite[Theorem 6.1]{COSW}. We will use it in the
proof of Lemma \ref{multi-symbol} below and for completeness
we provide here a self-contained proof based on our results so far. 

\begin{lemma}\label{phase}
If $f\in\CC[z_1,\ldots,z_n]\setminus \{0\}$ is homogeneous of degree $d$ 
and $\bH_\theta$-stable for some $\theta\in [0,2\pi)$ then $f$ has
the same-phase property.
\end{lemma}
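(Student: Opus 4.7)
The plan is to reduce to the real-coefficient case via Lemma~\ref{krein} and then argue by induction on the degree. Since $f$ is homogeneous of degree $d$, the identity $f(e^{-i\theta}z) = e^{-id\theta}f(z)$ combined with $\bH_\theta = e^{-i\theta}\bH_0$ shows that $\bH_\theta$-stability of $f$ is equivalent to its $\bH_0$-stability, so I may assume $f$ is stable. Write $f = P + iQ$ with $P, Q \in \RR[z_1,\ldots,z_n]$ real homogeneous of degree $d$.

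I first claim that $P$ and $Q$ are $\RR$-linearly dependent. If not, Lemma~\ref{krein} gives that both are stable and that the rational function $H := P/Q$ satisfies $\Im H \ge 0$ on $\bH_0^n$. Because numerator and denominator are homogeneous of the same degree, $H$ is scale invariant, so $H(i,\ldots,i) = H(1,\ldots,1) = P(1,\ldots,1)/Q(1,\ldots,1) \in \RR$; here $Q(1,\ldots,1) \neq 0$ since $Q(i,\ldots,i) = i^d Q(1,\ldots,1) \neq 0$ by stability. Thus the pluriharmonic function $\Im H$ attains its minimum $0$ at the interior point $(i,\ldots,i) \in \bH_0^n$, and because $Q$ has no zeros in $\bH_0^n$, the minimum principle for pluriharmonic functions together with the Cauchy--Riemann equations force $H$ to be a real constant $c$ on all of $\bH_0^n$. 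This gives $P = cQ$, contradicting linear independence.

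Consequently $f = \xi R$ for some $\xi \in \CC^*$ and a real homogeneous stable polynomial $R$ of degree $d$, and it suffices to show that such an $R$ has all nonzero coefficients of the same sign. I argue by induction on $d$; the cases $d \le 1$ are direct. For $d \ge 2$, suppose for contradiction that $a_\alpha > 0$ and $a_\beta < 0$ for some $\alpha, \beta \in \supp R$. If there is a common index $j$ with $\alpha_j \ge 1$ and $\beta_j \ge 1$, then $\partial R/\partial z_j$ is stable by Remark~\ref{gl} and of degree $d-1$, yet contains the conflicting coefficients $\alpha_j a_\alpha$ and $\beta_j a_\beta$, contradicting the inductive hypothesis. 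Otherwise $A := \supp\alpha$ and $B := \supp\beta$ are disjoint, and applying the same partial-derivative observation forbids any $\gamma \in \supp R$ from having support meeting both $A$ and $B$. Thus $R$ decomposes as $R = R_A + R_B + R_C$, where $R_A$, $R_B$, $R_C$ are supported on monomials whose support touches $A$ only, $B$ only, or neither, respectively; a further application of the same argument forces $R_A$ to have positive coefficients and $R_B$ negative coefficients.

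The main obstacle is to extract a contradiction from this decomposition, and my plan is a scaling argument. Fix $x_A, x_B, x_C$ in the respective positive orthants. Then $\lambda \mapsto R_A(\lambda x_A, x_C)$ is a polynomial in $\lambda$ with nonnegative coefficients and no constant term (every contributing $\gamma$ satisfies $|\gamma_A| \ge 1$), hence continuously and strictly monotonically maps $(0,\infty)$ onto $(0,\infty)$; likewise $\mu \mapsto -R_B(\mu x_B, x_C)$. Pick $\mu > 0$ with $-R_B(\mu x_B, x_C) > R_C(x_C)$, then $\lambda > 0$ with $R_A(\lambda x_A, x_C) = -R_B(\mu x_B, x_C) - R_C(x_C)$; this yields a zero of $R$ at a point of $\RR_{>0}^n$. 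But $R(ir) = i^d R(r) \neq 0$ for every $r \in \RR_{>0}^n$, since $ir \in \bH_0^n$ and $R$ is stable, completing the induction.
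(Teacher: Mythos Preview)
Your proof is correct and takes a genuinely different route from the paper's. The paper argues by induction on $d$ directly with the complex polynomial $f$: it invokes Euler's identity $f=d^{-1}\sum_j z_j\partial_j f$, applies the inductive hypothesis to each $\partial_j f$ to get phases $\alpha_j$, and then shows all $\alpha_j$ coincide by analyzing the rational functions $\partial_j f/f$ via Lemma~\ref{krein} and a positivity argument on $\RR_+^n$. You instead split the problem in two: first you dispose of the complex phase in one stroke by showing $\Re f$ and $\Im f$ are $\RR$-linearly dependent (using Lemma~\ref{krein} and the minimum principle for the pluriharmonic function $\Im(P/Q)$, exploiting that homogeneity forces $\Im(P/Q)$ to vanish at the interior point $(i,\ldots,i)$), reducing to a real stable homogeneous $R$; then you run a combinatorial induction on $R$, using partial derivatives to handle sign conflicts with overlapping supports and a direct scaling argument to manufacture a positive real zero when the supports are disjoint. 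Your reduction to real coefficients is slicker than the paper's phase-alignment step, and your real-case argument is more elementary and self-contained (no need to study $\partial_j f/f$); the paper's approach, on the other hand, is more uniform and yields along the way the analytic information that $\partial_j f/f$ is a real rational function, which may be of independent interest.
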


\begin{proof}
Note first that the assumptions of the lemma imply that $f$
is $\bH_{\theta'}$-stable for any $\theta'\in[0,2\pi)$. Without loss of 
generality we may also assume that $\partial_jf\not\equiv 0$, where 
$\partial_j=\frac{\partial}{\partial z_j}$. 
We will now use induction on $d$.  
The statement is trivially true for $d=0$ so suppose $d\ge 1$. 
Applying $\frac{\partial}{\partial t}$ to the 
identity $f(tz_1,\ldots,tz_n)=t^df(z_1,\ldots,z_n)$ and setting
$t=1$ we get
\begin{equation}\label{n-sum}
f(z_1,\ldots,z_n)=d^{-1}\sum_{j=1}^{n}z_j\partial_jf(z_1,\ldots,z_n). 
\end{equation}
Each polynomial $\partial_jf$ is stable (e.g.~by Theorem \ref{open-halfplane}) 
and homogeneous of degree $d-1$. By the induction hypothesis 
there exists $\alpha_j\in\RR$ such that $e^{-i\alpha_j}\partial_jf$ has all
non-negative coefficients. In view of \eqref{n-sum} it is therefore 
enough to show 
that $\alpha_j\equiv \alpha_k \bmod 2\pi$, $j,k\in [n]$. For each $j\in [n]$ 
we get by Remark \ref{gl}, Lemma \ref{krein} 
(3) $\Leftrightarrow$ (4) and homogeneity that 
\begin{equation}\label{frac1}
\Im\!\left(\frac{\partial_jf(z)}{f(z)}\right)\le 0, 
\quad z=(z_1,\ldots,z_n)\in \bH_0^n,
\end{equation}
and
\begin{equation}\label{frac2}
\Re\!\left(\frac{\partial_jf(z)}{f(z)}\right)\ge 0, 
\quad z=(z_1,\ldots,z_n)\in \bH_{\frac{\pi}{2}}^n.
\end{equation}
By continuity \eqref{frac1} also holds for all $z \in \RR^n$  
for which $f(z) \neq 0$. Using homogeneity we see that 
$\Im(\partial_j f(-z)/f(-z))=-\Im(\partial_j f(z)/f(z))$  
hence $\partial_j f(z)/f(z)$ is a real rational function. Since 
$e^{-i\alpha_j}\partial_jf(\zeta)\in\RR$ we deduce that 
$e^{-i\alpha_j}f(\zeta)\in\RR$ for $j\in [n]$ and $\zeta\in\RR^n$. 
Now, from \eqref{frac2}
with $z\in\RR_+^n$ and the fact that $e^{-i\alpha_j}\partial_jf(z)>0$ for all
such $z$ we conclude that $e^{-i\alpha_j}f(z)>0$ whenever $z\in\RR_+^n$, 
$j\in [n]$, and thus  
$\alpha_j\equiv \alpha_k \bmod 2\pi$, $j,k\in [n]$, as required. 
\end{proof}

\begin{lemma}\label{multi-symbol}
Let $f(z,w)= \sum_{\alpha \in \NN^n}a(\alpha)z^\alpha w^\alpha 
\in \CC[z_1,\ldots, z_n, w_1,\ldots, w_n]$. Then $f$ is stable if and only if 
it can be written as 
$$
f(z,w)= Cf_1(z_1w_1)\cdots f_n(z_nw_n), 
$$
where $C \in \CC$ and $f_1(t), \ldots, f_n(t)$ are univariate real 
polynomials with real and non-negative zeros only.
\end{lemma}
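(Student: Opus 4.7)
The geometric observation underlying both directions is that $\{zw : z, w \in \bH_0\} = \CC \setminus \RR_{\ge 0}$: writing $z = re^{i\alpha}$, $w = se^{i\beta}$ with $0 < \alpha, \beta < \pi$, the argument $\alpha+\beta$ lies in $(0, 2\pi)$ and so avoids only the positive real ray (any other $\rho e^{i\theta}$ is realized by $\alpha = \beta = \theta/2$). I will write $f(z,w) = F(z_1w_1, \ldots, z_nw_n)$ with $F(t) = \sum_\alpha a(\alpha) t^\alpha$, so that stability of $f$ is equivalent to $F(t) \ne 0$ on $(\CC \setminus \RR_{\ge 0})^n$. The ``if'' direction is then immediate: each factor $z_iw_i - r$ with $r \ge 0$ is non-vanishing on $\bH_0 \times \bH_0$, and products of stable polynomials are stable.

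For the ``only if'' direction I plan to induct on $n$. The base case $n=1$ is direct: the condition forces all roots of $F(t)$ to lie in $\RR_{\ge 0}$, so $F(t) = C \prod_j (t - r_j)$ with $r_j \ge 0$ and $C \in \CC$. For the inductive step I view $F = \sum_{k=0}^d c_k(t_2,\ldots, t_n) t_1^k$ with $d = \deg_{t_1}(F)$, and work on the open set $U = \{(t_2,\ldots,t_n) \in \bH_0^{n-1} : c_d \ne 0\}$. There $F$ has exactly $d$ roots $\rho_1(t_{\ne 1}),\ldots, \rho_d(t_{\ne 1})$ in $t_1$, all lying in $\RR_{\ge 0}$; the elementary symmetric functions $e_j(\rho_1,\ldots, \rho_d) = (-1)^j c_{d-j}/c_d$ are therefore real-valued on $U$, while being rational, hence holomorphic, there. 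Since $U$ is connected---removing the proper complex-analytic subset $\{c_d = 0\}$ (of real codimension $\ge 2$) from the connected set $\bH_0^{n-1}$ leaves it connected---each such ratio must be a real (non-negative) constant. Consequently $F(t) = A(t_1)\, B(t_2,\ldots, t_n)$, with $B = c_d$ and $A(t_1) = \prod_j(t_1 - \rho_j)$ a real univariate polynomial whose zeros all lie in $\RR_{\ge 0}$.

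To close the induction I observe that $B$ inherits the hypothesis: for any fixed $t_1 \notin \RR_{\ge 0}$ one has $A(t_1) \ne 0$, so $B(t_{\ne 1}) = F(t_1, t_{\ne 1})/A(t_1)$ is non-vanishing on $(\CC \setminus \RR_{\ge 0})^{n-1}$. Applying the induction hypothesis to $B$ completes the factorization. The main delicate step I expect is the ``real-valued holomorphic implies constant'' claim, which rests on (i) the standard fact that removing a complex-analytic hypersurface from a connected complex open set preserves connectedness, and (ii) the Cauchy--Riemann observation that a holomorphic function equal to its complex conjugate on an open set is locally constant.
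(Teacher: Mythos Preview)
Your proof is correct and takes a genuinely different route from the paper. The paper's argument is algebraic/combinatorial: it first shows the support of $f$ has unique minimal and maximal elements (invoking the jump-system theory of \cite{Br1}), then recasts the problem via a diagonal operator $T(z^\alpha)=\lambda(\alpha)z^\alpha$ whose symbol $w^\kappa f(z,-w^{-1})$ is stable, applies the same-phase property (Lemma~\ref{phase}) to normalize signs, and finally derives the multiplicative identity $\lambda(\gamma)\lambda(\gamma+e_i+e_j)=\lambda(\gamma+e_i)\lambda(\gamma+e_j)$ by feeding $T$ the test polynomials $z^\gamma(1\pm z_i)(1+z_j)$; this identity is what forces the product decomposition. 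Your approach is purely complex-analytic: the observation $\{zw:z,w\in\bH_0\}=\CC\setminus\RR_{\ge 0}$ converts stability of $f$ into non-vanishing of $F$ on $(\CC\setminus\RR_{\ge 0})^n$, and then a clean induction on $n$---using that the elementary symmetric functions of the $t_1$-roots are real-valued holomorphic, hence constant, on a connected open set---peels off one real-rooted univariate factor at a time. Your argument is shorter and self-contained, avoiding the external machinery (jump systems, Lemma~\ref{phase}, Theorem~\ref{open-halfplane}); the paper's approach, while heavier, situates the lemma inside the multiplier-sequence framework that drives the surrounding section.
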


\begin{proof}
The sufficiency part follows simply by noticing that if $\mu \leq 0$ then 
$\mu +zw$ is a stable polynomial in two variables. 

Suppose that $f(z,w)= \sum_{\alpha \in \NN^n}a(\alpha)z^\alpha w^\alpha$ 
is stable. We claim that its support 
$J:=\supp(f)$ 
has unique minimal and maximal elements with respect to the standard partial 
order on $\NN^n$. Assume the contrary, let 
 $\alpha, \alpha'$ be two different minimal elements and let $i, j$ be 
indices such that 
 $\alpha_i>\alpha'_j$ and $\alpha_j<\alpha'_i$.  
By \cite[Theorem~3.2]{Br1} $J$ is a jump system and since 
$\alpha''=\alpha-e_i \notin J$, there is an index $k \neq i$ such 
that $\alpha''+e_k \in J$. Let $g(z_i,w_i,z_k,w_k)$ be the  polynomial 
$$
\frac {\partial^{\alpha''}}{\partial z^{\alpha''}}
\frac {\partial^{\alpha''}}{\partial w^{\alpha''}} f\Big|_{z_\ell=w_\ell=0,\, 
\ell \notin \{i,k\}} 
$$
and set 
$$
h(z_i,w_i,z_k,w_k) = \lim_{\lambda \rightarrow 0^+} 
\lambda^{-2}g(\lambda z_i,\lambda w_i,\lambda z_k,\lambda w_k). 
$$
By Hurwitz' theorem (Theorem \ref{mult-hur}) $h$ is a stable polynomial. 
However, by construction $h$ is of the form $Az_iw_i + Bz_kw_k$ with 
$AB\neq 0$, which is a contradiction since polynomials of this type cannot 
be stable. This shows that $J$ has a unique minimal element.

If $f(z,1)$ has degree at most $\kappa_i$ in the variable $z_i$, 
$i\in [n]$, we may consider the stable 
polynomial 
$z^\kappa w^\kappa f(-z^{-1}, -w^{-1})$, where 
$\kappa=(\kappa_1,\ldots,\kappa_n)$, $z^{-1}=(z_1^{-1},\ldots,z_n^{-1})$ 
and similarly for $w^{-1}$. By the above the support of the 
latter polynomial has a unique 
minimal element, thus providing a unique maximal element for the support 
$J$ of $f$. 

Let now $\xi, \kappa$ be the minimal, respectively maximal element of $J$ 
and let 
$T : \CC_\kappa[z_1,\ldots, z_n] \rightarrow \CC_\kappa[z_1,\ldots, z_n]$ be 
the linear operator defined by 
$$
T[z^\alpha]= \lambda(\alpha)z^\alpha=(-1)^\alpha \binom{\kappa}{\alpha}^{-1}
a(\alpha)z^\alpha, \quad 0\leq \alpha \leq \kappa. 
$$
Let $\{e_j\}_{j=1}^n$ be the standard orthonormal basis of $\RR^n$. 
We want to show that 
\begin{equation}\label{split}
\lambda(\alpha)= \lambda(\xi)^{-n+1}\lambda(\xi +(\alpha_1-\xi_1)e_1)\cdots 
\lambda(\xi +(\alpha_n-\xi_n)e_n), \quad \xi \leq \alpha \leq \kappa. 
\end{equation}
This will then prove the necessity part since $f$ will split into a product 
as in the statement of the lemma and the polynomials 
$f_j(t)$, $1\leq j \leq n$, will have the desired properties since 
$f_j(z_jw_j)$ is necessarily stable. 

To prove \eqref{split} note that the algebraic symbol of $T$ is given by 
$$
G_T(z,w)
=\sum_{\alpha\le \kappa} (-1)^\alpha a(\alpha)z^\alpha w^{\kappa -\alpha}
=w^\kappa f(z,-w^{-1}),
$$
which is stable. By Theorem \ref{open-halfplane} $T$ preserves 
stability. 
Since $G_T(z,w)$ is homogeneous we may assume  
that $(-1)^\alpha a(\alpha) \geq 0$ for all $\alpha$ in view of 
Lemma~\ref{phase}. Now, it is easy to check that 
\begin{equation}\label{peacy}
a +bz +cw+dzw \in \RR[z,w] \mbox{ is stable } \Longleftrightarrow bc \geq ad  
\end{equation}
(see, e.g., \cite{Br1} or
just adapt the arguments in the proof of Lemma \ref{maximum})  
and of course 
\begin{equation}\label{easypeacy}
a +2bz +cz^2 \in \RR[z] \mbox{ is stable } \Longleftrightarrow b^2 \geq ac. 
\end{equation} 
Let $\gamma \in \NN^n$ and $1\leq i,j\leq n$ be such that 
$\gamma +e_i +e_j \leq \kappa$. Applying 
$T$ to the polynomials $z^\gamma (1+z_i)(1+z_j)$ and 
$z^\gamma (1-z_i)(1+z_j)$ and keeping  
\eqref{peacy} and \eqref{easypeacy} in mind we see that 
$\lambda(\gamma)\lambda(\gamma+e_i+e_j)\geq 
\lambda(\gamma+e_i)\lambda(\gamma+e_j)$ and 
$\lambda(\gamma)\lambda(\gamma+e_i+e_j)\leq 
\lambda(\gamma+e_i)\lambda(\gamma+e_j)$ hence
\begin{equation}\label{lambdas}
\lambda(\gamma)\lambda(\gamma+e_i+e_j) 
= \lambda(\gamma+e_i)\lambda(\gamma+e_j)\, 
\mbox{ whenever } \gamma \in \NN^n \mbox{ and } \gamma +e_i +e_j \leq \kappa.
\end{equation}
From \eqref{lambdas} and \cite[Corollary 3.7]{Br1} we deduce that 
$\lambda(\gamma)>0$ for all $\xi \leq \gamma \leq \kappa$.   
The proposed formula \eqref{split} now follows by induction over 
$k:=|\alpha|-|\xi|$. 
\end{proof}

Recall the following theorem from \cite{BB-I}.

\begin{theorem}\label{multi-finite-hyp}
Let $\kappa \in \NN^n$ and 
$T : \RR_\kappa[z_1,\ldots, z_n] \rightarrow \RR[z_1,\ldots, z_n]$ be a 
linear operator. Then $T$ preserves real stability if and only if either
\begin{itemize}
\item[(a)] $T$ has at most $2$-dimensional range and is given by 
$
T(f) = \alpha(f)P + \beta(f)Q,
$
where $\alpha, \beta$ are real linear forms on $\RR_\kappa[z_1,\ldots, z_n]$
and $P,Q\in\HH_n(\RR)$ are such that $P +iQ\in\HH_n(\CC)$, or 
\item[(b)] Either $T[(z+w)^\kappa]\in \HH_{2n}(\RR)$ or 
$T[(z-w)^\kappa]\in \HH_{2n}(\RR)$.
\end{itemize}
\end{theorem}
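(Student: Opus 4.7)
My plan is to reduce Theorem~\ref{multi-finite-hyp} to its complex counterpart, Theorem~\ref{open-halfplane}, using Lemma~\ref{krein} as a bridge between real stability on $n$ variables and complex stability. The two alternatives in condition~(b) will turn out to correspond to two natural complex-linear extensions of $T$: the straight $\CC$-linear extension $\hat T$, and the sign-twisted extension $\hat T\circ\sigma$, where $\sigma(f)(z):=f(-z)$. Throughout, Hurwitz' theorem \ref{mult-hur} provides the glue for limiting arguments.

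\textbf{Sufficiency.} If $T$ has the form in (a), take $f\in\HH_n(\RR)\cap\RR_\kappa[z]$; then $T(f)=aP+bQ$ with $a,b\in\RR$. By Lemma~\ref{krein} the polynomial $P+z_{n+1}Q$ belongs to $\HH_{n+1}(\RR)$, and specializing $z_{n+1}$ to real values (passing to the limit via Theorem~\ref{mult-hur} when $a=0$) gives $aP+bQ\in\HH_n(\RR)\cup\{0\}$. If instead $T[(z+w)^\kappa]\in\HH_{2n}(\RR)\subseteq\HH_{2n}(\CC)$, then the $\CC$-linear extension $\hat T$ satisfies $\hat T[(z+w)^\kappa]\in\HH_{2n}(\CC)$, and Theorem~\ref{open-halfplane}(b) says $\hat T$ preserves complex stability. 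For any $f\in\HH_n(\RR)\cap\RR_\kappa[z]\subset\HH_n(\CC)$, $T(f)=\hat T(f)$ is complex stable with real coefficients, hence lies in $\HH_n(\RR)\cup\{0\}$. The case $T[(z-w)^\kappa]\in\HH_{2n}(\RR)$ is reduced to the previous one by considering $T':=T\circ\sigma$: since $\sigma$ preserves real stability (real coefficients force zeros to be symmetric under conjugation, so $f(-z)$ has no zeros in $\bH_0^n$ whenever $f$ does), $T'$ preserves real stability, and $T'[(z+w)^\kappa]=(-1)^{|\kappa|}T[(z-w)^\kappa]\in\HH_{2n}(\RR)$.

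\textbf{Necessity.} Assume $T$ preserves real stability; we want (a) or (b). The key intermediate claim is that at least one of the complex operators $\hat T$ or $\hat T\circ\sigma$ preserves complex stability. To see this, take $h=f+ig\in\HH_n(\CC)$ with $f,g\in\RR[z]$: by Lemma~\ref{krein}, $f+z_{n+1}g\in\HH_{n+1}(\RR)$, and we would like to conclude $T(f)+z_{n+1}T(g)\in\HH_{n+1}(\RR)$ so that Lemma~\ref{krein} again yields $\hat T(h)\in\HH_n(\CC)$. This lifting step, which says that real-stability preservation in $n$ variables carries over to the trivial extension in an auxiliary variable, is the technical heart of the argument and is the main obstacle. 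Granting it, once $\hat T$ (respectively $\hat T\circ\sigma$) preserves complex stability, Theorem~\ref{open-halfplane} gives two possibilities: either the range has dimension at most one, which upon writing the single output $P+iQ$ and restricting to real inputs produces $T(f)=\alpha(f)P+\beta(f)Q$ with $P+iQ\in\HH_n(\CC)$, i.e.~alternative (a); or the complex symbol is stable, which, having real coefficients because $T$ is a real operator, lands in $\HH_{2n}(\RR)$, giving $T[(z+w)^\kappa]\in\HH_{2n}(\RR)$ or $T[(z-w)^\kappa]\in\HH_{2n}(\RR)$ respectively.

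\textbf{Where the difficulty lies.} The genuinely new work beyond the complex classification is the dichotomy itself: the example $T(f)(z)=f(-z)$ shows that $\hat T$ need not preserve complex stability even when $T$ preserves real stability, and only the twisted extension $\hat T\circ\sigma$ does (which is exactly why both $(z\pm w)^\kappa$ symbols are needed). Establishing that the dichotomy is exhaustive requires the lifting-to-an-extra-variable lemma mentioned above; its proof in turn relies on Hermite--Kakeya--Obreschkoff-type interlacing ideas combined with Lemma~\ref{krein}, and is precisely where real-stability theory departs from complex-stability theory.
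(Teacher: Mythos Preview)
The present paper does not prove Theorem~\ref{multi-finite-hyp}; it is quoted from the companion paper \cite{BB-I} (see the sentence ``Recall the following theorem from \cite{BB-I}'' immediately preceding it) and used here as a black box. So there is no proof in this paper to compare your proposal against.

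On the substance of your outline: the architecture you describe---reduce to the complex classification Theorem~\ref{open-halfplane} via Lemma~\ref{krein}, and account for the $(z\pm w)^\kappa$ dichotomy through the sign-twist $\sigma(f)(z)=f(-z)$---is indeed the strategy of \cite{BB-I}, and your sufficiency argument is correct and complete. The necessity direction, however, has a real gap that goes beyond an omitted lemma. The ``lifting step'' you write down, namely that $T(f)+z_{n+1}T(g)\in\HH_{n+1}(\RR)$ whenever $f+z_{n+1}g\in\HH_{n+1}(\RR)$, is not merely unproved: it is \emph{false} in general, and your own example $T(f)(z)=f(-z)$ already witnesses the failure (take $n=1$, $f(z_1)=z_1$, $g=1$; then $z_1+z_2$ is real stable but $-z_1+z_2$ is not). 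So ``granting it'' is not an option. What \cite{BB-I} actually proves is the two-sided statement: if $f+ig\in\HH_n(\CC)$ with $f,g$ real, then \emph{one of} $T(f)\pm iT(g)$ lies in $\HH_n(\CC)\cup\{0\}$, together with the crucial fact that the choice of sign is \emph{globally consistent}---the same sign works for every such pair $(f,g)$. That global consistency is the genuinely delicate point, and it is where the Hermite--Kakeya--Obreschkoff interlacing you allude to enters. Your write-up correctly locates the obstacle and names the right ingredients, but it never formulates the correct two-sided lemma or the sign-consistency argument, so as it stands the necessity direction remains a sketch rather than a proof.
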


The ``hard'' multivariate version of P\'olya-Schur's theorem \cite{PS} is 
as follows.

\begin{theorem}\label{kappa-mult}
Let $\kappa \in \NN^n$, let 
$\lambda:=\{\lambda(\alpha)\}_{\alpha \leq \kappa}$ be a 
sequence of real numbers and 
$T: \RR_\kappa[z_1,\ldots, z_n] \rightarrow \RR_\kappa[z_1, \ldots, z_n]$ 
 be the corresponding linear operator. The following are equivalent:
\begin{itemize}
\item[(a)] $\lambda$ is a $\kappa$-multiplier sequence; 
\item[(b)] $\pm \lambda$ is the product of one-dimensional 
$\kappa_i$-multiplier sequences that are either all alternating in sign or 
all non-negative, i.e., 
$$
\pm \lambda(\alpha) = \lambda_1 (\alpha_1) \cdots \lambda_n(\alpha_n), 
\quad \alpha \leq \kappa,
$$
where $\lambda_i$ is a $\kappa_i$-multiplier sequence, $i\in [n]$,  
and either all 
$\lambda_i$'s are non-negative or all are alternating in sign; 
\item[(c)] Either $T[(z+w)^\kappa]\in \HH_{2n}(\RR)$ or  
$T[(z-w)^\kappa]\in \HH_{2n}(\RR)$; 
\item[(d)] $T[(z+w)^\kappa]$ or  $T[(z-w)^\kappa]$ can be written as 
$$
f_1(z_1w_1)\cdots f_n(z_nw_n), 
$$
where $f_1(t), \ldots, f_n(t)$ are univariate polynomials with real zeros 
only, and all these zeros have the same sign (collectively).
\end{itemize}
\end{theorem}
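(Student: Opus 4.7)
\textbf{Proof sketch for Theorem \ref{kappa-mult}.} My plan is to establish the cycle $(b)\Rightarrow(a)\Leftrightarrow(c)\Leftrightarrow(d)\Rightarrow(b)$. The equivalence $(a)\Leftrightarrow(c)$ follows directly from Theorem \ref{multi-finite-hyp}: the diagonal operator $T$ preserves real stability exactly when one of the two algebraic symbols $T[(z+w)^\kappa]$, $T[(z-w)^\kappa]$ lies in $\HH_{2n}(\RR)$, unless $T$ has a range of dimension at most two of the special form in Theorem \ref{multi-finite-hyp}(a). For diagonal $T$ the latter case reduces to at most two nonzero $\lambda(\alpha)$'s, and in either subcase a direct inspection (a single monomial $z^\alpha w^{\kappa-\alpha}$, or two monomials whose stability is forced by $P+iQ\in\HH_n(\CC)$) shows one of the two symbols is still manifestly real stable.

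For $(c)\Leftrightarrow(d)$ the central tool is Lemma \ref{multi-symbol}. The key manipulation is the substitution $w_i\mapsto -1/w_i$ combined with multiplication by $w^\kappa$, which preserves $\bH_0^{2n}$-stability because $\zeta\mapsto -1/\zeta$ is an automorphism of the upper half-plane. Applied to $T[(z-w)^\kappa]=\sum_{\alpha\leq\kappa}(-1)^{\kappa-\alpha}\binom{\kappa}{\alpha}\lambda(\alpha)z^\alpha w^{\kappa-\alpha}$, the signs $(-1)^{\kappa-\alpha}$ cancel against $(-1)^{\kappa-\alpha}$ from the substitution and the result is $\sum_{\alpha\leq\kappa}\binom{\kappa}{\alpha}\lambda(\alpha)z^\alpha w^\alpha$, which is exactly of the form treated by Lemma \ref{multi-symbol}. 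That lemma identifies its stability with a factorization $C\prod_i f_i(z_iw_i)$ where each $f_i$ is univariate real-rooted with only non-negative zeros. The parallel analysis of $T[(z+w)^\kappa]$ yields the mirror-image factorization in which each $f_i$ has only non-positive zeros. This delivers (d), the two sign alternatives for the zeros corresponding to the choice of symbol.

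For $(d)\Rightarrow(b)$ I read the factorization directly: writing $f_i(t)=\sum_k c_{ik}t^k$ and setting $\lambda_i(k):=c_{ik}/\binom{\kappa_i}{k}$ yields $\pm\lambda(\alpha)=\prod_i\lambda_i(\alpha_i)$. Each $\lambda_i$ is a one-dimensional $\kappa_i$-multiplier sequence: the univariate symbol $T_i[(z_i-w_i)^{\kappa_i}]$ evaluates (up to sign and a $w_i^{\kappa_i-\deg f_i}$ factor) to $w_i^{\kappa_i}f_i(-z_i/w_i)=c_{i,\deg f_i}\prod_j(-z_i-r_{ij}w_i)$ with $r_{ij}\geq 0$, and this has no zero in $\bH_0^2$ since $-z_i\in-\bH_0$ while $r_{ij}w_i\in\bH_0\cup\{0\}$. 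The sign pattern of the $c_{ik}$ (alternating when $f_i$ has only non-negative zeros, constant when only non-positive) furnishes exactly the dichotomy required in (b). Finally $(b)\Rightarrow(a)$ is immediate: in the non-negative case, $T[(z+w)^\kappa]$ factors as $\pm\prod_i T_i[(z_i+w_i)^{\kappa_i}]$, a product of bivariate stable polynomials, hence stable; the alternating case handles $T[(z-w)^\kappa]$ similarly, and (a) then follows from Theorem \ref{multi-finite-hyp}. The main obstacle is the sign bookkeeping: the two choices of symbol, the $\pm\lambda$ ambiguity, and the dichotomy non-negative vs.\ alternating multiplier sequences must all be matched consistently across the four conditions, and the degenerate rank-$\leq 2$ case of Theorem \ref{multi-finite-hyp} must be handled separately for diagonal $T$.
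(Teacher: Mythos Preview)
Your proof is correct and follows essentially the same route as the paper, which simply declares the theorem ``an immediate consequence of Theorem~\ref{multi-finite-hyp} and Lemma~\ref{multi-symbol}.'' You have unpacked that terse justification in detail---the M\"obius substitution $w_i\mapsto -1/w_i$ linking the symbols $T[(z\pm w)^\kappa]$ to the diagonal form in Lemma~\ref{multi-symbol}, the sign bookkeeping matching the two symbol choices to the non-negative/alternating dichotomy in (b), and the separate treatment of the rank-$\leq 2$ degenerate case of Theorem~\ref{multi-finite-hyp}---all of which the paper leaves implicit.
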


\begin{proof}
This is an immediate consequence of Theorem \ref{multi-finite-hyp} and
 Lemma \ref{multi-symbol}. 
\end{proof}

\begin{remark}\label{k-stab}
Note that the $\kappa$-multiplier sequences with constant sign are precisely 
the sequences whose corresponding operators preserve stability. 
\end{remark}

\begin{corollary}\label{multi-Hurwitz}
Let $\kappa \in \NN^n$,  
$\lambda:=\{\lambda(\alpha)\}_{\alpha \leq \kappa}$ be a sequence of complex 
numbers and 
$T: \CC_\kappa[z_1,\ldots, z_n] \rightarrow \CC_\kappa[z_1, \ldots, z_n]$  
be the corresponding linear operator. Then $T$ preserves weak 
Hurwitz stability 
if and only if $\lambda$ is (a constant complex multiple of) a non-negative 
$\kappa$-multiplier sequence. 
\end{corollary}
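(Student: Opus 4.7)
The approach is to combine the classification in Theorem \ref{open-disk} with the factorization theorem for ``diagonal'' stable polynomials given by Lemma \ref{multi-symbol}. First I would apply Theorem \ref{open-disk} with $C=\bH_{\pi/2}$ to the diagonal operator $T$. Alternative (a) of that theorem forces $\lambda$ to be supported on a single multi-index $\alpha_0$, in which case $\lambda=c\,\delta_{\alpha_0}$, and the indicator $\delta_{\alpha_0}$ is trivially a non-negative $\kappa$-multiplier sequence (the rank-one operator $f\mapsto z^{\alpha_0}\cdot[z^{\alpha_0}]f$ preserves stability since $z^{\alpha_0}$ is non-vanishing on $\bH_0^n$). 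Thus we may assume alternative (b) holds, reducing the problem to the fact that the symbol
\[
P(z,w):=T[(1+zw)^\kappa]=\sum_{\alpha\leq\kappa}\binom{\kappa}{\alpha}\lambda(\alpha)\,z^\alpha w^\alpha
\]
is weakly Hurwitz stable.

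Next I would pass to ordinary stability via the rotation $(z,w)\mapsto(-iz,-iw)$, which sends $\bH_0^{2n}$ bijectively onto $\bH_{\pi/2}^{2n}$. The polynomial
\[
\tilde P(z,w):=P(-iz,-iw)=\sum_{\alpha\leq\kappa}(-1)^{|\alpha|}\binom{\kappa}{\alpha}\lambda(\alpha)\,z^\alpha w^\alpha
\]
is therefore $\bH_0$-stable, and crucially it still has the ``diagonal'' form $\sum c(\alpha)z^\alpha w^\alpha$ to which Lemma \ref{multi-symbol} applies. That lemma gives a factorization
\[
\tilde P(z,w)=C\prod_{i=1}^{n}f_i(z_iw_i),
\]
with $C\in\CC$ and univariate real polynomials $f_i(t)=\sum_k a_i(k)t^k$ whose zeros are all real and non-negative.

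The non-negativity of the zeros of $f_i$ implies that $(-1)^ka_i(k)$ has constant sign $\varepsilon_i\in\{\pm1\}$ as $k$ varies. Matching coefficients of $z^\alpha w^\alpha$ in the two expressions for $\tilde P$ then yields
\[
\lambda(\alpha)=c\prod_{i=1}^{n}\mu_i(\alpha_i),\qquad c:=C\prod_i\varepsilon_i\in\CC,\quad \mu_i(k):=\frac{|a_i(k)|}{\binom{\kappa_i}{k}}\geq 0.
\]
For each $i$ the polynomial $\sum_{k\leq\kappa_i}\binom{\kappa_i}{k}\mu_i(k)z^k=\varepsilon_i f_i(-z)$ has only non-positive real zeros, so its bivariate homogenization $\sum_k\binom{\kappa_i}{k}\mu_i(k)z^kw^{\kappa_i-k}$ is real stable; the univariate case of Theorem \ref{kappa-mult} then identifies $\mu_i$ as a non-negative $\kappa_i$-multiplier sequence, and Theorem \ref{kappa-mult}(b) identifies the product $\prod_i\mu_i(\alpha_i)$ as a non-negative $\kappa$-multiplier sequence. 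This settles the necessity.

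For sufficiency, write $\lambda=c\nu$ with $\nu\geq 0$ a $\kappa$-multiplier sequence. By Remark \ref{k-stab} the operator $S:z^\alpha\mapsto\nu(\alpha)z^\alpha$ preserves (complex) stability. Because $S$ is diagonal it commutes with the rescaling $z\mapsto -iz$: explicitly $S(f(-iz))=(Sf)(-iz)$. Hence if $f$ is weakly Hurwitz stable, then $f(-iz)$ is $\bH_0$-stable, so $(Sf)(-iz)=S(f(-iz))$ is $\bH_0$-stable or zero, and therefore $Sf$ itself is weakly Hurwitz stable or zero. Multiplication by the complex scalar $c$ does not affect non-vanishing, so $T=cS$ preserves weak Hurwitz stability. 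The principal obstacle I anticipate is the coefficient bookkeeping in the middle paragraph—correctly absorbing the signs $\varepsilon_i$ into the overall constant $c$ and recognising $\varepsilon_i f_i(-z)$ as the one-variable symbol of a non-negative $\kappa_i$-multiplier sequence—but everything else is essentially forced by Theorem \ref{open-disk}, Lemma \ref{multi-symbol}, and Theorem \ref{kappa-mult}.
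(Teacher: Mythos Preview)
Your proof is correct and follows essentially the same route as the paper: reduce via the symbol criterion to a ``diagonal'' polynomial $\sum_\alpha c(\alpha)z^\alpha w^\alpha$, rotate to $\bH_0$-stability, apply Lemma~\ref{multi-symbol} to factor, and read off that $\lambda$ is a complex multiple of a non-negative $\kappa$-multiplier sequence (invoking Remark~\ref{k-stab}/Theorem~\ref{kappa-mult}). The only cosmetic difference is that you compute the symbol via Theorem~\ref{open-disk} (the $(1+zw)^\kappa$ form), whereas the paper uses Theorem~\ref{open-halfplane} (the $(z+w)^\kappa$ form) and then passes to the diagonal shape by $w\mapsto w^{-1}$; as the paper itself notes, these are equivalent for $\bH_{\pi/2}$, and your choice in fact lands directly on the diagonal form. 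You also treat the rank-one alternative (a) explicitly, which the paper silently absorbs into (b) since for a diagonal $T$ of rank one the symbol is a monomial and hence already weakly Hurwitz stable.
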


\begin{proof}
By Theorem \ref{open-halfplane}  $T$ preserves 
weak Hurwitz stability if and only if the polynomial 
$$
T[(z+w)^\kappa]= \sum_{\alpha \leq \kappa} \binom \kappa \alpha 
\lambda(\alpha)z^\alpha w^{\kappa-\alpha} 
$$
is weakly Hurwitz stable, which occurs exactly 
when 
$$
\sum_{\alpha \leq \kappa} (-1)^\alpha\binom \kappa \alpha 
\lambda(\alpha)z^\alpha w^\alpha 
$$
is stable. The assertion now follows from Lemma~\ref{multi-symbol} and 
Remark \ref{k-stab}.
\end{proof}

\section{Multivariate Apolarity}\label{s-apol}

The goal of this section is to develop a higher-dimensional apolarity theory 
and establish Grace type theorems for arbitrary multivariate 
polynomials.

Two univariate polynomials 
$f(z)= \sum_{k=0}^n \binom n k a_k z^k$ and 
$g(z)= \sum_{k=0}^n \binom n k b_k z^k$ of degree at most $n$ are 
{\em apolar} if 
$$
\{f,g\}_n:= \sum_{k=0}^n (-1)^k f^{(k)}(0)g^{(n-k)}(0) 
= \frac 1 {n!}\sum_{k=0}^n (-1)^k\binom n k a_kb_{n-k} =0.
$$
Grace's classical apolarity theorem is as follows \cite{grace,M,RS,szego}. 

\begin{theorem}[Grace] 
Let $f$ and $g$ be apolar polynomials of degree $n\ge 1$. 
If $f$ has all zeros in a circular domain $C$ then $g$ has at least one zero 
in $C$. 
\end{theorem}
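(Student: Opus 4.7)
The plan is to reduce Grace's theorem to the Grace-Walsh-Szeg\"o coincidence theorem (Theorem \ref{GWS}) via the polarization of $f$. Write $f(z)=\sum_{k=0}^n\binom{n}{k}a_k z^k$ and $g(z)=\sum_{k=0}^n\binom{n}{k}b_k z^k$ with $a_n,b_n\neq 0$, and let $F(z_1,\ldots,z_n)=\sum_{k=0}^n a_k e_k(z_1,\ldots,z_n)$ be the unique symmetric multi-affine polarization of $f$; by construction $F(z,\ldots,z)=f(z)$ and $F$ has total degree $n$ in the $z_i$.

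The first step would be to establish the algebraic identity
$$
\{f,g\}_n=\frac{b_n}{n!}\,F(\beta_1,\ldots,\beta_n),
$$
where $\beta_1,\ldots,\beta_n$ are the zeros of $g$ listed with multiplicity. This is a routine computation: from $g(z)=b_n\prod_i(z-\beta_i)$ one gets $\binom{n}{k}b_k=b_n(-1)^{n-k}e_{n-k}(\beta_1,\ldots,\beta_n)$, and substituting into the definition of $\{f,g\}_n$ and reindexing yields the claim. With this identity, the apolarity hypothesis $\{f,g\}_n=0$ is equivalent to $F(\beta_1,\ldots,\beta_n)=0$.

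Now I would argue by contradiction: suppose $g$ has no zero in $C$, so every $\beta_i$ lies in the complementary circular domain $C':=\widehat{\CC}\setminus C$ (open if $C$ is closed, closed if $C$ is open). Since $F$ is symmetric, multi-affine, and has total degree $n$ equal to its number of variables, Theorem \ref{GWS} applies on $C'$ without any convexity assumption, producing $\xi\in C'$ with $F(\beta_1,\ldots,\beta_n)=F(\xi,\ldots,\xi)=f(\xi)$. Since all zeros of $f$ lie in $C$ and $\xi\notin C$, we have $f(\xi)\neq 0$, contradicting $F(\beta_1,\ldots,\beta_n)=0$.

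The main ``obstacle'' is really only the bookkeeping for the apolarity-polarization identity of the first step; the substantive content is absorbed entirely into Theorem \ref{GWS}. What makes the argument work in full generality is precisely the total-degree hypothesis on $f$, which is exactly what allows Theorem \ref{GWS} to be applied on the non-convex domain $C'$ (which arises when $C$ is a disk, so $C'$ is the exterior of a disk).
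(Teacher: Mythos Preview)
Your argument is correct. The polarization identity reduces apolarity to $F(\beta_1,\ldots,\beta_n)=0$, and since $\deg f=n$ forces $F$ to have total degree $n$, Theorem~\ref{GWS} applies on the complementary domain $C'$ regardless of convexity; the contradiction follows. (The constant in your identity depends on which of the paper's two normalizations of $\{f,g\}_n$ one adopts, but this is immaterial.)

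The paper does not actually prove Grace's theorem; it is quoted as a classical result with references. However, the paper's own multivariate apolarity theorems (Theorems~\ref{appo} and~\ref{ap-halfplane}) specialize to Grace for $n=1$, and those are proved by a genuinely different route: rather than polarizing and invoking GWS, the paper reduces to the unit disk via the M\"obius invariance of the apolar form (Lemma~\ref{l-apro}) and then applies the master composition theorem (Corollary~\ref{master-comp}) to conclude non-vanishing. Your approach is shorter once GWS is in hand, and it meshes nicely with the paper's logical architecture, since Section~\ref{s-gws} establishes GWS \emph{without} using apolarity precisely so that the classical implication can be reversed. The paper's composition-theorem route, on the other hand, is what generalizes cleanly to the multivariate setting where a single polarization no longer suffices.
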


Note that we may reformulate Grace's theorem as follows. 

\begin{theorem} 
Let $f$ and $g$ be polynomials of degree $n\ge 1$ and $C$ be a 
circular domain. 
If $f$ is $C$-stable and $g$ is $\CC \setminus C$-stable then 
$\{f,g\}_n \neq 0$. 
\end{theorem}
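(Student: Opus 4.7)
The plan is to derive this reformulation of Grace's theorem from the Grace-Walsh-Szeg\"o theorem (Theorem~\ref{GWS}) via a standard polarization argument. Since $g$ has degree $n$ and is $\CC\setminus C$-stable, I would start by writing $g(z)=b_n\prod_{i=1}^n(z-\beta_i)$ with $b_n\neq 0$ and $\beta_1,\ldots,\beta_n\in C$. The idea is to encode $f$ as a symmetric multi-affine polynomial in $n$ variables and thereby convert the apolarity condition $\{f,g\}_n=0$ into a statement about the values of a polarization of $f$ at the zeros of $g$.

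Concretely, let $F(z_1,\ldots,z_n)$ be the unique symmetric multi-affine polarization of $f$, so that $F(z,\ldots,z)=f(z)$. A direct computation from the definition of $\{\cdot,\cdot\}_n$ gives $\{f,(z-\beta)^n\}_n = f(\beta)/n!$. Since $\prod_{i=1}^n(z-\beta_i)$ is multi-affine and symmetric in $(\beta_1,\ldots,\beta_n)$, the linearity of $\{f,\cdot\}_n$ together with the uniqueness of polarization yields the key identity
$$
\{f,g\}_n \;=\; \frac{b_n}{n!}\,F(\beta_1,\ldots,\beta_n).
$$
Moreover, because $f$ has degree exactly $n$, the coefficient of $z_1\cdots z_n$ in $F$ equals the leading coefficient of $f$ (in the $\binom{n}{k}a_k$ normalization) and is therefore nonzero, so $F$ has total degree $n$.

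Now suppose for contradiction that $\{f,g\}_n=0$. Since $b_n\neq 0$ this forces $F(\beta_1,\ldots,\beta_n)=0$. Theorem~\ref{GWS} then applies (its total-degree hypothesis holds irrespective of the convexity of $C$) and produces a point $\xi\in C$ with $F(\xi,\ldots,\xi)=F(\beta_1,\ldots,\beta_n)=0$. But $F(\xi,\ldots,\xi)=f(\xi)$, so $f$ has a zero in $C$, contradicting the $C$-stability of $f$. Hence $\{f,g\}_n\neq 0$.

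The main thing that needs care is the verification of the apolarity identity above and the total-degree claim for $F$; both are routine multilinear algebra once the polarization is set up correctly, after which Theorem~\ref{GWS} essentially does the rest for free.
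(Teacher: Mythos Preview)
Your argument is correct. The polarization identity $\{f,g\}_n = c\, b_n\, F(\beta_1,\ldots,\beta_n)$ for some nonzero constant $c$ (your stated constant $1/n!$ depends on which of the two displayed normalizations of $\{\cdot,\cdot\}_n$ one adopts, but this is immaterial) follows exactly as you outline, and since $\deg f = n$ forces the polarization $F$ to have total degree $n$, Theorem~\ref{GWS} applies for every circular domain $C$ regardless of convexity.

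The paper does not actually prove this statement: it is presented as a reformulation of the classical Grace apolarity theorem and taken as known. The paper's own contribution is the \emph{multivariate} apolarity theorems (Theorems~\ref{appo} and~\ref{ap-halfplane}), and those are established by a genuinely different mechanism---the master composition theorem (Corollary~\ref{master-comp}) together with M\"obius invariance of $\{\cdot,\cdot\}_\kappa$ (Lemma~\ref{l-apro})---rather than by reducing to Grace--Walsh--Szeg\"o. Your route (GWS $\Rightarrow$ Grace) is the standard classical reduction, historically often run in the opposite direction; it is legitimate here because the paper has given an independent proof of GWS via symmetrization (Theorem~\ref{symmetrize}), so there is no circularity. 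The composition-theorem approach has the advantage that it extends directly to several variables, where a polarization argument of the kind you give is not available; your approach is simpler and more direct in the univariate case.
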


For two polynomials $f,g \in \CC[z_1,\ldots, z_n]$ and $\kappa \in \NN^n$ 
define 
$$
\{f,g\}_\kappa := 
\sum_{\alpha \leq \kappa}(-1)^\kappa f^{(\alpha)}(0)g^{(\kappa-\alpha)}(0)
$$
and call $f$ and $g$ {\em apolar} if they both have degree at most 
$\kappa$ and 
$\{f,g\}_\kappa=0$. 

Hinkkanen \cite{hink} wondered if Grace's theorem could be extended to 
several variables (he actually only considered multi-affine polynomials) but 
the precise form of such an extension remained uncertain. 
He also claimed that arguments due to Ruelle and 
Dyson \cite{ruelle1,ruelle4} could be extended to prove the following 
result. 

\begin{lemma}\label{l-wrong}
Let $A$ and $B$ be closed subsets of $\CC$ which do not contain the origin
 and let $f,g \in \CC_{(1^2)}[z_1,z_2]$. If $f$ is $A \times B$-stable and 
$g$ is $(\CC\setminus A) \times (\CC \setminus B)$-stable then 
$\{f,g\}_{(1^2)} \neq 0$. 
\end{lemma}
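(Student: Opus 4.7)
I would attempt a proof by contradiction: assume $\{f,g\}_{(1,1)}=0$ and derive a contradiction by exploiting the M\"obius geometry of bidegree-$(1,1)$ multi-affine polynomials. Let $M_f=(a_{ij})_{0\le i,j\le 1}$ and $M_g=(b_{ij})_{0\le i,j\le 1}$ denote the $2\times 2$ coefficient matrices. A direct computation yields
\[
\{f,g\}_{(1,1)}=a_{00}b_{11}-a_{10}b_{01}-a_{01}b_{10}+a_{11}b_{00}=\operatorname{tr}\bigl(M_f\,\operatorname{adj}(M_g)\bigr).
\]
Assuming $\det M_f\ne 0$ and $\det M_g\ne 0$, the zero locus of $f$ is the graph $\{z_1=\alpha_f(z_2)\}$ of the M\"obius transformation $\alpha_f(\zeta)=-(a_{00}+a_{01}\zeta)/(a_{10}+a_{11}\zeta)$, and similarly for $\alpha_g$. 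Dividing the apolar identity by $\det M_g$ recasts $\{f,g\}_{(1,1)}=0$ as $\operatorname{tr}(M_fM_g^{-1})=0$, which is precisely the criterion for $\Phi:=\alpha_g^{-1}\circ\alpha_f$ to be a non-identity M\"obius involution in $\operatorname{PGL}(2,\CC)$.

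\textbf{Turning stability into set-theoretic constraints.} The two stability hypotheses unfold as $\alpha_f(B)\subseteq\CC\setminus A$ and $\alpha_g(\CC\setminus B)\subseteq A$. Since $A$ is closed the second inclusion upgrades to $\overline{\alpha_g(\CC\setminus B)}\subseteq A$, so $\alpha_f(B)\subseteq\CC\setminus\overline{\alpha_g(\CC\setminus B)}$. Applying the homeomorphism $\alpha_g^{-1}$ (which commutes with closure, being a M\"obius map of $\widehat{\CC}$) one obtains $\Phi(B)\subseteq B^\circ$ modulo the two exceptional points $\alpha_g^{-1}(\infty)$ and $\alpha_f^{-1}(\infty)$. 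The involution identity $\Phi^2=\operatorname{id}$ then iterates to $B\subseteq B^\circ$ up to a finite exceptional set; combined with $B$ closed and $\CC$ connected, this would force $B$ to be empty, all of $\CC$, or a finite set, and the hypothesis $0\notin B$ together with mild non-triviality ought to rule out each of these alternatives.

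\textbf{Main obstacle.} The whole argument hinges on $\det M_f\ne 0$ and $\det M_g\ne 0$. When either determinant vanishes the M\"obius picture collapses: if $\det M_g=0$ then $g$ either factors as a product of a univariate polynomial in $z_1$ and one in $z_2$ or depends on only one of the two variables, and the involution interpretation ceases to make sense. This is not merely a technical nuisance. A direct check shows that $f(z_1,z_2)=z_1+z_2+z_1z_2$ and $g(z_1,z_2)=1+z_1$ give $\{f,g\}_{(1,1)}=a_{11}b_{00}-a_{01}b_{10}=0$, while $f$ is $\{-1\}\times\{1\}$-stable (since $f(-1,1)=-1\ne 0$) and $g$ is $(\CC\setminus\{-1\})\times(\CC\setminus\{1\})$-stable (since $g$ vanishes only along $z_1=-1$). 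This flatly contradicts the conclusion of the lemma, so I would expect the paper either to supplement the hypotheses with an explicit non-degeneracy condition on $f$ and $g$ under which the M\"obius involution argument above does go through, or to exhibit essentially this kind of example in order to show that Lemma~\ref{l-wrong} as stated by Hinkkanen is in fact incorrect.
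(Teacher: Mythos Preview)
Your conclusion is correct: the lemma is false, and the paper treats it exactly as you anticipated in your final paragraph---it does not prove the statement but instead exhibits a counterexample. The paper's counterexample is simpler than yours: $f(z_1,z_2)=z_1+z_2$, $g(z_1,z_2)=1$, and $A=B=\{\Im(z)\ge 1\}$, for which $\{f,g\}_{(1,1)}=0$ trivially while both stability hypotheses hold. Your counterexample (with $A=\{-1\}$, $B=\{1\}$) is equally valid and has the minor advantage of using bounded sets, though the paper's choice makes the degree deficiency more transparent since $g$ is constant. The paper also remarks that the lemma \emph{does} hold under an additional degree constraint (e.g.\ both $f$ and $g$ of total degree $2$), which is consistent with your diagnosis that the failure stems from degeneracy of the coefficient matrices.

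One caveat on the exploratory part of your write-up: the M\"obius involution sketch you give for the non-degenerate case is suggestive but not airtight as written. The step from $\Phi(B)\subseteq B^\circ$ ``modulo two exceptional points'' to a contradiction via $\Phi^2=\mathrm{id}$ needs more care---an involution can easily map a closed set into its interior up to finitely many points without forcing $B$ to be trivial (think of $B$ a closed half-plane and $\Phi$ a reflection). Since you correctly pivot to the counterexample anyway, this does not affect your final answer, but the sketch should not be read as a proof of the non-degenerate subcase.
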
 

Lemma \ref{l-wrong} is false, as one can see by 
considering for instance $f(z_1,z_2)=z_1+z_2$, $g(z_1,z_2)=1$, and 
$A=B=\{ \Im(z) \geq 1\}$. However, it holds under additional degree 
constraints (e.g.~if both $f$ ang $g$ have total degree $2$) which are tacitly 
assumed in \cite[Footnote 7]{ruelle4}.
In \cite{hink} Hinkkanen also proposed two possible generalizations 
of Grace's theorem as the following questions.

\begin{question}[Hinkkanen]
Let $A_i$, $i\in [n]$, be closed subsets of $\CC$ that do not 
contain the origin and $f,g \in \CC_{(1^n)}[z_1,\ldots,z_n]$. If $f$ is 
$A_1 \times \cdots \times A_n$-stable and $g$ is 
$(\CC\setminus A_1) \times \cdots \times (\CC\setminus A_n)$-stable then 
$\{f,g\}_{(1^n)} \neq 0$. 
\end{question} 

\begin{question}[Hinkkanen]
Let $C_i$, $i\in [n]$, be closed circular domains and 
$f,g \in \CC_{(1^n)}[z_1,\ldots,z_n]$. If $f$ is 
$C_1\times \cdots \times C_n$-stable and $g$ is 
$(\CC\setminus C_1) \times \cdots \times (\CC \setminus C_n)$-stable then 
$\{f,g\}_{(1^n)} \neq 0$. 
\end{question} 

We will now see that these questions are not true in full generality, but if 
we strengthen the hypothesis slightly in the second question then it is true 
for arbitrary degree polynomials. 

Note that if 
$f,g\in\CC_{\kappa}[z_1,\ldots,z_n]$ then
$$
\{f,g\}_{\kappa}(z):=\sum_{\alpha\le \kappa}(-1)^{\alpha}
f^{(\alpha)}(z)g^{(\kappa-\alpha)}(z)
$$
is a constant function so in this case $\{f,g\}_{\kappa}(z)=\{f,g\}_{\kappa}$ 
for $z\in \CC^n$. Elementary computations also yield the following.

\begin{lemma}\label{l-apro}
Let $\kappa=(\kappa_1,\ldots,\kappa_n)\in\NN^n$, 
$a_i,b_i,c_i,d_i\in \CC$, $a_id_i- b_ic_i=1$, $i\in [n]$, 
$f,g\in\CC_{\kappa}[z_1,\ldots,z_n]$ and set 
\begin{equation*}
\begin{split}
F(z)&=(c_1z_1+d_1)^{\kappa_1}\cdots (c_nz_n+d_n)^{\kappa_n}
f\!\left(\frac{a_1z_1+b_1}{c_1z_1+d_1},\ldots,
\frac{a_nz_n+b_n}{c_nz_n+d_n}\right),\\
G(z)&= (c_1z_1+d_1)^{\kappa_1}\cdots (c_nz_n+d_n)^{\kappa_n}
g\!\left(\frac{a_1z_1+b_1}{c_1z_1+d_1},\ldots,
\frac{a_nz_n+b_n}{c_nz_n+d_n}\right).
\end{split}
\end{equation*}
Then $\{f,g\}_\kappa=\{F,G\}_\kappa$.
\end{lemma}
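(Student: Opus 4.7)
My plan is to reduce to the univariate case and then verify the univariate statement on a set of generators of $SL_2(\CC)$.

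First I would exploit the factorization of the transformation $(f,g) \mapsto (F,G)$ as a composition of $n$ one-variable transformations $\Psi_i$, where $\Psi_i$ applies the M\"obius map $\phi_i$ only to the $i$-th variable (and multiplies by $(c_iz_i+d_i)^{\kappa_i}$) while leaving the others fixed. It therefore suffices to prove $\{F,G\}_\kappa = \{f,g\}_\kappa$ in the special case where $\phi_i$ is trivial for every $i\neq 1$. Writing $\alpha = (\alpha_1,\alpha')$ and $\kappa = (\kappa_1,\kappa')$, and using the constancy of the bracket stated in the paragraph preceding the lemma, one has
$$
\{F,G\}_\kappa \;=\; \sum_{\alpha'\le \kappa'}(-1)^{|\alpha'|}\Bigl\{\tfrac{\partial^{|\alpha'|}}{\partial z^{\alpha'}}F,\ \tfrac{\partial^{|\kappa'-\alpha'|}}{\partial z^{\kappa'-\alpha'}}G\Bigr\}_{\kappa_1}^{(z_1)},
$$
where $\{\,\cdot\,,\,\cdot\,\}_{\kappa_1}^{(z_1)}$ denotes the univariate apolar bracket in $z_1$ (with $z_2,\dots,z_n$ as parameters; the expression is independent of them by the same constancy). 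Since $\partial/\partial z_j$ for $j \neq 1$ commutes both with multiplication by $(c_1z_1+d_1)^{\kappa_1}$ and with the substitution $z_1\mapsto \phi_1(z_1)$, each partial derivative of $F$ in the $z'$-variables is the univariate M\"obius transform (in $z_1$) of the corresponding partial derivative of $f$; similarly for $G$. Consequently, the multivariate identity reduces termwise to the univariate one.

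Second, for the univariate statement I would use that $SL_2(\CC)$ is generated by the translations $z \mapsto z+b$, the diagonal scalings $z \mapsto \lambda^2 z$ (with $a=\lambda$, $d=\lambda^{-1}$, $b=c=0$), and the inversion $z \mapsto -1/z$ (with $a=d=0$, $b=-1$, $c=1$). Translations follow at once from constancy: $F(z)=f(z+b)$ and $G(z)=g(z+b)$ give $\{F,G\}_n=\{f,g\}_n(b)=\{f,g\}_n$. For scalings $F(z) = \lambda^{-n}f(\lambda^2 z)$ one has $F^{(k)}(0) = \lambda^{2k-n}f^{(k)}(0)$, and the factors $\lambda^{2k-n}$ and $\lambda^{n-2k}$ in $F^{(k)}(0)G^{(n-k)}(0)$ cancel summandwise. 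For the inversion, $F(z) = z^n f(-1/z)$ reverses the coefficient sequence with alternating signs, and a direct reindexing in $\sum_k(-1)^k\binom{n}{k}A_kB_{n-k}$ recovers $(-1)^{2n}\{f,g\}_n = \{f,g\}_n$.

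The main obstacle is the inversion case in one variable, where one must track two $(-1)^n$ factors (one from each reversal) and verify they cancel; everything else is essentially bookkeeping. An alternative avoiding this case entirely is to invoke the classical fact that the univariate apolar bracket is, up to a scalar, the $n$-th transvectant of two binary forms of degree $n$, hence an $SL_2$-invariant of classical invariant theory; but the direct generator-based argument above keeps the proof self-contained.
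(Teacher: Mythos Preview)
Your argument is correct and supplies a proof that the paper itself omits: the paper simply states that the lemma follows from ``elementary computations'' and gives no details (see the sentence immediately preceding Lemma~\ref{l-apro} and Remark~\ref{h-apol}). Reducing to one variable at a time and then checking on generators of $SL_2(\CC)$ is a clean and standard way to carry out that computation, and your verification of each generator---translation via the constancy of $\{f,g\}_\kappa(z)$, diagonal scaling via the cancellation $\lambda^{2k-n}\lambda^{n-2k}=1$, inversion via coefficient reversal---is accurate.

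One small wording issue in your reduction step: the individual univariate bracket $\{\partial^{\alpha'}F,\partial^{\kappa'-\alpha'}G\}_{\kappa_1}^{(z_1)}$ is constant in $z_1$ but in general still depends on $z_2,\ldots,z_n$ (take e.g.\ $F=z_2$, $G=z_1$, $\kappa_1=1$); it is only the full sum over $\alpha'$ that is constant in all variables. This does not affect the argument, since for each fixed $z'=(z_2,\ldots,z_n)$ the univariate invariance gives termwise equality between the $(F,G)$ and $(f,g)$ brackets, hence equality of the sums.
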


\begin{remark}\label{h-apol}
Note that Lemma \ref{l-apro} asserts that the functional 
$\{\cdot,\cdot\}_\kappa$ is 
invariant under the action of the 
group of M\"obius transformations normalized as in \eqref{mobius}. For $n=1$ 
this is quite well-known \cite{RS} and motivates the name  
``apolar invariant'' for $\{\cdot,\cdot\}_\kappa$ which is 
classically used in invariant theory, umbral calculus, and the theory of 
algebraic curves \cite{K-R,rota-lect,s-b}. 
\end{remark}

\begin{lemma}\label{case-D}
Let  $f,g \in 
\CC[z_1,\ldots, z_n]$ and suppose that $g$ has  degree
$\kappa\in \NN^n$. If $f$ is $\DD$-stable and $g$ is 
$\CC\setminus \DD$-stable then $\{f,g\}_\kappa \neq 0$.
\end{lemma}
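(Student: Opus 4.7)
The plan is to convert $\{f,g\}_\kappa$ into the value of a $\DD$-stable polynomial at a distinguished corner of the polydisk, and then to argue that this value is nonzero by exploiting closed-disk stability. First, I would use Corollary~\ref{inv} to pass from $g$ to its reverse $\tilde g := I_\kappa g$; since $g$ is $\CC \setminus \DD$-stable and has full degree $\kappa$, the corollary gives that $\tilde g$ is $\overline{\DD}$-stable. A direct computation yields $\tilde g^{(\alpha)}(0) = \frac{\alpha!}{(\kappa-\alpha)!}\, g^{(\kappa-\alpha)}(0)$, which rewrites the apolar invariant in terms of derivatives of $f$ and $\tilde g$ at the origin.

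Next I would apply Corollary~\ref{master-comp}(c) to the bivariate polynomials $P(z,w) := f(z)$ (which is $\DD$-stable) and $Q(z,w) := \tilde g(-z_1 w_1,\ldots,-z_n w_n)$, which is even $\overline{\DD}$-stable on $\overline{\DD}^{2n}$ since $|{-}z_iw_i|\le 1$ for $(z_i,w_i)\in\overline{\DD}^2$ and $\tilde g$ is $\overline{\DD}$-stable. Matching with the expansions prescribed by Corollary~\ref{master-comp}(c) produces, in the $z$-variables alone, the polynomial
\[
R(z) \;:=\; \sum_{\alpha \le \kappa}(-1)^{|\alpha|}\binom{\kappa}{\alpha}^{-1} a_\alpha\, b_{\kappa-\alpha}\, z^\alpha,
\]
where $a_\alpha, b_\alpha$ denote the coefficients of $f$ and $g$. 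By the corollary, $R$ is $\DD$-stable or identically zero. Its constant term $a_0 b_\kappa = f(0)\cdot(\text{leading coefficient of }g)$ is nonzero (by $\DD$-stability of $f$ and the degree hypothesis on $g$), so $R\not\equiv 0$ and $R$ is $\DD$-stable. An easy evaluation gives $R(\mathbf{1}) = \kappa!^{-1}\{f,g\}_\kappa$, where $\mathbf{1} := (1,\ldots,1)$, reducing the task to showing $R(\mathbf{1}) \neq 0$.

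The main obstacle is that $\mathbf{1}$ lies on the distinguished boundary $\partial\DD^n$ rather than in the open polydisk, so $\DD$-stability of $R$ does not by itself preclude $R(\mathbf{1}) = 0$. To remove this obstacle I would upgrade $R$ to $\overline{\DD}$-stability, exploiting the stronger $\overline{\DD}$-stability of $Q$ together with the approximation $f \rightsquigarrow f_\epsilon(z) := f(z/(1+\epsilon))$, which is $(1+\epsilon)\DD$-stable and hence $\overline{\DD}$-stable. Invoking the closed-disk companion of Corollary~\ref{master-comp}(c) — which follows from the closed-disk counterpart of Theorem~\ref{open-disk} proved in \cite{BB-I} — yields polynomials $R_\epsilon$ that are $\overline{\DD}$-stable, so $R_\epsilon(\mathbf{1}) \neq 0$; a Hurwitz-limit argument (Theorem~\ref{mult-hur}) then propagates this non-vanishing across $\epsilon \to 0^+$ to give $R(\mathbf{1}) \neq 0$, and hence $\{f,g\}_\kappa \neq 0$. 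The delicate part of the argument is precisely this boundary upgrade: all of the difficulty of Grace's classical theorem resurfaces in passing from open $\DD$-stability of $R$ to non-vanishing at the corner $\mathbf{1}$.
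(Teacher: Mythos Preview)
Your overall strategy coincides with the paper's: reverse $g$ via $I_\kappa$ to obtain a $\overline{\DD}$-stable polynomial, feed the pair into Corollary~\ref{master-comp}(c), and identify $\{f,g\}_\kappa$ as a value of the resulting $\DD$-stable polynomial. The computation $R(\mathbf 1)=\kappa!^{-1}\{f,g\}_\kappa$ is correct, and $a_0b_\kappa\neq 0$ does rule out $R\equiv 0$.

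The gap is precisely where you flag it, and your proposed repair does not close it. Two problems: (i) you invoke a ``closed-disk companion'' of Corollary~\ref{master-comp}(c) that is never stated or proved in the paper, and which does not follow from the open version by naive scaling (the rescaling introduces an extra factor $(1+\eta)^{|\alpha|}$ that spoils the convolution); (ii) even granting $\overline{\DD}$-stability of each $R_\epsilon$, the Hurwitz argument cannot deliver $R(\mathbf 1)\neq 0$, because $\mathbf 1\in\partial\DD^n$ lies outside any open domain on which Theorem~\ref{mult-hur} applies. A sequence of nonzero numbers $R_\epsilon(\mathbf 1)$ may well converge to $0$; Hurwitz says nothing about boundary points. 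In fact $R_\epsilon(\mathbf 1)=R\big((1+\epsilon)^{-1}\mathbf 1\big)$, which you already know is nonzero from the $\DD$-stability of $R$ itself---so the detour through $f_\epsilon$ and a closed-disk corollary is circular and still leaves you stuck at the boundary.

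The paper's fix is simpler and avoids both issues. Since $\tilde g$ is $\overline{\DD}$-stable, compactness of $\overline{\DD}^{\,n}$ gives $\delta>0$ with $\tilde g$ nonvanishing on $(1+\delta)\overline{\DD}^{\,n}$. Build the dilation into the \emph{second} input before applying Corollary~\ref{master-comp}(c): take $Q_\delta(z,w)=\tilde g\big(-(1+\delta)z_1w_1,\ldots,-(1+\delta)z_nw_n\big)$, which is genuinely $\DD$-stable. The output
\[
F(z,w)=\sum_{\alpha\le\kappa}(-1)^{|\alpha|}\binom{\kappa}{\alpha}^{-1}a_\alpha b_{\kappa-\alpha}(1+\delta)^{|\alpha|}z^\alpha w^\alpha
\]
is then $\DD$-stable, and evaluating at the \emph{interior} point $z=w=\big((1+\delta)^{-1/2},\ldots,(1+\delta)^{-1/2}\big)\in\DD^{2n}$ gives $\kappa!^{-1}\{f,g\}_\kappa\neq 0$ directly. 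No closed-disk corollary, no Hurwitz, no boundary evaluation.
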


\begin{proof}
 Let
$$
f(z)=\sum_{\alpha} a_\alpha z^\alpha \mbox{ and } 
g(z)=\sum_{\alpha} b_\alpha
z^\alpha. 
$$
Then $h(z):= \sum_{\alpha} (-1)^\alpha b_{\kappa - \alpha}
z^\alpha$ is $\overline{\DD}$-stable by Corollary \ref{inv}. By compactness 
there is $\epsilon >0$ such that 
$|h(z)|>\epsilon$ for $z \in \overline{\DD}^n$. This means that there is 
$\delta>0$ such that 
$\sum_{\alpha} (-1)^\alpha b_{\kappa - \alpha}(1+\delta)^{|\alpha|}
z^\alpha$ is $\DD$-stable. Then by 
applying Corollary \ref{master-comp} (c) to the 
$\DD$-stable polynomials 
$$
\sum_{\alpha} a_\alpha z^\alpha w^\alpha \mbox{ and } 
\sum_{\alpha} (-1)^\alpha b_{\kappa - \alpha}(1+\delta)^{|\alpha|}
z^\alpha w^\alpha
$$
we deduce that the polynomial
$$
F(z,w):=\sum_{\alpha} \frac{(-1)^\alpha a_\alpha 
b_{\kappa - \alpha}(1+\delta)^{|\alpha|}}{\binom \kappa \alpha}
z^\alpha w^\alpha 
$$
is $\DD$-stable or identically zero. The assumptions on $f$ and $g$ 
guarantee that 
$a_0b_{\kappa} \neq 0$, so 
$F$ is $\DD$-stable and
$$
\{f,g\}_\kappa
= \kappa!F\!\big( (1+\delta)^{-1/2}, \ldots, (1+\delta)^{-1/2}\big) \neq 0, 
$$
as claimed.
\end{proof}


We find it most natural to state two apolarity theorems: one for discs and 
exterior of discs (Theorem \ref{appo}) and one for half-planes 
(Theorem \ref{ap-halfplane}). 

\begin{theorem}\label{appo}
Let $C_i$, $i\in [n]$, be open  discs or exterior of discs and 
let $f,g \in \CC_\kappa[z_1, \ldots, z_n]$. Suppose that   
\begin{itemize}
\item[(i)] $f$ is 
$ C_1\times \cdots \times C_n$-stable and $\deg_{z_j}(f)=\kappa_j$ 
whenever $C_j$ is the exterior of a disk, and
\item[(ii)] $g$ is 
$(\CC\setminus C_1) \times \cdots \times (\CC \setminus C_n)$-stable and 
$\deg_{z_j}(f)=\kappa_j$ whenever $C_j$ is a disk. 
\end{itemize}
Then $\{f,g\}_\kappa \neq 0$.
\end{theorem}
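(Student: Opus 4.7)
The plan is to reduce to the unit disk case already handled by Lemma~\ref{case-D}, using Lemma~\ref{l-apro} to transport the apolar bracket. For each $j\in[n]$ I pick a M\"obius transformation $\phi_j(\zeta)=(a_j\zeta+b_j)/(c_j\zeta+d_j)$ normalized by $a_jd_j-b_jc_j=1$ and sending $\DD$ onto $C_j$; when $C_j$ is a disk I take $\phi_j$ to be affine (so $c_j=0$), whereas when $C_j$ is the exterior of a disk the pole $-d_j/c_j$ necessarily lies in $\DD$ and $\phi_j(\infty)=a_j/c_j$ lies in the open disk $\CC\setminus\overline{C_j}$. Let $F$ and $G$ denote the images of $f$ and $g$ under the joint transformation $\Phi_\kappa$ built from these $\phi_j$'s as in Lemma~\ref{translate}; by Lemma~\ref{l-apro} one has $\{f,g\}_\kappa=\{F,G\}_\kappa$, so it suffices to show that Lemma~\ref{case-D} applies to the pair $(F,G)$.

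For the stability of $F$: hypothesis~(i) says $f\in\NV_\kappa(C_1,\ldots,C_n)$ (the degree requirement in $\NV_\kappa$ is precisely that $\deg_{z_j}(f)=\kappa_j$ when $C_j$ is non-convex), so Lemma~\ref{translate} applied with $D_i=\DD$ gives $F\in\NV_\kappa(\DD,\ldots,\DD)$, i.e.,~$F$ is $\DD$-stable. For the stability of $G$: any $z\in(\CC\setminus\DD)^n$ satisfies $c_jz_j+d_j\ne 0$, because by construction every pole $-d_j/c_j$ lies in $\DD\cup\{\infty\}$; moreover $\phi_j(z_j)\in\CC\setminus C_j$ for each $j$, so hypothesis~(ii) gives $g(\phi_1(z_1),\ldots,\phi_n(z_n))\ne 0$ and hence $G(z)\ne 0$.

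The remaining requirement of Lemma~\ref{case-D} is that $G$ have degree $\kappa$, and this is the technical heart of the argument. When $C_j$ is a disk, $\phi_j$ is affine so $\deg_{z_j}(G)=\deg_{z_j}(g)=\kappa_j$ by hypothesis~(ii). When $C_j$ is the exterior of a disk, a direct expansion in $z_j$ shows the leading coefficient of $G$ equals a nonzero scalar multiple of $g$ evaluated with $a_j/c_j$ in slot $j$ and $\phi_k(z_k)$ in slot $k\ne j$; since $a_j/c_j\in\CC\setminus\overline{C_j}\subset\CC\setminus C_j$ and $g$ is $\prod_k(\CC\setminus C_k)$-stable, this coefficient is not identically zero, giving $\deg_{z_j}(G)=\kappa_j$. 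Lemma~\ref{case-D} then yields $\{F,G\}_\kappa\ne 0$, hence $\{f,g\}_\kappa\ne 0$. I expect this degree bookkeeping to be the main subtlety: the asymmetric degree hypotheses in the statement are exactly what is needed so that, after the M\"obius swap sending $C_j\to\DD$, neither $F$'s stability (controlled via $\NV_\kappa$) nor $G$'s full degree (needed by Lemma~\ref{case-D}) collapses.
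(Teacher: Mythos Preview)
Your argument is correct and follows essentially the same route as the paper: choose M\"obius maps sending $\DD$ to $C_j$, transport both $f$ and $g$ via $\Phi_\kappa$, invoke Lemma~\ref{l-apro} to preserve the bracket, and finish with Lemma~\ref{case-D}. The paper's proof is terser---it appeals to Lemma~\ref{translate} for the degree condition on $G$ and justifies $\CC\setminus\DD$-stability of $G$ with the single remark that $-d_i/c_i\notin\CC\setminus\DD$ since no $C_i$ is a half-plane---whereas you make the underlying choices and verifications explicit. In particular, your stipulation that $\phi_j$ be affine when $C_j$ is a disk is exactly what is needed for the pole claim, and your case-by-case degree check (affine case versus exterior-of-disk case via the value $a_j/c_j\in\CC\setminus\overline{C_j}$) spells out the content that the paper packages under Lemma~\ref{translate}. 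One minor wording issue: the ``nonzero scalar multiple'' in your exterior-of-disk case is really $c_j^{\kappa_j}$ times the reduced $\Phi$-transform of $g$ with $a_j/c_j$ plugged in, so the prefactor $\prod_{k\ne j}(c_kz_k+d_k)^{\kappa_k}$ is still present; but this does not affect the nonvanishing conclusion.
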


\begin{proof} 
For $i\in [n]$ let $\Phi_i$ be a M\"obius transformation 
$(a_i z + b_i)/(c_iz+d_i)$ as in \eqref{mobius}  
for which $\Phi_i(\DD) = C_i$. By Lemma \ref{translate} the polynomial 
$$
F(z):= (c_1z_1+d_1)^{\kappa_1}\cdots (c_nz_n+d_n)^{\kappa_n} 
f(\Phi_1(z_1), \ldots, \Phi_n(z_n))
$$
is $\DD$-stable. 
Note that $-d_i/c_i \notin {\CC\setminus \DD}$ since none of the $C_i$'s is a 
half-plane. The polynomial 
$$
G(z):=  (c_1z_1+d_1)^{\kappa_1}\cdots (c_nz_n+d_n)^{\kappa_n}g(\Phi_1(z_1), 
\ldots, \Phi_n(z_n))
$$
is therefore $\CC\setminus\DD$-stable.  By  Lemma~\ref{translate} the degree 
of $g$ is precisely $\kappa$ so Lemma~\ref{case-D} applies and by 
Lemma \ref{l-apro} we get 
$
\{f,g\}_\kappa= \{F,G\}_\kappa \neq 0 
$.
\end{proof}

The {\em homogeneous part} of a polynomial $f \in \CC[z_1,\ldots, z_n]$ is 
the polynomial $f_H$ obtained by extracting the terms of maximum total degree, 
i.e., 
$$
f_H(z_1, \ldots, z_n)= \lim_{t \rightarrow \infty} 
t^{-d}f(tz_1, \ldots, tz_n), 
$$
where $d= \max\{|\alpha|: \partial^\alpha f / \partial z^\alpha \neq 0\}$. 

\begin{lemma}\label{hom-part}
Suppose $f \in \CC[z_1,\ldots, z_n]$ is $\bH_0$-stable and $i\in [n]$. Then 
$$
\left(\frac {\partial f} {\partial z_i}\right)_H =   
\frac {\partial f_H} {\partial z_i}.
$$
\end{lemma}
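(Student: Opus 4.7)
The plan is to reduce the lemma to the single assertion that $\deg(\partial f/\partial z_i) = d-1$ whenever $\partial f/\partial z_i \not\equiv 0$, where $d := \deg f$. If $\partial f/\partial z_i \equiv 0$, then $f$ and $f_H$ are both independent of $z_i$ and the identity reads $0=0$, so we may assume $h := \partial f/\partial z_i \not\equiv 0$ and $d \geq 1$. Positive real scalings preserve $\bH_0$-stability and $s^{-d}f(sz) \to f_H(z)$ uniformly on compacts as $s \to \infty$, so Theorem~\ref{mult-hur} gives that $f_H$ is $\bH_0$-stable of degree $d$; the same device applied to $h$, which is $\bH_0$-stable by Remark~\ref{gl}, yields that $h_H$ is $\bH_0$-stable of degree $e := \deg h \leq d-1$. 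Once $e = d-1$ is established, the degree-$(d-1)$ piece of $\partial f/\partial z_i$, namely $(\partial f/\partial z_i)_H$, is precisely what comes from differentiating the degree-$d$ part $f_H$ with respect to $z_i$, giving the claimed identity $(\partial f/\partial z_i)_H = \partial f_H/\partial z_i$.

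For the key step I would introduce the auxiliary polynomial $F(z,z_{n+1}) := f(z) + z_{n+1}h(z)$ on $\CC^{n+1}$, which is $\bH_0$-stable by Remark~\ref{gl}. Since $h$ is stable, $h(z) \neq 0$ for every $z \in \bH_0^n$; the condition $F \neq 0$ on $\bH_0^{n+1}$ then forces $-f(z)/h(z) \notin \bH_0$, i.e.\ $\Im(f/h)(z) \geq 0$ on $\bH_0^n$. The holomorphic function $f/h$ on $\bH_0^n$ cannot be constant (otherwise $\deg f = \deg h$, contradicting $e<d$), so the open mapping theorem upgrades the weak inequality to $(f/h)(\bH_0^n) \subseteq \bH_0$.

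The main obstacle is converting this half-plane constraint into a degree bound via a sector calculation. I would probe the map at the test point $w := (i,\ldots,i) \in \bH_0^n$, considering $t = re^{i\phi}$ for which $tw \in \bH_0^n$ precisely when $\phi \in (-\pi/2,\pi/2)$. Expanding in homogeneous components,
\[
\frac{f(tw)}{h(tw)} \;=\; \frac{t^d f_H(w) + O(t^{d-1})}{t^e h_H(w) + O(t^{e-1})} \;\sim\; t^{d-e}\gamma_0 \qquad (|t|\to\infty),
\]
with $\gamma_0 := f_H(w)/h_H(w) \in \CC \setminus \{0\}$ (nonzero since $f_H,h_H$ are stable and $w\in\bH_0^n$). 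Letting $r\to\infty$ in $\Im(f(tw)/h(tw)) \geq 0$ yields $\sin\bigl((d-e)\phi + \arg\gamma_0\bigr) \geq 0$ for every $\phi\in(-\pi/2,\pi/2)$. Since $\sin$ is non-negative only on intervals of length at most $\pi$, and the image of the length-$\pi$ $\phi$-interval under $\phi \mapsto (d-e)\phi + \arg\gamma_0$ has length $(d-e)\pi$, this forces $d-e \leq 1$. Combined with $e \leq d-1$, we conclude $e = d-1$, as needed.
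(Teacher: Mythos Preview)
Your proof is correct and follows essentially the same route as the paper: reduce to the degree claim $\deg(\partial_i f)=d-1$, use the $\bH_0$-stability of $f+z_{n+1}\partial_i f$ from Remark~\ref{gl} to obtain $\Im(f/\partial_i f)\ge 0$ on $\bH_0^n$, and then read off the bound $d-e\le 1$ from the asymptotic behavior of this quotient along a ray to infinity. The one minor implementation difference is that the paper diagonalizes to the univariate polynomials $p(z)=f(z,\ldots,z)$, $q(z)=\partial_i f(z,\ldots,z)$ and invokes Lemma~\ref{phase} to see that their leading coefficients $f_H(1,\ldots,1)$, $h_H(1,\ldots,1)$ are nonzero, whereas you use Hurwitz' theorem to get $f_H,h_H$ stable and then evaluate at $(i,\ldots,i)$---a slightly more self-contained way to secure the nonvanishing of the leading terms.
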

\begin{proof}
Suppose that $f$ has total degree $d$. Clearly, it is enough to prove that 
either ${\partial f}/ {\partial z_i}$ is identically zero  or  its total 
degree is $d-1$. 

Assume that ${\partial f} /{\partial z_i} \not\equiv 0$ and its 
total degree is $d' < d-1$. By Remark \ref{gl} the polynomial 
$f + z_{n+1}{\partial f} /{\partial z_i}$ is $\bH_0$-stable. Consider now the 
univariate polynomials 
\begin{equation*}
\begin{split}
&p(z)=f(z,\ldots, z)= f_H(1, \ldots,1)z^d + \cdots,  \\
&q(z)= \frac {\partial f} {\partial z_i}(z,\ldots, z)
= \left(\frac {\partial f} {\partial z_i}\right)_H(1, \ldots,1)z^{d'}+ \cdots,
\end{split} 
\end{equation*}
These polynomials are of degree $d$ and $d'$ respectively, since the leading 
coefficients are non-zero by Lemma \ref{phase}. Solving for $z_{n+1}$ in 
$p(z)+z_{n+1}q(z)=0$ we see that 
$\Im(p(z)/q(z)) \geq 0$ whenever $\Im(z) >0$. This is a contradiction since 
$p(z)/q(z)= C z^{d-d'}+ o(z^{d-d'})$ when $z \rightarrow \infty$, where 
$C \neq 0$ and $d-d' \geq 2$. 
\end{proof}

\begin{theorem}\label{ap-halfplane}
Let $C_1$ and $C_2$ be two open half-planes with non-empty intersection, 
$\kappa\in\NN^n$, and $f,g \in \CC_\kappa[z_1, \ldots, z_n]$. 
If $f$ is $C_1$-stable, $g$ is $C_2$-stable, and 
$\kappa \leq \alpha +\beta$ for some 
$\alpha \in \supp(f), \beta \in \supp(g)$, then 
$\{f,g\}_\kappa \neq 0$. 
\end{theorem}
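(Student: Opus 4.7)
The plan is to follow the strategy of Theorem \ref{appo}: use Lemma \ref{l-apro} to normalize via M\"obius, then invoke a master composition theorem (Corollary \ref{master-comp}) to build an auxiliary bivariate polynomial whose stability and non-vanishing at a strategic point encode the apolar invariant. First I would apply Lemma \ref{l-apro} to a common rotation of $f$ and $g$, normalizing $C_1 = \bH_0$; the hypothesis $C_1 \cap C_2 \neq \emptyset$ then forces $C_2 = \bH_\theta$ for some $\theta \in [0, 2\pi) \setminus \{\pi\}$.

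Next, to bring $g$ into the same stability half-plane as $f$, I introduce $2n$ auxiliary variables $w = (w_1, \ldots, w_n)$ and set $\tilde g(w) := g(e^{-i\theta}w)$; since $e^{i\theta} \cdot \bH_\theta = \bH_0$, the polynomial $\tilde g$ is $\bH_0$-stable. Viewing $f(z)$ and $\tilde g(w)$ as $\bH_0$-stable bivariate polynomials in $(z, w) \in \CC^{2n}$ of the form $\sum_\alpha \binom{\kappa}{\alpha} P_\alpha(w) z^\alpha$ and $\sum_\alpha \binom{\kappa}{\alpha} Q_\alpha(z) w^\alpha$ respectively (with $P_\alpha(w) = a_\alpha/\binom{\kappa}{\alpha}$ and $Q_\alpha(z) = e^{-i\theta|\alpha|} b_\alpha/\binom{\kappa}{\alpha}$), Corollary \ref{master-comp}(a) produces a scalar $c$ (the $\alpha \leftrightarrow \kappa-\alpha$ convolution) which is either $\bH_0$-stable (non-zero) or identically zero. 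The support hypothesis $\kappa \le \alpha + \beta$ for some $\alpha \in \supp(f)$, $\beta \in \supp(g)$, combined with the structural constraints on supports of stable polynomials (cf.~Lemma \ref{maxsupport} and the jump-system property invoked in Lemma \ref{multi-symbol}), will rule out the identically-zero case, so $c \neq 0$. A Hurwitz-continuity argument (Theorem \ref{mult-hur}), in the spirit of the $(1+\delta)$-perturbation in Lemma \ref{case-D}, will then identify $c$ with a non-zero scalar multiple of $\{f, g\}_\kappa$.

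The hard part will be this last identification: Corollary \ref{master-comp}(a) pairs $\alpha$ with $\kappa-\alpha$ exactly as in $\{f,g\}_\kappa$ but lacks the alternating $(-1)^{|\alpha|}$ factor, so matching the rotation's phase factors $e^{-i\theta|\cdot|}$ against the $(-1)^{|\cdot|}$ signs in the apolar bracket requires careful bookkeeping (and possibly supplementing the rotation with a suitable further normalization permitted by Lemma \ref{l-apro}). The exclusion $\theta \neq \pi$---equivalent to the non-empty intersection hypothesis---is precisely what makes $\tilde g$ and the ensuing composition well-defined; the boundary case $\theta = \pi$ corresponds to opposite half-planes, which is the classical setup of Grace's apolarity theorem and is handled by Theorem \ref{appo} in the limiting disc formulation.
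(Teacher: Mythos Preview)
Your proposal has a genuine gap at exactly the point you flag as ``the hard part,'' and the fix you suggest will not close it. When you rotate $g$ alone via $\tilde g(w)=g(e^{-i\theta}w)$ and feed $f(z)$ and $\tilde g(w)$ into Corollary~\ref{master-comp}(a), the output is a single scalar of the form
\[
c \;=\; \sum_{\alpha\le\kappa}\binom{\kappa}{\alpha}^{-1} a_\alpha\, b_{\kappa-\alpha}\, e^{-i\theta|\kappa-\alpha|},
\]
whereas $\{f,g\}_\kappa$ carries the factors $(-1)^{|\alpha|}$ in place of $e^{-i\theta|\kappa-\alpha|}$. These agree (up to a global constant) only when $e^{i\theta}=-1$, i.e.\ precisely the excluded case $\theta=\pi$. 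No further normalization via Lemma~\ref{l-apro} can repair this, because that lemma requires applying the \emph{same} M\"obius map to $f$ and $g$; your rotation of $g$ alone already leaves the apolar invariant behind.

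The paper's argument avoids this obstacle by two moves you are missing. First, the alternating sign is produced not by a rotation $e^{-i\theta}$ but by the involution $z\mapsto -z$ inside $g$: expanding $g(-z-w+i\varepsilon)$ in $w$ gives exactly the $(-1)^{\kappa-\alpha}$ needed. Second---and this is the crucial idea---since $g(-\,\cdot\,)$ is stable only on the \emph{opposite} half-plane, the paper introduces a translation by $i\varepsilon$ (made possible by the overlap $C_1\cap C_2$) so that $f(z+w-i\varepsilon)$ and $g(-z-w+i\varepsilon)$ are \emph{simultaneously} $\bH_0$-stable in the doubled variables $(z,w)$. Corollary~\ref{master-comp}(a) then yields a genuine $2n$-variable polynomial $F(z,w)$, and $\{f,g\}_\kappa$ is recovered by evaluating $F$ at the interior point $(i\varepsilon,i\varepsilon)\in\bH_0^{2n}$, not at the origin. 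Your setup, with $f$ depending only on $z$ and $\tilde g$ only on $w$, collapses $F$ to a constant and loses this degree of freedom.

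Finally, your plan for ruling out the identically-zero case via Lemma~\ref{maxsupport} and jump-system arguments is off target: those results control supports of stable polynomials coordinate-wise, but the cancellation you must exclude is among terms of the same \emph{total} degree. The paper instead uses the same-phase property of homogeneous parts (Lemma~\ref{phase}) together with Lemma~\ref{hom-part} to show the leading homogeneous parts of the summands $G_\alpha$ cannot cancel.
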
  

\begin{proof}
By an affine transformation of the variables we may assume that there is an 
$\epsilon >0$ 
such that $f(z-i\varepsilon)$ and $g(-z+i\varepsilon)$ are $\bH_0$-stable, 
where $\varepsilon=(\epsilon,\ldots, \epsilon)$. Then so are the 
$2n$-variable polynomials $f(z+w-i\varepsilon)$ and $g(-z-w+i\varepsilon)$ and 
by Corollary \ref{master-comp} also the polynomial 
$$
F(z,w)=\sum_{\alpha \leq \kappa}(-1)^{\kappa-\alpha}
f^{(\alpha)}(w-i\varepsilon)g^{(\kappa-\alpha)}(-z+i\varepsilon)
$$
unless it is identically zero. If it is not identically zero then the 
conclusion of the theorem follows by setting $z=w=i\varepsilon$. To complete 
the proof we show that $F(z,w)$ is not identically zero. Let 
$G_\alpha(z,w)=(-1)^{\kappa-\alpha}f^{(\alpha)}(w-i\varepsilon)
g^{(\kappa-\alpha)}(-z+i\varepsilon)$. This polynomial is $\bH_0$-stable or 
identically zero (by Remark \ref{gl}) and by Lemma \ref{hom-part} and 
Lemma \ref{phase} all non-zero coefficients in its homogeneous part have the 
same phase as those in the homogeneous part of 
$f(w-i\varepsilon)g(-z+i\varepsilon)$. By the assumptions on the supports of 
$f$ and $g$ there is an $\alpha$ such that $G_\alpha(z,w)\neq 0$ so 
$\lim_{t\rightarrow \infty} t^{-d-e+|\kappa|} F(tz,tw) \neq 0$, where $d$ and 
$e$ are the total degrees of 
$f$ and $g$, respectively. In particular, $F(z,w)$ is not identically zero.  
\end{proof}


\section{Hard Lieb-Sokal Lemmas}\label{ss-91}
In \cite[Proposition 2.2]{LS} Lieb and Sokal proved that the operation that 
replaces one variable with differentiation with respect to another variable 
preserves weak Hurwitz stability. This result played a key role in the study 
of 
Laplace transforms of Lee-Yang measures and the extensions of Newman's 
strong Lee-Yang theorem obtained in \cite{LS}, see \S \ref{ss-newman}. 
It was also an essential ingredient in proving the 
sufficiency part of the classification theorems of \cite{BB-I}.

The Lieb-Sokal result is a 
``soft'' (transcendental/unbounded degree) result since it amounts to saying 
that the linear operator on $\CC[z_1, \ldots, z_n]$ acting on 
monomials as 
\begin{equation}\label{ls-soft}
z^{\alpha} \mapsto (-1)^{\alpha_1}
\cdot\frac{\partial^{\alpha_1}(z_2^{\alpha_2}z_3^{\alpha_3}\cdots 
z_n^{\alpha_n})}
{\partial z_2^{\alpha_1}},\quad 
\alpha=(\alpha_1,\ldots,\alpha_n)\in\NN^n,
\end{equation}
(which one may schematically represent as 
``$z_1\mapsto -\partial/\partial z_2$'') preserves ($\bH_0$-)stability, 
see Theorem \ref{th-LS} in \S \ref{ss-newman}.

By considering certain linear operators on finite-dimensional polynomial spaces
we can establish ``hard'' versions of Lieb-Sokal's result. 

\begin{lemma}\label{strong-LS}
Let $n,d \in \NN$ with $n \geq 2$ and let $\kappa \in \NN^n$ be such that 
$\kappa_1=\kappa_2=d$. Define a linear operator 
$T_d : \CC_\kappa[z_1,\ldots, z_n] \rightarrow \CC_\kappa[z_1,\ldots, z_n]$ by 
$$
T_d(f) =  \frac{1}{d!}\sum_{k=0}^d\frac{\partial^{d} f}
{\partial z_1^k \partial z_2^{d-k}}. 
$$
Then $T_d$ preserves $\bH_\te$-stability for any $0\le \te <2\pi$.
\end{lemma}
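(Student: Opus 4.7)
The plan is to invoke Theorem~\ref{open-halfplane}: to establish that $T_d$ preserves $\bH_\theta$-stability it is enough to exhibit its algebraic symbol $T_d[(z+w)^\kappa]$ and verify that this symbol is itself $\bH_\theta$-stable. Concretely, using the definition in \eqref{stablesymb}, I would apply $T_d$ to the factored expression
$$
(z+w)^\kappa=(z_1+w_1)^d(z_2+w_2)^d\prod_{i=3}^n(z_i+w_i)^{\kappa_i},
$$
noting that the differential operator $T_d$ touches only the first two factors.

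First I would compute the derivative building blocks
$$
\frac{\partial^k (z_1+w_1)^d}{\partial z_1^k}=\frac{d!}{(d-k)!}(z_1+w_1)^{d-k},\qquad
\frac{\partial^{d-k} (z_2+w_2)^d}{\partial z_2^{d-k}}=\frac{d!}{k!}(z_2+w_2)^k,
$$
and substitute these into the definition of $T_d$. The factorials collapse to $\binom{d}{k}$, and the binomial theorem identifies the sum as a single $d$-th power, yielding
$$
T_d[(z+w)^\kappa]=(z_1+z_2+w_1+w_2)^d\prod_{i=3}^n(z_i+w_i)^{\kappa_i}.
$$
This is the key step: the operator $T_d$ is rigged precisely so that its symbol splits as a product of linear factors.

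Second, I would verify $\bH_\theta$-stability of the symbol. Since $\Im(e^{i\theta}\,\cdot)$ is $\RR$-linear and positive on $\bH_\theta$, each factor $z_i+w_i$ lies in $\bH_\theta$ whenever $z_i,w_i\in\bH_\theta$, and the sum $z_1+z_2+w_1+w_2$ lies in $\bH_\theta$ whenever all four summands do. A product of nonzero complex numbers is nonzero, so the symbol does not vanish on $\bH_\theta^{2n}$. Theorem~\ref{open-halfplane}(b) then concludes that $T_d$ preserves $\bH_\theta$-stability.

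There is no real obstacle here: the proof is essentially a direct computation exploiting the perfect match between the coefficients in $T_d$ and the multinomial expansion of $(z_1+z_2)^d$. The only step demanding care is the combinatorial bookkeeping when collapsing the sum over $k$, and the only conceptual point is recognizing that the factorization of the symbol into linear $\bH_\theta$-stable factors is what the statement of the lemma is really encoding.
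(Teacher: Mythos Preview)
Your proposal is correct and follows essentially the same approach as the paper: compute the algebraic symbol $T_d[(z+w)^\kappa]$, recognize via the binomial theorem that it equals $(z_1+z_2+w_1+w_2)^d\prod_{i\ge 3}(z_i+w_i)^{\kappa_i}$, observe that this is $\bH_\theta$-stable, and invoke Theorem~\ref{open-halfplane}. The only difference is that you spell out in slightly more detail why the symbol is $\bH_\theta$-stable, which the paper leaves as ``clearly''.
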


\begin{proof}
The symbol of $T_{d}$, i.e., the $2n$-variable polynomial 
$T_{d}\!\left[(z+w)^\kappa \right]$ is given by 
\begin{multline*}
\frac{1}{d!}(z_3+w_3)^{\kappa_3}\cdots (z_n+w_n)^{\kappa_n}
\sum_{k=0}^d \frac{d!}{(d-k)!} \frac{d!}{k!} 
(z_1+w_1)^{d-k}(z_2+w_2)^{k}\\
=(z_1+z_2+w_1+w_2)^d(z_3+w_3)^{\kappa_3}\cdots (z_n+w_n)^{\kappa_n}
\end{multline*}
which is $\bH_\te$-stable. The conclusion follows from 
Theorem \ref{open-halfplane}.
\end{proof}

\begin{remark}\label{rrr}
An interesting property of $T_{d}$ is that $T_{d}(f)$ is actually 
a polynomial in the $n-1$ variables $z_1+z_2, z_3, \ldots, z_n$ for any 
$f\in \CC_\kappa[z_1,\ldots, z_n]$.
Indeed, let $F(z_1,z_2,\ldots,z_n)=T_{d}(f)(z_1,z_2,\ldots,z_n)$. 
It is straightforward to show that 
$$
\frac{\partial}{\partial t}F(z_1+t,z_2-t,z_3,\ldots,z_n)=0,\quad t\in\CC,
$$
and by letting $t=z_2$ we get $F(z_1+t,z_2-t,\ldots,z_n)
=F(z_1+z_2,0,\ldots,z_n)$.
\end{remark}

Using Lemma \ref{strong-LS} and Remark \ref{rrr} we deduce the 
following ``hard'' result that 
substantially improves \eqref{ls-soft} when the top degree is specified.

\begin{corollary}\label{cor-strong}
Let $n,d \in \NN$ with $n \geq 2$ and let $\kappa \in \NN^n$ be such that 
$\kappa_1=\kappa_2=d$. Define linear operators 
$S_d, R_d : \CC_\kappa[z_1,\ldots, z_n] \rightarrow 
\CC_\kappa[z_1,\ldots, z_n]$ by
$$
S_{d}\!\left[\sum_{k=0}^{d}z_1^kQ_k(z_2,\ldots,z_n)\right] =
\frac{1}{d!}\sum_{k=0}^{d}k!\left(\frac{\partial}{\partial z_2}\right)^{d-k}
Q_k(z_2,\ldots, z_n)
$$
and
$$
R_{d}\!\left[\sum_{k=0}^{d}z_1^kQ_k(z_2,\ldots,z_n)\right]
=
\frac{1}{d!}\sum_{k=0}^{d}(-1)^{k}(d-k)!
\left(\frac{\partial}{\partial z_2}\right)^{k}
Q_k(z_2,\ldots, z_n).
$$
Then $S_{d}$ and $R_{d}$ preserve ($\bH_0$-)stability up to degree $\kappa$.
\end{corollary}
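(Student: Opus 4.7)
The strategy is to realize $S_d$ and $R_d$ in terms of the operator $T_d$ from Lemma~\ref{strong-LS}, using Remark~\ref{rrr} in an essential way.

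For $S_d$, the key step is to establish the identity
\[
T_d(f)(z_1,z_2,z_3,\ldots,z_n) = S_d[f](z_1+z_2,z_3,\ldots,z_n), \qquad (\star)
\]
valid for every $f\in\CC_\kappa[z_1,\ldots,z_n]$. Writing $f=\sum_{k=0}^d z_1^k Q_k(z_2,\ldots,z_n)$ and using $\partial_{z_1}^k(z_1^j)|_{z_1=0}=k!\,\delta_{jk}$, a direct term-by-term computation gives
$T_d(f)(0,z_2,z_3,\ldots,z_n) = \frac{1}{d!}\sum_{k=0}^d k!\,\partial_{z_2}^{d-k}Q_k$, which is exactly $S_d[f](z_2,z_3,\ldots,z_n)$. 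Remark~\ref{rrr} then lets us replace $(z_1,z_2)$ by $(0,z_1+z_2)$ on the left-hand side, yielding $(\star)$. Stability of $S_d[f]$ is now immediate: given $(w,z_3,\ldots,z_n)\in\bH_0^{n-1}$, since $\bH_0+\bH_0=\bH_0$ we may write $w=w/2+w/2$ with $w/2\in\bH_0$ and apply $(\star)$ together with Lemma~\ref{strong-LS} to conclude
$S_d[f](w,z_3,\ldots,z_n) = T_d(f)(w/2,w/2,z_3,\ldots,z_n)\ne 0$. Since $S_d[f]$ is independent of $z_1$, this is $\bH_0$-stability in all $n$ variables.

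For $R_d$, I would reduce to the $S_d$ case via the reversal $f\mapsto\tilde f$, where
$\tilde f(z_1,\ldots,z_n) := z_1^d f(-1/z_1,z_2,\ldots,z_n)$.
Because the M\"obius transformation $\zeta\mapsto -1/\zeta$ preserves $\bH_0$ and $\deg_{z_1}(f)\le d$, this defines a $\bH_0$-stability-preserving involution on $\CC_\kappa[z_1,\ldots,z_n]$. Writing $f=\sum_k z_1^k Q_k$ one finds $\tilde f=\sum_k (-1)^{d-k}z_1^k Q_{d-k}$, and reindexing $j=d-k$ in the definition of $S_d$ shows $S_d[\tilde f]=R_d[f]$. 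The stability of $R_d[f]$ then follows from what was already proved for $S_d$.

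The only non-trivial step is the identity $(\star)$; everything else reduces to routine bookkeeping and to the elementary facts $\bH_0+\bH_0=\bH_0$ and $-1/\bH_0=\bH_0$. I foresee no serious obstacle.
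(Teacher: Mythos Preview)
Your proof is correct and follows precisely the route the paper indicates (``Using Lemma~\ref{strong-LS} and Remark~\ref{rrr} we deduce\ldots''): you realize $S_d$ as the specialization $z_1=0$ of $T_d$, then invoke Remark~\ref{rrr} to transport this to the identity $(\star)$, and the reversal $z_1\mapsto -1/z_1$ you use to reduce $R_d$ to $S_d$ is a natural and valid way to handle the second operator. One cosmetic remark: your ``involution'' satisfies $\tilde{\tilde f}=(-1)^d f$ rather than $\tilde{\tilde f}=f$, but since you only use that $f\mapsto\tilde f$ preserves $\bH_0$-stability this is harmless.
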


 
The above ``hard'' results do indeed imply  the ``soft'' ones.
To see this fix 
$\beta\in\NN^n$ and set 
$(\beta)_{\alpha}=\alpha!\binom{\beta}{\beta-\alpha}$ for $\alpha\in\NN^n$. 
In \cite[Lemma 8.2]{BB-I} it was shown that the linear operator on 
$\CC[z_1,\ldots,z_n]$ defined by $z^\alpha\mapsto (\beta)_{\alpha}z^\alpha$, 
$\alpha\in\NN^n$, preserves stability. In particular, if
$
\sum_{k=0}^{d}z_1^kQ_k(z_2)$ is stable then 
$
\sum_{k=0}^{d}\frac{z_1^k}{(d-k)!}
Q_k(z_2)
$ is stable 
and this extends to $n$ variables. Therefore, the "soft" Lieb-Sokal result 
(respectively, Theorem~\ref{th-LS}) follows from Lemma \ref{strong-LS} 
(respectively, Corollary \ref{cor-strong}).

\section{Transcendental Symbols and the Weyl Algebra}\label{s-PDO}

Define the {\em complex Laguerre-P\'olya class} $\HHH_n(\CC)$ as 
the class of entire functions in $n$ variables that are limits, uniformly on 
compact sets, of polynomials in $\HH_n(\CC)$, see, e.g., \cite[Chap.~IX]{Le}. 
The usual (real) {\em Laguerre-P\'olya class} $\HHH_n(\RR)$ consists of all 
functions in $\HHH_n(\CC)$ with real coefficients.

If $T : \KK[z_1,\ldots, z_n] \rightarrow \KK[z_1,\ldots,z_n]$, where 
$\KK=\RR$ or 
$\CC$, is a linear operator 
we define its {\em transcendental symbol}, $\overline{G}_T(z,w)$, to be the 
formal power series in $w_1, \ldots, w_n$ with polynomial coefficients in 
$\KK[z_1,\ldots,z_n]$ given by 
$$
\overline{G}_T(z,w)
:=\sum_{\alpha \in \NN^n} (-1)^\alpha T(z^\alpha)
\frac {w^\alpha}{\alpha!}.
$$
By abuse of notation we write
$\overline{G}_T(z,w) =T[e^{-z\cdot w}]$, where 
$z\cdot w=z_1w_1+\ldots+z_nw_n$. Let us recall from \cite{BB-I} the 
transcendental characterizations of complex, respectively real 
stability preservers. 

\begin{theorem}\label{multi-infinite-stab}
Let $T : \CC[z_1,\ldots, z_n] \rightarrow \CC[z_1,\ldots, z_n]$ be a 
linear operator. Then $T$ preserves ($\bH_0$-)stability if and only if either
\begin{itemize}
\item[(a)] $T$ has range of dimension at most one and is given by 
$
T(f) = \alpha(f)P,
$
where $\alpha$ is a linear form on $\CC[z_1,\ldots, z_n]$ and 
$P\in\HH_n(\CC)$, or 
\item[(b)] $\overline{G}_T(z,w)\in\HHH_{2n}(\CC)$.
\end{itemize}
\end{theorem}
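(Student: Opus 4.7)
The plan is to transfer the ``hard'' Theorem \ref{open-halfplane} to the transcendental setting through an approximation argument: sufficiency will couple the soft Lieb--Sokal lemma with Hurwitz' theorem, while necessity will couple the hard symbol criterion with the M\"obius substitution $w\mapsto -1/w$ and the classical limit $\prod_i(1-z_iw_i/N)^N\to e^{-z\cdot w}$.

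For sufficiency, case (a) is immediate: $T(f)=\alpha(f)P$ is either $0$ or a scalar multiple of the stable polynomial $P$. For (b), the starting point is the identity
$$T(f)(z)=f(-\partial_w)\,\overline{G}_T(z,w)\big|_{w=0},$$
which follows directly from the definition of $\overline{G}_T$. I write $\overline{G}_T=\lim_{N\to\infty}Q_N$ with $Q_N\in\HH_{2n}(\CC)$, locally uniformly. Given $f\in\HH_n(\CC)$, I view it as the $2n$-variable polynomial $F(z,w):=f(w)$, which is $\bH_0$-stable in $(z,w)$ because it depends only on $w$. The $\bH_0$-version of the soft Lieb--Sokal lemma (Theorem \ref{th-LS}, equivalent to its right half-plane form via the rotation $z\mapsto iz$) then yields that $F(-\partial_z,-\partial_w)Q_N=f(-\partial_w)Q_N(z,w)$ is $\bH_0$-stable in $(z,w)$ or identically zero. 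Specializing $w=0$ by letting $w=\varepsilon i(1,\ldots,1)\to 0$ and invoking Hurwitz' Theorem \ref{mult-hur} gives $f(-\partial_w)Q_N(z,0)\in\HH_n(\CC)\cup\{0\}$. Local uniform convergence $Q_N\to\overline{G}_T$ is inherited by all derivatives and then by the $w=0$ specialization, so $f(-\partial_w)Q_N(z,0)\to T(f)(z)$ uniformly on compact subsets of $\CC^n$; a second appeal to Hurwitz yields $T(f)\in\HH_n(\CC)\cup\{0\}$.

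For necessity, if $T$ has at most one-dimensional range we are in case (a). Otherwise, for each sufficiently large $\kappa\in\NN^n$ the restriction $T|_{\CC_\kappa}$ has range of dimension at least $2$, and Theorem \ref{open-halfplane}(b) forces $T[(z+w)^\kappa]\in\HH_{2n}(\CC)$. The M\"obius substitution $w\mapsto -1/w$ is a self-map of $\bH_0^n$ (since $-1/w\in\bH_0$ iff $w\in\bH_0$), so $T[(z-1/w)^\kappa]$ does not vanish on $\bH_0^{2n}$; multiplying by the $\bH_0^n$-non-vanishing factor $w^\kappa$ and commuting this scalar through $T$ produces
$$T[(zw-1)^\kappa]\in\HH_{2n}(\CC).$$
Specializing $\kappa=\kappa_N:=(N,\ldots,N)$ and rescaling $w\mapsto w/N$ (which preserves $\bH_0$-stability) yields
$$P_N(z,w):=T\!\left[\prod_{i=1}^n\!\left(1-\frac{z_iw_i}{N}\right)^{\!N}\right]=\sum_{\alpha\le\kappa_N}\binom{\kappa_N}{\alpha}\frac{(-1)^{|\alpha|}}{N^{|\alpha|}}\,T(z^\alpha)\,w^\alpha\in\HH_{2n}(\CC).$$
Since $\binom{\kappa_N}{\alpha}/N^{|\alpha|}\to 1/\alpha!$ as $N\to\infty$ for each fixed $\alpha$, the coefficient of each monomial $z^\beta w^\alpha$ in $P_N$ converges to the corresponding coefficient of $\overline{G}_T(z,w)$. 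A normal-family argument for $\bH_0$-stable polynomials (locally uniform bounds after a non-degenerate normalization, leveraging structural properties of the Laguerre--P\'olya class) would then promote this coefficient-wise convergence to locally uniform convergence on $\CC^{2n}$, establishing $\overline{G}_T\in\HHH_{2n}(\CC)$.

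The main obstacle I expect is precisely this last compactness step: coefficient-wise convergence of stable polynomials does not automatically imply locally uniform convergence, and upgrading it demands a locally uniform bound on $(P_N)$ derived from structural facts about entire functions in the Laguerre--P\'olya class (controlled exponential order, factorization, and the fact that $\bH_0$-stable polynomials cannot oscillate wildly without acquiring zeros in $\bH_0^{2n}$). All other ingredients -- the Lieb--Sokal lemma, Hurwitz' theorem, the hard symbol criterion from \cite{BB-I}, and the M\"obius trick -- are in place and combine cleanly.
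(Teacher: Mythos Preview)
This theorem is not proved in the present paper; it is merely recalled from \cite{BB-I} and used as a black box. So there is no in-paper proof to compare against. The paper does, however, hint at the intended approach: in \S\ref{ss-91} it records that the soft Lieb--Sokal lemma ``was also an essential ingredient in proving the sufficiency part of the classification theorems of \cite{BB-I}''. With that caveat, here is an assessment of your argument on its own merits.

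Your sufficiency direction is correct and matches the hint above. The identity $T(f)(z)=f(-\partial_w)\overline{G}_T(z,w)\big|_{w=0}$ is the right organizing principle, and combining it with the soft Lieb--Sokal substitution ``$u\mapsto -\partial_w$'' applied to $f(u)\,Q_N(z,w)$, followed by two applications of Hurwitz' theorem, gives exactly what is needed. One point of logical hygiene: in \emph{this} paper Theorem~\ref{th-LS} is re-proved \emph{using} Theorem~\ref{multi-infinite-stab}, so citing it as you do would be circular within the text. You should instead invoke the original result of \cite{LS}, or---staying inside the paper---derive the soft statement from the hard Lieb--Sokal lemmas (Lemma~\ref{strong-LS}, Corollary~\ref{cor-strong}) via the limit argument at the end of \S\ref{ss-91}; those rest only on the algebraic Theorem~\ref{open-halfplane}.

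Your necessity direction has the gap you yourself flag, and it is a genuine one. You produce stable polynomials $P_N$ whose coefficients converge to those of the formal series $\overline{G}_T$, but coefficient-wise convergence gives neither locally uniform convergence nor even the entireness of $\overline{G}_T$. The elementary bound $\binom{\kappa_N}{\alpha}/N^{|\alpha|}\le 1/\alpha!$ reduces both questions to a single analytic input: that $\sum_\alpha \sup_K|T(z^\alpha)|\,|w|^\alpha/\alpha!<\infty$ on compacts $K$, i.e.\ that stability-preservation forces a growth bound on $\alpha\mapsto T(z^\alpha)$. Nothing you cite supplies this, and the heuristic you sketch (``exponential order, factorization'') is a univariate Laguerre--P\'olya intuition that does not transfer automatically to $\bH_0$-stable polynomials in $2n$ variables---note for instance that $P_N(0,0)=T(1)$ is constant in $N$, yet even in one variable real-rooted polynomials with $p(0)=1$ need not form a normal family. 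This compactness step is precisely the substantive technical content of the necessity direction in \cite{BB-I}; without it your argument is incomplete.
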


\begin{remark}\label{lhp}
From Theorem~\ref{multi-infinite-stab} one can easily deduce a 
characterization of linear operators preserving $\Omega$-stability for any 
open half-plane $\Omega$. For instance, the analog of 
Theorem~\ref{multi-infinite-stab} (b) for the open right half-plane 
$\bH_{\frac{\pi}{2}}$ is that the {\em transcendental symbol of} $T$ {\em 
with respect to} $\bH_{\frac{\pi}{2}}$, i.e., the formal power series 
$$
T[e^{z\cdot w}]:=\sum_{\alpha \in \NN^n}T(z^\alpha)\frac {w^\alpha}{\alpha!}, 
$$
defines an entire function which is the limit, uniformly on compact sets, of 
weakly Hurwitz stable polynomials. 
\end{remark}

\begin{theorem}\label{multi-infinite-hyp}
Let $T : \RR[z_1,\ldots, z_n] \rightarrow \RR[z_1,\ldots, z_n]$ be a linear 
operator. Then $T$ preserves real stability if and only if either
\begin{itemize}
\item[(a)] $T$ has at most $2$-dimensional range and is given by 
$
T(f) = \alpha(f)P + \beta(f)Q,
$
where $\alpha, \beta$ are real linear forms on $\RR[z_1,\ldots, z_n]$ and 
$P,Q\in\HH_n(\RR)$ are such that $P + iQ\in\HH_n(\CC)$, or 
\item[(b)] Either $\overline{G}_T(z,w)\in\HHH_{2n}(\RR)$ or 
$\overline{G}_T(z,-w)\in\HHH_{2n}(\RR)$.
\end{itemize}
\end{theorem}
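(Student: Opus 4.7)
The plan is to derive this transcendental classification from the finite-degree classification of Theorem~\ref{multi-finite-hyp} by a limiting argument, with a separate analysis of the small-range alternative.

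\emph{Sufficiency.} In case (a), the hypothesis $P+iQ \in \HH_n(\CC)$ combined with Lemma~\ref{krein} forces every real linear combination $aP+bQ$ to be real stable or zero, so $T(f) = \alpha(f)P+\beta(f)Q$ preserves real stability. In case (b), WLOG $\overline{G}_T \in \HHH_{2n}(\RR)$: approximate uniformly on compact sets by polynomials $F_N \in \HH_{2n}(\RR)$. Given real stable $f \in \RR_\kappa[z]$, I first apply the diagonal multiplier $z^\alpha \mapsto z^\alpha/(\kappa-\alpha)!$, which is a $\kappa$-multiplier sequence by Theorem~\ref{kappa-mult}---its one-variable algebraic symbol reduces to a Laguerre polynomial form of the type $z^\kappa L_\kappa(-w/z)$, lying in $\HH_2(\RR)$---and then invoke Theorem~\ref{uvzw}(b) applied to this modified $f$ and the variable-swapped $F_N(w,z)$ to conclude that $f(-\partial_w)F_N(z,w)|_{w=0}$ is real stable in $z$. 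Since $F_N \to \overline{G}_T$ uniformly on compact sets forces the same convergence for derivatives, $f(-\partial_w)F_N|_{w=0} \to f(-\partial_w)\overline{G}_T|_{w=0} = T(f)$ uniformly on compacts, and Hurwitz's theorem (Theorem~\ref{mult-hur}) yields real stability of $T(f)$.

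\emph{Necessity.} Consider the restrictions $T_\kappa := T|_{\RR_\kappa[z_1,\ldots,z_n]}$; each preserves real stability, so by Theorem~\ref{multi-finite-hyp} either has range of dimension at most two or satisfies $T_\kappa[(z+\epsilon_\kappa w)^\kappa] \in \HH_{2n}(\RR)$ for some sign $\epsilon_\kappa \in \{+,-\}$. If the range of $T$ itself has dimension at most two, apply $T$ to real stable test polynomials producing linearly independent linear functionals $\alpha,\beta$; Lemma~\ref{krein} then forces $P+iQ \in \HH_n(\CC)$, giving case (a) of the present theorem. Otherwise, for all component-wise sufficiently large $\kappa$ the range of $T_\kappa$ exceeds dimension two, and pigeonholing on the cofinal sequence $\kappa=(N,\ldots,N)$ extracts a single sign $\epsilon \in \{+,-\}$ valid along a subsequence; fix this $\epsilon$, WLOG $\epsilon=+$.

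The core analytic step is to verify joint real stability of the polynomials
$T_\kappa[(1-zw/\kappa)^\kappa] = \sum_{\alpha \leq \kappa} \binom{\kappa}{\alpha}(-1/\kappa)^{|\alpha|} T(z^\alpha)w^\alpha$.
Since $(1-zw/\kappa)^\kappa$ is itself jointly real stable (the equation $z_iw_i = \kappa_i > 0$ has no solution with $z_i,w_i \in \bH_0$), joint stability of its image follows from Theorem~\ref{multi-finite-hyp} applied in $2n$ variables to the tensor operator $T_\kappa \otimes \mathrm{id}_w$: its $4n$-variable algebraic symbol factors as $T_\kappa[(z+z')^\kappa] \cdot (w+w')^\kappa$, a product of two real stable polynomials in disjoint variable groups, hence itself real stable. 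The identity $\binom{\kappa}{\alpha}/\kappa^{|\alpha|} \to 1/\alpha!$ as $\kappa \to \infty$ gives term-by-term convergence $T_\kappa[(1-zw/\kappa)^\kappa] \to \overline{G}_T(z,w)$, and combining standard growth control on members of $\HH_{2n}(\RR)$ with the entire character of $\overline{G}_T$ upgrades this to locally uniform convergence. Hurwitz's theorem then places $\overline{G}_T \in \HHH_{2n}(\RR)$, giving case (b).

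The chief obstacle is marrying the sign-consistency step (handled by pigeonholing, underpinned by the rigidity of sign patterns in Lemma~\ref{phase}) with the promotion from coefficient-wise to locally uniform convergence of the polynomials $T_\kappa[(1-zw/\kappa)^\kappa]$ needed to invoke Hurwitz's theorem.
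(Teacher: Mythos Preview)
This theorem is not proved in the present paper; it is merely quoted from \cite{BB-I} (see the sentence immediately preceding Theorem~\ref{multi-infinite-stab}), so there is no in-paper proof to compare against. Your overall strategy---deduce the transcendental statement from the finite-degree classification (Theorem~\ref{multi-finite-hyp}) by a limiting argument, handling the small-range case separately via Lemma~\ref{krein}---is the natural one and is essentially how the result is established in \cite{BB-I}.

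A few points in your sketch need attention, however. In the sufficiency direction for (b), your appeal to Theorem~\ref{uvzw}(b) requires $\deg_{z_i}g\le\kappa_i$, but after the variable swap your $g$ is $F_N$ with its $w$-degree playing the role of $z$-degree, and you have no control over that; the $\kappa$-dependent multiplier does not obviously repair this. A cleaner route is to use the Lieb--Sokal substitution $u_i\mapsto -\partial/\partial w_i$ (available in hard form from Corollary~\ref{cor-strong}, hence not circular) on the stable polynomial $f(u)F_N(z,w)$ and then specialize $w=0$. In the necessity direction, your ``WLOG $\epsilon=+$'' hides a genuine asymmetry: when $\epsilon=-$, the $4n$-variable symbol of $T_\kappa\otimes\mathrm{id}_w$ you wrote down is \emph{not} real stable (the factor $(w+w')^\kappa$ would have to become $(w-w')^\kappa$, which vanishes on the diagonal of $\bH_0^{2n}$), so $T_\kappa\otimes\mathrm{id}_w$ need not preserve real stability. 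One must instead tensor with the sign flip $w\mapsto -w$ on the $w$-block; this operator \emph{does} preserve real stability and sends $(1-zw/\kappa)^\kappa$ to a polynomial converging to $\overline{G}_T(z,-w)$, which is exactly what case~(b) demands. Finally, the coefficient-wise to locally-uniform convergence you flag is handled by a normal-families argument (local boundedness from coefficient control plus Montel), as you suspect.
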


To illustrate the power of Theorems \ref{multi-infinite-stab} and  
\ref{multi-infinite-hyp} we show that the main results of \cite{BBS3} 
for partial differential operators immediately follow from these two theorems. 
Recall that a (Weyl algebra) finite order linear partial differential 
operator with
polynomial coefficients is an operator 
$T: \KK[z_1,\ldots, z_n] \rightarrow \KK[z_1,\ldots, z_n]$, where $\KK=\CC$ 
or $\RR$, of the form 
\begin{equation}\label{difop}
T= \sum_{\alpha \leq \beta} Q_\alpha(z)
\frac{\partial^\alpha}{\partial z^\alpha},
\end{equation}
where $\beta \in\NN^n$ and $Q_\alpha\in \KK[z_1,\ldots, z_n]$, 
$\alpha\le \beta$.

\begin{theorem}\label{krull}
Let $T: \KK[z_1,\ldots, z_n] \rightarrow \KK[z_1,\ldots, z_n]$, where 
$\KK=\CC$ or $\RR$, be defined by \eqref{difop} and set 
$$
F(z,w) = \sum_{\alpha \leq \beta} Q_\alpha(z)w^\alpha \in 
\KK[z_1,\ldots z_n, w_1,\ldots, w_n]. 
$$
Then 
\begin{itemize}
\item[(a)] $T$ preserves stability if and only if $F(z,-w)$ is stable; 
\item[(b)] $T$ preserves real stability if and only if $F(z,-w)$ is real 
stable. 
\end{itemize}
\end{theorem}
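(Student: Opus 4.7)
My plan is to derive Theorem~\ref{krull} as a direct consequence of the transcendental characterizations, Theorems~\ref{multi-infinite-stab} and~\ref{multi-infinite-hyp}, by computing the transcendental symbol $\overline{G}_T$ of the Weyl-algebra operator $T$. Since $\partial_z^\alpha e^{-z\cdot w} = (-w)^\alpha e^{-z\cdot w}$, a direct calculation gives
\begin{equation*}
\overline{G}_T(z,w) \;=\; T[e^{-z\cdot w}] \;=\; \sum_{\alpha \leq \beta} Q_\alpha(z)(-w)^\alpha e^{-z\cdot w} \;=\; F(z,-w)\, e^{-z\cdot w}.
\end{equation*}

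The key intermediate lemma I would prove is that for any polynomial $G \in \KK[z_1,\ldots,z_n,w_1,\ldots,w_n]$, the entire function $G(z,w)\, e^{-z\cdot w}$ lies in $\HHH_{2n}(\KK)$ if and only if $G$ is $\KK$-stable or identically zero. The ``only if'' direction is immediate from Hurwitz' theorem (Theorem~\ref{mult-hur}) together with the fact that $e^{-z\cdot w}$ vanishes nowhere. For the ``if'' direction, approximate $e^{-z\cdot w} = \lim_{N\to\infty} \prod_{j=1}^n (1 - z_j w_j/N)^N$; each factor $1 - z_j w_j/N$ is stable because $\{z_j w_j : z_j,w_j \in \bH_0\} = \CC \setminus [0,\infty)$ (since $\arg(z_j)+\arg(w_j) \in (0,2\pi)$), so the positive value $N$ is missed. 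Multiplication by a stable $G$ preserves stability, and passing to the limit places $G\, e^{-z\cdot w}$ in $\HHH_{2n}(\KK)$.

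Combining this lemma with Theorem~\ref{multi-infinite-stab} immediately yields part~(a); the $1$-dimensional range alternative in that theorem forces $T\equiv 0$ for a Weyl-algebra operator (because otherwise $\overline{G}_T = F(z,-w) e^{-z\cdot w}$ would have to factor as $\phi(w)P(z)$, which is incompatible with the non-polynomial exponential unless $F\equiv 0$), and then $F\equiv 0$ is vacuously stable. For part~(b) the ``if'' direction and the first disjunct of Theorem~\ref{multi-infinite-hyp}(b) go through identically. The main obstacle is ruling out the second disjunct, $\overline{G}_T(z,-w) = F(z,w) e^{z\cdot w} \in \HHH_{2n}(\RR)$, which a priori could hold without $F(z,-w)$ being stable. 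My plan is to show this forces $F\equiv 0$: the specialization $w = z$ is stability-preserving (as $z\in\bH_0^n$ implies $(z,z)\in\bH_0^{2n}$), so it sends $\HHH_{2n}(\RR)$ into $\HHH_n(\RR)$, giving $F(z,z) e^{\sum z_j^2} \in \HHH_n(\RR)$. Further restricting to $z = \lambda t$ with $\lambda \in \RR^n_+$ lands in the univariate Laguerre--P\'olya class $\HHH_1(\RR)$, whose allowed Gaussian factors are $e^{-\alpha t^2}$ with $\alpha\geq 0$; hence $F(\lambda t, \lambda t) e^{|\lambda|^2 t^2}$ belongs to $\HHH_1(\RR)$ only when $F(\lambda t, \lambda t) \equiv 0$ as a polynomial in $t$. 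Iterating with the real translations $w = z + a$, $a\in \RR^n$, which also preserve real stability, forces $F$ to vanish on the open set $\RR^n_+ \times \RR^n \subset \RR^{2n}$, and therefore $F\equiv 0$, completing part~(b).
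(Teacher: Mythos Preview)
Your approach coincides with the paper's: compute $\overline{G}_T(z,w)=F(z,-w)\,e^{-z\cdot w}$ and invoke Theorems~\ref{multi-infinite-stab} and~\ref{multi-infinite-hyp}. The paper's proof is a single sentence doing exactly this; you have supplied the supporting details it omits, namely the lemma that $G\,e^{-z\cdot w}\in\HHH_{2n}(\KK)$ iff $G$ is stable (or zero), and for part~(b) the exclusion of the second disjunct $\overline{G}_T(z,-w)=F(z,w)\,e^{z\cdot w}\in\HHH_{2n}(\RR)$. Your Laguerre--P\'olya argument for that exclusion (diagonalising to $F(\lambda t,\lambda t+a)e^{|\lambda|^2 t^2+(\lambda\cdot a)t}$ and using that the univariate class forbids $e^{ct^2}$ with $c>0$) is correct.

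One small gap remains: in part~(b) you must also dispose of the degenerate alternative~(a) of Theorem~\ref{multi-infinite-hyp}, which is \emph{two}-dimensional range $T(f)=\alpha(f)P+\beta(f)Q$, not one-dimensional. Your factorisation argument from part~(a) (that $F(z,-w)e^{-z\cdot w}=\phi(w)P(z)$ forces $F\equiv 0$) does not directly cover $P(z)\phi_1(w)+Q(z)\phi_2(w)$. The fix is in the same spirit: show that a nonzero Weyl-algebra operator has infinite-dimensional range. For instance, from $\sum_\gamma T(z^\gamma)c^\gamma/\gamma!=F(z,c)e^{c\cdot z}$ one sees that if all $T(z^\gamma)$ lay in a fixed $m$-dimensional span then evaluating at $m{+}1$ generic points $z_0,\ldots,z_m$ (chosen off the zero-locus of $F(\,\cdot\,,c)$) would yield a nontrivial relation $\sum_j\mu_jF(z_j,c)e^{c\cdot z_j}\equiv 0$, contradicting the linear independence of exponentials $e^{c\cdot z_j}$ over polynomials in $c$. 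With this, the $\leq 2$-dimensional alternative again forces $T\equiv 0$ and hence $F\equiv 0$, completing part~(b).
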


\begin{proof}
The 
(transcendental) symbol of $T$ is given by 
$$
T[e^{-z\cdot w}]= e^{-z\cdot w} F(z,-w),
$$
so (a) and (b) follow immediately from 
Theorems \ref{multi-infinite-stab} and  \ref{multi-infinite-hyp}, 
respectively. 
\end{proof}

\begin{remark}
Theorem \ref{krull} was first established in 
\cite[Theorems 1.2--1.3]{BBS3} by different methods. An interesting
consequence noted in \cite[Theorem 1.11]{BBS3} is 
that if a Weyl
algebra operator $T$ preserves (real) stability then so does its Fischer-Fock
dual $T^*$. As shown in \cite{BBS3}, this duality result is a 
powerful multivariate generalization of the classical 
Hermite-Poulain-Jensen theorem and P\'olya's curve theorem \cite{CC1,RS}.
\end{remark}

\section*{{\bf B. Applications}}

We will now apply the theory developed in Part A to show that 
(the key steps in) existing proofs
and generalizations of the Lee-Yang and Heilmann-Lieb theorems follow in a 
simple and unified way from the characterizations of $\Omega$-stability 
preservers in terms of operator symbols obtained in \cite{BB-I}. 
These results are due to 
Asano \cite{As}, Ruelle \cite{ruelle-g1,ruelle-g2,ruelle5}, 
Newman \cite{new1,new2}, Lieb-Sokal \cite{LS}, Hinkkanen \cite{hink},
Choe et al \cite{COSW}, Wagner \cite{W2}. For brevity's sake, we will
only focus on the main arguments used in deriving them and in some cases we 
point out possible extensions.

\section{Recovering Lee-Yang and Heilmann-Lieb Type Theorems}\label{ss-94}

Let us first recall the original version of the Lee-Yang theorem for  
the {\em partition function} of the 
{\em ferromagnetic Ising model} (at inverse temperature $1$). This function
may be written as 
$$
Z(h_1,\ldots, h_n)=\sum_{\sigma \in \{-1,1\}^n}\mu(\sigma)e^{\sigma\cdot h}, 
$$
where $\sigma \cdot h = \sum_{i=1}^n \sigma_i h_i$ and 
$\mu(\sigma)= e^{\sum_{i,j =1}^n J_{ij}\sigma_i\sigma_j}$. 
 
\begin{theorem}[Lee-Yang \cite{LY}]\label{ly-orig}
If $J_{ij} \geq 0$ for all $i,j \in [n]$ then 
\begin{itemize}
\item[(a)] $Z(h_1,\ldots, h_n)\neq 0$ whenever $\Re(h_i)>0$, 
$1 \leq i \leq n$; 
\item[(b)] All zeros of $Z(h,\ldots, h)$ lie on the imaginary axis. 
\end{itemize}
\end{theorem}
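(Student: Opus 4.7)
I would begin with the standard Lee--Yang change of variables $z_i = e^{-2h_i}$, under which $\Re(h_i)>0$ is equivalent to $z_i\in\DD$ and $e^{\sigma_i h_i}=e^{h_i}z_i^{(1-\sigma_i)/2}$. The partition function factors as
\[
Z(h_1,\ldots,h_n)=e^{h_1+\cdots+h_n}\,P(z_1,\ldots,z_n),
\]
where $P\in\CC_{(1^n)}[z_1,\ldots,z_n]$ is a multi-affine polynomial encoding the ferromagnetic weights. Part (a) then amounts to: $P$ is $\DD$-stable.

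Given (a), part (b) follows from the spin-flip symmetry $\sigma\mapsto-\sigma$, which preserves $\mu(\sigma)$ (depending only on $\sigma_i\sigma_j$) and hence yields $Z(h)=Z(-h)$. In $z$-coordinates this is the functional equation $P(z_1,\ldots,z_n)=z_1\cdots z_n\,P(z_1^{-1},\ldots,z_n^{-1})$. Specializing $z_i=z$ produces a palindromic polynomial $p(z):=P(z,\ldots,z)$ with $p(z)=z^n p(1/z)$, whose zeros come in reciprocal pairs $(\zeta,1/\zeta)$. Part (a) forbids zeros with $|\zeta|<1$; the functional equation then forbids $|\zeta|>1$, so all zeros must satisfy $|\zeta|=1$, corresponding to $\Re(h)=0$.

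The main step is the $\DD$-stability of $P$, which I would prove by building $P$ from stable ``two-spin'' building blocks. Setting $K_{ij}:=J_{ij}+J_{ji}\ge 0$ for $i<j$ and introducing edge variables $u_{ij},v_{ij}$ (with $u_{ij}$ attached to spin $i$ and $v_{ij}$ to spin $j$), a short M\"obius computation (transporting $\DD$ to $\bH_0$ via Lemma~\ref{translate} and invoking Lemma~\ref{maximum}) shows that each pair polynomial
\[
Q_{ij}(u,v):=e^{K_{ij}}(1+uv)+e^{-K_{ij}}(u+v)
\]
is $\DD$-stable. Hence the edge-duplicated product $\mathcal{Q}:=\prod_{i<j}Q_{ij}(u_{ij},v_{ij})$ is $\DD$-stable in its $2\binom{n}{2}$ variables, as a product of stable polynomials in disjoint variable sets. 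For each spin $i$, the $n-1$ edge-variables attached to $i$ are merged into a single variable $z_i$ by iterated Asano contractions $F=A+Bu+Cv+Duv\mapsto A+Duv$. Each such contraction is a linear operator on $\CC_{(1,1)}[u,v]$ whose algebraic $\DD$-symbol, in the sense of \eqref{stablesymb-d}, is $1+uv\tilde u\tilde v$, which is $\DD$-stable since $|uv\tilde u\tilde v|<1$ on $\DD^4$. By Theorem~\ref{open-disk} this operator preserves $\DD$-stability; relabelling the product $uv\leadsto z$ then trivially preserves it too. A direct expansion shows that, after all contractions are performed (tree-wise, one spin at a time), $\mathcal{Q}$ becomes $P$ up to a nonzero scalar.

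The main obstacle is the combinatorial verification that the iterated Asano contractions reassemble $P$ correctly: one must track the order in which pairs of variables are contracted and check that the ``cross'' monomials $Bu+Cv$ dropped at each Asano step correspond precisely to inconsistent spin assignments (some copies of spin $i$ taking value $+1$ while others take $-1$) that do not arise in the true partition function. All purely analytic input, namely the stability of $Q_{ij}$ and of each Asano symbol, is subsumed by Theorem~\ref{open-disk} and Lemma~\ref{maximum}.
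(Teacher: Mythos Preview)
Your proof is correct and takes a genuinely different route from the paper's own argument. The paper works directly in the $h$-variables (right half-plane) and introduces the pair interactions one at a time via the second-order differential operator
\[
T=\cosh(J_{ij})+\sinh(J_{ij})\,\frac{\partial^2}{\partial h_i\,\partial h_j}
\]
acting on the free Laplace transform $\prod_i(e^{h_i}+e^{-h_i})$; that $T$ preserves weak Hurwitz stability is read off from its transcendental symbol via Theorem~\ref{multi-infinite-stab} and Remark~\ref{lhp} (this is the Lieb--Sokal mechanism of \S\ref{ss-newman}). You instead pass to the disk picture $z_i=e^{-2h_i}$ and assemble the multi-affine polynomial $P$ from the two-spin blocks $Q_{ij}$ by iterated Asano contractions, invoking Theorem~\ref{open-disk} at each step---precisely the route the paper records separately in \S\ref{ss-asano} (Lemma~\ref{l-asano}) and in the circle-theorem formulation (Theorem~\ref{lee-yang}). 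Both proofs are short once the relevant symbol criterion is available; the differential-operator approach avoids the combinatorial bookkeeping of iterated contractions that you rightly flag as the main nuisance, and it extends more directly to measures and entire functions, whereas your Asano route stays entirely within finite-degree polynomials, makes the multi-affine and palindromic structure of $P$ explicit, and delivers part~(b) via a clean functional-equation argument rather than a one-line appeal to the $\sigma\mapsto-\sigma$ symmetry.
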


\begin{remark}\label{r-phys}
In physical terms \cite{BBCK,BBCKK,LY,LS,sok}, the $J_{ij}$ are 
ferromagnetic ($\ge 0$) coupling 
constants while the $h_i$ are external (magnetic) fields sometimes also 
called fugacities. 
Theorem \ref{ly-orig} (b) asserts that the zeros of the 
partition function of the ferromagnetic Ising model accumulate on the 
imaginary axis in the complex fugacity plane and a (first-order) phase 
transition occurs only at zero magnetic field.
\end{remark}

Before we give a proof of the Lee-Yang theorem let us make a historical 
digression. In his work on the zeros of the Riemann zeta 
function P\'olya was led to a simple yet useful result: 

\begin{lemma}[P\'olya \cite{pol-riem}, Hilfssatz II]\label{pol-h2}
Let $a>0$, $b\in\RR$, and $G(z)$ be a real entire function of genus
$0$ or $1$ with at least one real zero and only real zeros. Then the function
\begin{equation}\label{polypoly}
G(z+ia)e^{ib}+G(z-ia)e^{-ib}
\end{equation}
has only real zeros.
\end{lemma}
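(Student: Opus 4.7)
The plan is to first prove the lemma for $G$ a real-rooted polynomial via a direct modulus estimate, and then extend to the full class by approximation and Hurwitz' Theorem \ref{mult-hur}.

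For the polynomial case, write $G(z) = C\prod_j(z - r_j)$ with $C \in \RR\setminus\{0\}$ and $r_j \in \RR$. For any $z = x + iy$ with $y > 0$ the elementary inequality $(y+a)^2 > (y-a)^2$ gives, factor by factor,
$$
|G(z+ia)|^2 = |C|^2 \prod_j \bigl((x-r_j)^2 + (y+a)^2\bigr) > |C|^2 \prod_j \bigl((x-r_j)^2 + (y-a)^2\bigr) = |G(z-ia)|^2.
$$
Since $|e^{\pm ib}|=1$ this yields $|H(z)| \geq |G(z+ia)| - |G(z-ia)| > 0$ on the open upper half-plane $\bH_0$, so $H$ is $\bH_0$-stable. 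Because $G$ has real coefficients and $b\in\RR$, $H$ further satisfies $\overline{H(\bar z)} = H(z)$, hence its zero set is symmetric under complex conjugation; combined with $\bH_0$-stability this forces every zero of $H$ onto the real axis.

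For the general case I would invoke the Laguerre-Pólya characterization: a real entire function of genus at most $1$ with only real zeros belongs to $\HHH_1(\RR)$ and is therefore the limit, uniformly on compact subsets of $\CC$, of a sequence of real-rooted polynomials $G_k$. The corresponding entire functions
$$
H_k(z) := G_k(z+ia)e^{ib} + G_k(z-ia)e^{-ib}
$$
converge to $H$ uniformly on compacts and are $\bH_0$-stable by the polynomial case. Hurwitz' Theorem \ref{mult-hur} then implies that $H$ is either $\bH_0$-stable or identically vanishing on $\bH_0$, and by analytic continuation the latter alternative means $H \equiv 0$ on all of $\CC$.

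The only real obstacle is ruling out this degenerate limit, and this is precisely where the hypothesis that $G$ has at least one real zero enters. Indeed, if $r\in\RR$ with $G(r)=0$, then direct substitution gives
$$
H(r-ia) = G(r)e^{ib} + G(r-2ia)e^{-ib} = G(r-2ia)e^{-ib},
$$
and since $r-2ia \notin \RR$ while $G$ has only real zeros, we have $G(r-2ia)\neq 0$, so $H(r-ia)\neq 0$. Hence $H\not\equiv 0$ and the conjugate-symmetry argument from the polynomial case completes the proof.
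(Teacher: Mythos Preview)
Your proof is correct. The approach is closely related to the paper's but packaged more elementarily: the paper observes that $G(z+ia)e^{ib}\in\HHH_1(\CC)$ (since $G\in\HHH_1(\RR)$ by Hadamard factorization) and then applies the ``real part'' operator $R$, which by Lemma~\ref{krein} sends $\HHH_1(\CC)$ into $\HHH_1(\RR)$, yielding $H=2R(G(z+ia)e^{ib})\in\HHH_1(\RR)$. Your factor-by-factor modulus estimate $|G(z+ia)|>|G(z-ia)|$ for $\Im(z)>0$ is precisely the content of Lemma~\ref{krein}(2)$\Rightarrow$(1) specialized to $h(z)=G(z+ia)e^{ib}$ (note $|h(\bar z)|=|G(z-ia)|$ since $G$ is real), so you are in effect reproving that implication directly for this particular $h$ rather than invoking it as a black box. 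The gain is a fully self-contained argument; the paper's formulation, on the other hand, immediately yields the multivariate extension mentioned there. A further merit of your write-up is that you make explicit where the hypothesis ``at least one real zero'' is used---to exclude $H\equiv 0$ in the Hurwitz limit---which the paper's sketch leaves implicit. One small point you might tighten: the strict inequality in the polynomial step needs at least one linear factor, so when you approximate $G$ by real-rooted polynomials $G_k$ you should note that the $G_k$ can be chosen non-constant (e.g.\ as partial Hadamard products retaining a zero of $G$); alternatively, observe that for constant $G_k$ the function $H_k$ is itself constant, hence trivially $\bH_0$-stable or zero.
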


Hilfssatz II was subsequently employed 
by Kac \cite[pp.~424--426]{pol-coll} to settle a special case
of Theorem \ref{ly-orig} that proved to be inspirational for Lee and
Yang's final proof \cite{LY} (cf.~\cite[Remark 4.2]{BB-I}). Recently, 
Lee-Yang type results and applications to Fourier transforms with all
real zeros were obtained in \cite{AC,car-F,car} by 
iterating the process of Hilfssatz II. A simple proof of this result 
and multivariate extensions is as follows. Let $R$ be the linear 
operator on formal power series in $n$ variables with complex coefficients 
$f(z)=\sum_{\alpha\in \NN^n}a(\alpha)z^\alpha$ defined by 
$$
R\!\left(\sum_{\alpha \in \NN^n} a(\alpha)z^\alpha\right)
= \sum_{\alpha \in \NN^n} \Re(a(\alpha))z^\alpha=\frac{1}{2}\!\left(f(z)
+\overline{f(\bar{z})}\right).
$$
By Lemma~\ref{krein} $R$ maps the set of stable polynomials into the set of 
real stable polynomials and 
consequently also the complex Laguerre-P\'olya class $\HHH_n(\CC)$ 
(cf.~\S \ref{s-PDO}) into the Laguerre-P\'olya class $\HHH_n(\RR)$. 
In the special case when $n=1$ and $G(z)$ is as in Lemma \ref{pol-h2} it 
follows from Hadamard's factorization theorem that $G(z)\in \HHH_1(\RR)$ 
hence $G(z+ia)e^{ib}\in \HHH_1(\CC)$ and by the above
$$
2R\!\left(G(z+ia)e^{ib}\right)= G(z+ia)e^{ib}+G(z-ia)e^{-ib}\in \HHH_1(\RR), 
$$
so the function in \eqref{polypoly} has only real zeros. Note also that 
$e^{-iz}\in \HHH_n(\CC)$ and thus $2R(e^{-iz})=\cos(z)\in \HHH_n(\RR)$. 

More general versions of Theorem \ref{ly-orig} were obtained in e.g.~\cite{LS}
and \cite{new2}, see \S \ref{ss-newman}. For simplicity of argument and 
exposition
we will concentrate for the moment just on the original Lee-Yang theorem 
and give a short proof based on the ideas in \cite{LS} combined with
Theorem \ref{multi-infinite-stab}.

\begin{proof}[Proof of Theorem \ref{ly-orig}]
Note that (b) follows from (a) by symmetry in $\sigma \mapsto -\sigma$. To
prove (a) define $\mathcal{M}$ to be the set of functions 
$\mu :  \{-1,1\}^n \rightarrow \CC$ whose Laplace transform 
$$
Z_\mu= \sum_{\sigma \in \{-1,1\}^n}\mu(\sigma)e^{\sigma\cdot h} 
$$
is the limit, uniformly on compact sets, of weakly Hurwitz stable polynomials
(i.e., non-vanishing whenever all variables are in the open right 
half-plane $\bH_{\frac{\pi}{2}}$).

\medskip

{\em Claim}:  Let $i,j\in [n]$ and $J_{ij}\geq 0$. If 
$\mu \in \mathcal{M}$ then $\tilde{\mu}_{ij} \in \mathcal{M}$, where 
$$
\tilde{\mu}_{ij}(\sigma)= 
\begin{cases}e^{J_{ij}}\mu(\sigma) \mbox{ if } \sigma_i=\sigma_j,  \\
  e^{-J_{ij}}\mu(\sigma) \mbox{ if } \sigma_i\neq\sigma_j.
\end{cases}
$$

Let us show that the claim implies the theorem. Indeed, 
if $\mu_0:\{-1,1\}^n \rightarrow \CC$  
is such that $\mu(\sigma)=1$ for all $\sigma \in \{-1,1\}^n$ then its
Laplace transform $Z_{\mu_0}$ equals 
$(e^{h_1}+e^{-h_1})\cdots (e^{h_n}+e^{-h_n})$. As noted above one has   
$\cos(z)\in \HHH_n(\RR)$, which implies that $\mu_0\in\mathcal{M}$ by a 
rotation of the variables. Then by successively applying to $\mu_0$ the 
transformations defined above for all pairs $(i,j)\in [n]\times [n]$ 
one gets (a).

To prove the claim note that $Z_{\tilde{\mu}_{ij}}=T(Z_{\mu_0})$, 
where 
$$
T= \cosh(J_{ij})
+ \sinh(J_{ij})\frac{\partial^2}{\partial z_i \partial z_j}. 
$$
By Theorem \ref{multi-infinite-stab} and Remark \ref{lhp} the  
operator $T$ preserves weak Hurwitz 
stability. Since $T$ is a second order (linear) differential operator, 
by standard results in complex analysis we have that if $f_k \rightarrow f$ 
uniformly on compacts then 
 $T(f_k) \rightarrow T(f)$ uniformly on compacts. This proves the claim.
\end{proof}

\subsection{Newman's Theorem and the Lieb-Sokal Approach}\label{ss-newman}

In \cite{new2} Newman proved a strong Lee-Yang theorem stating 
that the Lee-Yang property holds for one-component ferromagnetic pair 
interactions if and 
only if it holds for zero pair interactions. This theorem was subsequently
generalized in \cite{LS} by Lieb and Sokal who showed that one-component 
ferromagnetic pair interactions are ``universal multipliers for Lee-Yang
measures'' and established a similar result for two-component 
ferromagnets. 
Lieb-Sokal's key observation was that it would suffice to show that
a certain linear differential
operator preserves the Lee-Yang property, which they proved by reducing the
problem to the following statement about polynomials. 
 
\begin{theorem}[Lieb-Sokal]\label{th-LS}
Let $\{P_i(u)\}_{i=1}^m$ and $\{Q_i(v)\}_{i=1}^m$ be polynomials in $n$ 
complex variables $u=(u_1,\ldots,u_n)$, $v=(v_1,\ldots,v_n)$, and define 
\begin{equation*}
\begin{split}
R(u,v)&= \sum_{i=1}^m P_i(u)Q_i(v), \\
S(z)&= \sum_{i=1}^m P_i(\partial/\partial z)Q_i(z),
\end{split}
\end{equation*}
where $z=(z_1,\ldots,z_n)$, 
$\partial/\partial z=(\partial/\partial z_1,\ldots,\partial/\partial z_n)$. 
If $R$ is weakly Hurwitz stable (in $2n$ variables) then $S$ is either weakly 
Hurwitz stable or identically zero. 
\end{theorem}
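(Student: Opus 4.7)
The plan is to obtain $S(z)$ from $R(u,v)$ by iterating, for $i=1,\ldots,n$, the elementary substitution that replaces $u_i$ by $\partial/\partial v_i$. For each $i\in [n]$ let $L_i$ be the linear operator that acts on a polynomial $h$ in the variables $(u_i,u_{i+1},\ldots,u_n,v_1,\ldots,v_n)$ by
\[
L_i\!\left(\sum_{k\ge 0} u_i^k\, h_k(u_{i+1},\ldots,u_n,v)\right)=\sum_{k\ge 0}\frac{\partial^k}{\partial v_i^k}h_k(u_{i+1},\ldots,u_n,v).
\]
Expanding $P_i(u)=\sum_\alpha c_{i,\alpha}u^\alpha$ and computing the composition, one readily verifies that $(L_n\circ\cdots\circ L_1)(R)(v)=\sum_{i=1}^m P_i(\partial/\partial v)Q_i(v)=S(v)$. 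Hence, once we know that each $L_i$ carries weakly Hurwitz stable polynomials to weakly Hurwitz stable polynomials (or to the zero polynomial), an $n$-fold iteration starting from the weakly Hurwitz stable $R$ delivers $S$ (after renaming $v$ as $z$) as a weakly Hurwitz stable polynomial, or $S\equiv 0$, as the theorem asserts.

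The core task is thus to show that the elementary substitution $u_i\mapsto\partial/\partial v_i$ preserves weak Hurwitz stability. This is the ``soft'' Lieb--Sokal operation, and the paragraph immediately following Corollary~\ref{cor-strong} in \S\ref{ss-91} indicates the route from its ``hard'' counterpart: first restrict to a finite-degree subspace $\CC_\kappa$ in the relevant pair of variables $(u_i,v_i)$; next invoke Corollary~\ref{cor-strong} in its $\bH_{\pi/2}$-form (obtained from the rotation $z\mapsto iz$ that intertwines $\bH_0$- and $\bH_{\pi/2}$-stability) to get a stability-preserving finite-degree analogue of $L_i$; strip the ensuing binomial decorations by pre- and post-composing with the diagonal multiplier $z^\alpha\mapsto(\beta)_\alpha z^\alpha$, which preserves stability by \cite[Lemma~8.2]{BB-I} as recalled in \S\ref{ss-91}; and finally let the degree cap tend to infinity, applying Hurwitz's Theorem~\ref{mult-hur} to transfer the non-vanishing to the limit.

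The main technical obstacle is this passage from bounded to unbounded degree. Specifically, one has to verify that the rescaled, diagonally corrected operators provided by Corollary~\ref{cor-strong} converge, uniformly on compact subsets of $\bH_{\pi/2}^{2n-i+1}$, to $L_i$ itself, so that Hurwitz's theorem applies and delivers weak Hurwitz stability (or identical vanishing) of $L_i(h)$ from the corresponding property of each truncation. Granted this point, the $n$-fold iteration described above is a matter of straightforward bookkeeping and completes the proof of Theorem~\ref{th-LS}.
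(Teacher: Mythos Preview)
Your route is valid but differs from the paper's proof. The paper does not decompose into single-variable substitutions $L_i$ and does not pass through the hard Lieb--Sokal lemmas of \S\ref{ss-91}. Instead it defines the full operator $T:\CC[u_1,\ldots,u_n,v_1,\ldots,v_n]\to\CC[v_1,\ldots,v_n]$ by $T(u^\alpha v^\beta)=\partial^\alpha v^\beta/\partial v^\alpha$, observes that the theorem is equivalent to $T$ preserving weak Hurwitz stability, and then checks this in one stroke via Theorem~\ref{multi-infinite-stab}/Remark~\ref{lhp}: the transcendental symbol computes to $\prod_i e^{\eta_i v_i}e^{\eta_i\xi_i}$, which is manifestly a uniform-on-compacts limit of weakly Hurwitz stable polynomials. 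That is the entire argument.

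Your approach buys a more hands-on derivation tied to the hard operators $R_d$ of Corollary~\ref{cor-strong}, and indeed the paragraph closing \S\ref{ss-91} endorses exactly this passage (hard $\Rightarrow$ soft) as an alternative way to obtain Theorem~\ref{th-LS}. One simplification you should make: no genuine limiting step or Hurwitz argument is needed. For a \emph{fixed} polynomial $h$, choose $d$ at least the maximum of $\deg_{u_i}h$ and $\deg_{v_i}h$; then precomposing with the stability-preserving multiplier $u_i^k\mapsto \frac{d!}{(d-k)!}u_i^k$ and applying (the $\bH_{\pi/2}$-rotated) $R_d$ yields $L_i(h)$ exactly, up to the nonzero scalar $1/d!$. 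So the ``main technical obstacle'' you flag dissolves: the degree cap can simply be chosen large enough input-by-input, and the iteration $L_n\circ\cdots\circ L_1$ goes through without any convergence analysis. The paper's direct symbol computation is shorter still and avoids the variable-by-variable bookkeeping altogether.
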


\begin{proof}
Define a linear operator 
$$
T: \CC[u_1,\ldots, u_n, v_1,\ldots, v_n] 
\rightarrow \CC[u_1,\ldots, u_n, v_1,\ldots, v_n]
$$ 
by letting 
$$
T(u^\alpha v^\beta)= \frac{\partial^{\alpha}}
{\partial v_1^{\alpha_1}  \cdots \partial v_n^{\alpha_n}}\big(v^\beta\big), 
\quad \alpha, \beta \in \NN^n, 
$$
and extending linearly. Clearly, the theorem is equivalent to proving that 
$T$ preserves weak Hurwitz stability. By Theorem \ref{multi-infinite-stab} and 
Remark \ref{lhp} this amounts 
to showing that the formal power series
$$
\overline{G}_T(u,v,\xi,\eta)=\sum_{\alpha, \beta}T(u^\alpha v^\beta) 
\frac{\xi^\alpha \eta^\beta}{\alpha!\beta!}
$$
(i.e., the transcendental symbol for $\bH_{\frac{\pi}{2}}$) defines an entire 
function which is the limit, uniformly on compact sets, of weakly Hurwitz 
stable polynomials. An elementary computation then yields
$$
\overline{G}_T(u,v,\xi,\eta) =\prod_{i=1}^n
\big(e^{\eta_iv_i}e^{\eta_i\xi_i}\big),
$$
which satisfies the above requirement 
since $e^{zw}= \lim_{n \rightarrow \infty}(1+zw/n)^n$. 
\end{proof}

\subsection{The Schur-Hadamard Product and Convolution}

The following version of Theorem \ref{ly-orig} is usually referred to as 
the Lee-Yang ``circle theorem'', see, e.g., \cite{hink,ruelle5}.

\begin{theorem}\label{lee-yang} 
Let $A=(a_{ij})$ be a 
Hermitian $n\times n$ matrix whose entries are in the closed unit 
disk $\overline{\DD}$. Then the polynomial 
$$
f(z_1,\ldots, z_n)= \sum_{S \subseteq [n]} 
z^S\prod_{i \in S}\prod_{j \notin S}a_{ij} 
$$
is $\DD$-stable. In particular, $f(z, \ldots,z)$ has all its zeros on the 
unit circle. 
\end{theorem}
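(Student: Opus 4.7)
I would prove Theorem~\ref{lee-yang} via Asano's contraction method, recast in the operator/symbol language of Part~A. For each unordered pair $\{i,j\}$ with $i<j$, introduce two fresh variables $u_{ij}, v_{ij}$ (viewed as ``spin contributions'' of vertices $i$ and $j$ on the edge $\{i,j\}$) and set
$$E_{ij}(u_{ij}, v_{ij}) = 1 + a_{ij} u_{ij} + a_{ji} v_{ij} + u_{ij}v_{ij}.$$
A short computation using $a_{ji}=\overline{a_{ij}}$ yields the identity
$$|1 + a_{ji}v|^2 - |v + a_{ij}|^2 = (1-|a_{ij}|^2)(1-|v|^2).$$
Solving $E_{ij}=0$ for $u_{ij}$ and invoking this identity shows that $|u_{ij}|\ge 1$ whenever $|a_{ij}|\le 1$ and $|v_{ij}|\le 1$, so each $E_{ij}$ is $\DD$-stable in its two variables, and the product $G=\prod_{i<j}E_{ij}$ is $\DD$-stable in all $n(n-1)$ variables as a product of stable polynomials in disjoint variables.

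The main step is iterated Asano contraction. For each vertex $k\in[n]$, collapse the $n-1$ variables attached to $k$ -- namely $u_{kj}$ for $j>k$ and $v_{ik}$ for $i<k$ -- into a single variable $z_k$ by repeatedly applying the operation sending a two-variable multi-affine polynomial $P=\alpha+\beta u+\gamma v+\delta uv$ (with coefficients in the remaining variables) to $P^A=\alpha+\delta z$. This is realized by the linear operator $T$ with $T(1)=1$, $T(u)=T(v)=0$, $T(uv)=z$; applying Theorem~\ref{open-disk} (with the remaining variables extended trivially), the algebraic symbol is
$$T[(1+uw_1)(1+vw_2)]=1+zw_1w_2,$$
which is manifestly $\DD$-stable since $|zw_1w_2|<1$ on $\DD^3$. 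Hence $T$ preserves $\DD$-stability, and iterating it over all vertices yields a $\DD$-stable multi-affine polynomial in $z_1,\ldots,z_n$. A bookkeeping check then shows that this polynomial is exactly $f$: at each vertex $k$ the contractions retain only monomials in $G$ that use ``all of'' or ``none of'' the variables attached to $k$, so for each $S\subseteq[n]$ the surviving contribution from edge $\{i,j\}$ (with $i<j$) to $z^S$ is $1$ when $i,j\notin S$ or $i,j\in S$, $a_{ij}$ when $i\in S,j\notin S$, and $a_{ji}$ when $j\in S,i\notin S$; the product over all edges is $\prod_{i\in S,j\notin S} a_{ij}$, matching the stated coefficient of $z^S$ in $f$.

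The ``in particular'' clause follows by symmetry: by Corollary~\ref{inv} together with the Hermitian condition, $I_{(1^n)}(f)$ is (up to coefficient-conjugation) the Lee-Yang polynomial for $\overline{A}=A^T$, itself a Hermitian matrix with entries in $\overline{\DD}$, so $I_{(1^n)}(f)$ is also $\DD$-stable by the same argument. Consequently $f$ is both $\DD$-stable and $\CC\setminus\overline{\DD}$-stable, and the diagonal $f(z,\ldots,z)$ cannot vanish for $|z|\neq 1$, placing every one of its zeros on the unit circle.

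\textbf{Main obstacle.} The conceptual core is recognizing Asano's contraction as a linear operator whose $\DD$-stability preservation is immediate from the symbol criterion of Theorem~\ref{open-disk}; classical proofs of Asano's lemma require direct manipulation of zero loci. Once this is in place, the first step (verifying $E_{ij}$ is $\DD^2$-stable) and the combinatorial bookkeeping identifying the contraction output with $f$ are mechanical.
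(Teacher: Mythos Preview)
Your proof is correct and takes a route genuinely different from the paper's. The paper follows Hinkkanen's argument (Remark~\ref{hink-proof}): for each pair $i<j$ it forms the $n$-variable multi-affine polynomial $f_{ij}=(1+a_{ij}z_i+\overline{a_{ij}}z_j+z_iz_j)\prod_{k\neq i,j}(1+z_k)$, observes each $f_{ij}$ is $\DD$-stable, and then takes the Schur--Hadamard product $f_{12}\bullet\cdots\bullet f_{(n-1)n}$, invoking Theorem~\ref{hct} (itself proved via the symbol criterion Theorem~\ref{open-disk}) to conclude that the result---which one checks coefficient-by-coefficient equals $f$---is $\DD$-stable. You instead use the Asano route: you place the same bivariate factors on \emph{disjoint} pairs of auxiliary variables, multiply to get a $\DD$-stable polynomial in $n(n-1)$ variables, and then contract down to $n$ variables via iterated applications of Lemma~\ref{l-asano} (also proved via Theorem~\ref{open-disk}). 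Both methods rest on the same two pillars---the elementary $\DD$-stability of $1+au+\bar a v+uv$ and a $\DD$-stability--preserving combination operator certified by the symbol criterion---so neither is intrinsically deeper; Hinkkanen's avoids introducing auxiliary variables and is slicker combinatorially, while yours makes the classical Asano--Ruelle mechanism explicit. Your treatment of the ``in particular'' clause via $I_{(1^n)}$ and the Hermitian symmetry $A^T=\overline{A}$ is a welcome addition, since Remark~\ref{hink-proof} stops at $\DD$-stability and does not spell out why the diagonal zeros lie on the unit circle.
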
 

Hinkkanen's proof \cite{hink} of Theorem \ref{lee-yang} makes use of a 
composition theorem for the {\em Schur-Hadamard product} of 
multi-affine polynomials which is defined as follows: if 
$f(z)=\sum_{S \subseteq [n]} a(S)z^S$ and 
$g(z)=\sum_{S \subseteq [n]} b(S)z^S$ then 
$$
(f\bullet g)(z) = \sum_{S \subseteq [n]} a(S)b(S)z^S. 
$$

The next result is Hinkkanen's composition theorem \cite[Theorem C]{hink}.

\begin{theorem}\label{hct}
Let $f,g \in \CC_{(1^n)}[z_1, \ldots, z_n]$. If $f,g$ are $\DD$-stable 
then so is $f \bullet g$ unless it is identically zero. 
\end{theorem}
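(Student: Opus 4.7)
The plan is to interpret $g$ as inducing a linear operator on multi-affine polynomials and then apply the symbol characterization in Theorem~\ref{open-disk} for the open unit disk. Writing $g(z) = \sum_{S \subseteq [n]} b(S) z^S$, I define the linear operator
$$
T : \CC_{(1^n)}[z_1, \ldots, z_n] \longrightarrow \CC_{(1^n)}[z_1, \ldots, z_n], \qquad T(z^S) = b(S) z^S, \quad S \subseteq [n],
$$
so that $T(f) = f \bullet g$ for every multi-affine $f$. It therefore suffices to show that $T$ preserves $\DD$-stability.

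The key step is the computation of the algebraic symbol $T[(1+zw)^{(1^n)}]$ prescribed by Theorem~\ref{open-disk}. Since $(1+zw)^{(1^n)} = \prod_{i=1}^n (1 + z_i w_i) = \sum_{S \subseteq [n]} z^S w^S$, applying $T$ in the $z$-variables yields
$$
T[(1+zw)^{(1^n)}] = \sum_{S \subseteq [n]} b(S) z^S w^S = g(z_1 w_1, \ldots, z_n w_n).
$$
Now this is manifestly $\DD$-stable in the $2n$ variables $(z_1, \ldots, z_n, w_1, \ldots, w_n)$: if every $|z_i|<1$ and $|w_i|<1$ then $|z_i w_i|<1$, so $(z_1 w_1, \ldots, z_n w_n) \in \DD^n$, and $g(z_1 w_1, \ldots, z_n w_n) \neq 0$ because $g$ is assumed $\DD$-stable.

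Consequently alternative (b) of Theorem~\ref{open-disk} holds for $T$, so $T$ preserves $\DD$-stability, and $f \bullet g = T(f)$ is $\DD$-stable or identically zero whenever $f$ is $\DD$-stable. In this approach there is no genuine obstacle; the only point to mention is the degenerate case where $g \equiv 0$ (trivially $f \bullet g \equiv 0$) and the observation that the codomain restriction to $\CC_{(1^n)}[z_1, \ldots, z_n]$ is harmless since Theorem~\ref{open-disk} is stated for operators into the full polynomial ring. The whole argument thus reduces Theorem~\ref{hct} to a direct inspection of the symbol $g(z_1 w_1, \ldots, z_n w_n)$.
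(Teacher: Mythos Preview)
Your proof is correct and follows essentially the same approach as the paper: fix $g$, define $T(f)=f\bullet g$, compute the symbol $T[(1+zw)^{(1^n)}]=g(z_1w_1,\ldots,z_nw_n)$, observe it is $\DD$-stable in $2n$ variables, and invoke Theorem~\ref{open-disk}. Your write-up is a bit more detailed in justifying the $\DD$-stability of the symbol and in handling the degenerate case, but the argument is the same.
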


\begin{proof}
Let $g$ be a fixed $\DD$-stable multi-affine polynomial in $n$ variables and 
let $T$ be the linear transformation on multi-affine polynomials in $n$ 
variables given by $T(f)=f \bullet g$. Recall Theorem \ref{open-disk} (b). 
The symbol of $T$ is 
$$
T\!\left[(1+zw)^{[n]}\right]=g(z_1w_1, \ldots, z_nw_n),
$$ 
which is clearly $\DD$-stable (in $2n$ variables). 
Theorem~\ref{open-disk} yields the result.
\end{proof}

\begin{remark}\label{hink-proof}
The proof of Theorem \ref{lee-yang} given in \cite{hink} is as follows.  
For $i,j\in [n]$ with $i<j$ (note the typo ``$i\neq j$'' in \cite{hink}) let 
$$
f_{ij}(z_1,\ldots,z_n)=(1+a_{ij}z_i + \overline{a_{ij}}z_j+z_i z_j)
\prod_{k \in [n]\setminus \{i,j\}}(1+z_k). 
$$
It is not hard to see that $f_{ij}$ is $\DD$-stable and by taking the 
Schur-Hadamard product of all these polynomials one gets
$$
(f_{12} \bullet \cdots \bullet f_{(n-1) n})(z)= \sum_{S \subseteq [n]} 
z^S\prod_{i \in S}\prod_{j \notin S}a_{ij}, 
$$
which is again $\DD$-stable by Theorem~\ref{hct}.
\end{remark}

Using Corollary \ref{master-comp} we can extend Hinkkanen's 
composition theorem to arbitrary (not necessarily multi-affine) $\DD$-stable 
polynomials:

\begin{theorem}\label{g-compo}
Let $f(z)=\sum_{\alpha \leq \kappa}\binom \kappa \alpha a(\alpha)z^\alpha$ 
and $g(z)=\sum_{\alpha \leq \kappa}\binom \kappa \alpha b(\alpha)z^\alpha$ be 
$\DD$-stable polynomials. Then so is 
$$
(f\bullet g)(z):=
\sum_{\alpha \leq \kappa}\binom \kappa \alpha a(\alpha)b(\alpha)z^\alpha 
$$
unless it is identically zero.
\end{theorem}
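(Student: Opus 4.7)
The plan is to reduce the theorem to a single application of Corollary~\ref{master-comp}(c) by packaging $f$ and $g$ as polynomials in $2n$ variables that fit the two templates in that corollary and whose ``diagonal'' product is exactly $(f\bullet g)(z)$.

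First I would look for test polynomials $F(z,w),G(z,w)\in\CC_\kappa[z_1,\ldots,z_n,w_1,\ldots,w_n]$ of the forms
$$
F(z,w)=\sum_{\alpha\leq\kappa}\binom{\kappa}{\alpha}P_\alpha(w)z^\alpha,\qquad
G(z,w)=\sum_{\alpha\leq\kappa}\binom{\kappa}{\alpha}Q_\alpha(z)w^\alpha,
$$
chosen so that $P_\alpha(w)Q_\alpha(z)=a(\alpha)b(\alpha)z^\alpha$ for every $\alpha\leq\kappa$. The natural (and essentially unique) choice that achieves this is
$$
F(z,w):=f(z),\qquad G(z,w):=g(z_1w_1,\ldots,z_nw_n),
$$
which yields $P_\alpha(w)=a(\alpha)$ (constant in $w$) and $Q_\alpha(z)=b(\alpha)z^\alpha$. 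Next I would verify that both polynomials are $\DD$-stable as elements of $\CC[z,w]$: the polynomial $F$ does not depend on $w$, so its $\DD$-stability in $\CC^{2n}$ is equivalent to the $\DD$-stability of $f$; for $G$, note that if $(z,w)\in\DD^{2n}$ then $|z_iw_i|<1$ for each $i$, so $(z_1w_1,\ldots,z_nw_n)\in\DD^n$ and the hypothesis on $g$ forces $G(z,w)\neq 0$.

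With these inputs in hand, Corollary~\ref{master-comp}(c) applied to $F$ and $G$ yields that
$$
\sum_{\alpha\leq\kappa}\binom{\kappa}{\alpha}P_\alpha(w)Q_\alpha(z)
=\sum_{\alpha\leq\kappa}\binom{\kappa}{\alpha}a(\alpha)b(\alpha)z^\alpha
=(f\bullet g)(z)
$$
is either $\DD$-stable in $\CC^{2n}$ or identically zero. Since the resulting polynomial depends only on $z$, its $\DD$-stability in $2n$ variables is equivalent to $\DD$-stability as a polynomial in $n$ variables, which is exactly the conclusion of the theorem. I do not anticipate any real obstacle here: the only creative step is the ``separation of variables'' trick of composing $g$ with the monomials $z_iw_i$ in order to convert the Schur--Hadamard diagonal $\sum a(\alpha)b(\alpha)z^\alpha$ into a master-composition-theorem pairing, and the stability check for this composition is immediate from $\DD\cdot\DD\subset\DD$.
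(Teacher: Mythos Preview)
Your proposal is correct and is exactly the paper's proof: apply Corollary~\ref{master-comp}(c) to $f(z)$ and $g(z_1w_1,\ldots,z_nw_n)$. You have merely spelled out the verifications (that $g(zw)$ is $\DD$-stable because $\DD\cdot\DD\subset\DD$, and that the output depends only on $z$) which the paper leaves implicit.
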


\begin{proof}
Apply Corollary \ref{master-comp} (c) 
to the $\DD$-stable polynomials $f(z)$ and 
$g(zw)$, where $zw=(z_1w_1,\ldots,z_nw_n)$. 
\end{proof}

Closely related to the Schur-Hadamard product is the {\em convolution 
operator} on multi-affine polynomials \cite{COSW} defined as follows: if 
$f(z)=\sum_{S \subseteq [n]} a(S)z^S$ and 
$g(z)=\sum_{S \subseteq [n]} b(S)z^S$ then 
$$
(f\star g)(z) = \sum_{S,T \subseteq [n]} a(S)b(T)z^{S\Delta T}, 
$$ 
where $S\Delta T=(S\cup T)\setminus (S\cap T)$. A corresponding composition 
result -- this time for weak Hurwitz stability -- is given in 
\cite[Proposition 4.20]{COSW}.

\begin{theorem}\label{t-conv}
Let $f,g \in \CC_{(1^n)}[z_1, \ldots, z_n]$. If $f,g$ are 
weakly Hurwitz stable then so is $f \star g$  
unless it is identically zero. 
\end{theorem}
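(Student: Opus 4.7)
My plan is to mimic the proof of Hinkkanen's composition theorem (Theorem \ref{hct}) by viewing the convolution as a linear operator $T_g : f \mapsto f \star g$ on $\CC_{(1^n)}[z_1,\ldots,z_n]$ with $g$ fixed and weakly Hurwitz stable, then apply Theorem \ref{open-disk} with $C=\bH_{\frac{\pi}{2}}$. If I can show that the algebraic symbol $T_g[(1+zw)^{(1^n)}]$ is weakly Hurwitz stable in $2n$ variables, Theorem \ref{open-disk} immediately gives that $T_g$ preserves weak Hurwitz stability, i.e.\ $f\star g$ is weakly Hurwitz stable or identically zero whenever $f$ is.

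First I would compute the symbol. Writing $g=\sum_T b(T)z^T$, one has $T_g(z^S)=\sum_T b(T)z^{S\Delta T}$, so the $2^n$-fold sum $\sum_S T_g(z^S)w^S$ can be reorganized as a sum over $T$ of products that factor coordinate-by-coordinate (at index $i$ one picks up $1+z_iw_i$ if $i\notin T$ and $z_i+w_i$ if $i\in T$). I expect this to yield the compact closed form
$$
T_g[(1+zw)^{(1^n)}]=\prod_{i=1}^n(1+z_iw_i)\cdot g\!\left(\frac{z_1+w_1}{1+z_1w_1},\ldots,\frac{z_n+w_n}{1+z_nw_n}\right),
$$
which is a genuine polynomial precisely because $g$ is multi-affine.

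Next I would verify that this symbol is nonvanishing on $\bH_{\frac{\pi}{2}}^{2n}$. For $z,w\in\bH_{\frac{\pi}{2}}$ we have $1+zw\ne 0$ (since $zw=-1$ would force $\Re(w)=\Re(-1/z)<0$), so the prefactor is harmless, and all I really need is that the ``tanh addition map'' $(z,w)\mapsto (z+w)/(1+zw)$ sends $\bH_{\frac{\pi}{2}}^2$ into $\bH_{\frac{\pi}{2}}$. This should reduce to the one-line computation
$$
\Re\!\left(\frac{z+w}{1+zw}\right)=\frac{\Re(z)\bigl(1+|w|^2\bigr)+\Re(w)\bigl(1+|z|^2\bigr)}{|1+zw|^2},
$$
which is manifestly positive. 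Then weak Hurwitz stability of $g$ makes $g\bigl((z_i+w_i)/(1+z_iw_i)\bigr)\neq 0$, and the symbol is seen to be weakly Hurwitz stable.

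I do not anticipate any serious obstacle. The argument is a direct analogue of the proof of Theorem \ref{hct}, in which the map $(z,w)\mapsto zw$ (preserving the unit disk and underlying the Schur-Hadamard product) is replaced by the M\"obius ``addition law'' $(z,w)\mapsto(z+w)/(1+zw)$ (preserving the right half-plane and underlying the symmetric-difference convolution). The only subtlety is keeping in mind that the symbol criterion of Theorem \ref{open-disk}, applied with $C=\bH_{\frac{\pi}{2}}$, automatically absorbs the ``unless identically zero'' exception through its definition of preservation.
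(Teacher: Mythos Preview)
Your proposal is correct and follows essentially the same approach as the paper: fix $g$, view $f\mapsto f\star g$ as a linear operator on multi-affine polynomials, compute its algebraic symbol, and verify that the symbol is weakly Hurwitz stable. The only cosmetic difference is that you invoke Theorem~\ref{open-disk} (symbol $T[(1+zw)^{(1^n)}]$), obtaining $g\bigl((z_i+w_i)/(1+z_iw_i)\bigr)$, whereas the paper invokes Theorem~\ref{open-halfplane} (symbol $T[(z+w)^{[n]}]$), obtaining the reciprocal expression $g\bigl((1+z_iw_i)/(z_i+w_i)\bigr)$; since $\bH_{\frac{\pi}{2}}$ is closed under reciprocals these are equivalent, and your direct real-part computation plays the same role as the paper's identity $(1+uv)/(u+v)=(u+v)^{-1}+(u^{-1}+v^{-1})^{-1}$.
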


\begin{proof}
Let $g$ be a fixed $\bH_{\frac{\pi}{2}}$-stable multi-affine polynomial in 
$n$ variables and 
let $T$ be the linear transformation on multi-affine polynomials in $n$ 
variables given by $T(f)=f \star g$. The symbol of $T$ 
(cf.~Theorem \ref{open-halfplane}) is just 
\begin{equation}\label{eq-conv}
i^nT\!\left[(z+w)^{[n]}\right]=(z+w)^{[n]}
g\!\left(\frac{1+z_1w_1}{z_1+w_1}, \ldots,\frac{1+z_nw_n}{z_n+w_n}\right).
\end{equation}
Now if $u,v\in \bH_{\frac{\pi}{2}}$ then also $u^{-1},v^{-1},u+v\in 
\bH_{\frac{\pi}{2}}$ hence 
$$
\frac{1+uv}{u+v}=(u+v)^{-1}+\left(u^{-1}+v^{-1}\right)^{-1}\in 
\bH_{\frac{\pi}{2}},
$$
so that the polynomial in \eqref{eq-conv} is $\bH_{\frac{\pi}{2}}$-stable 
(in $2n$ variables). 
Theorem~\ref{open-halfplane} again yields the desired conclusion.
\end{proof}

\subsection{Asano Contractions}\label{ss-asano}

Many known proofs of the Lee-Yang theorem are based on so-called 
{\em Asano contractions} or variations thereof 
\cite{As,ruelle3,ruelle4,ruelle5}. Let 
$$
f(z_1,\ldots, z_n)=a(z_3,\ldots, z_n)+ b(z_3,\ldots, z_n)z_1 
+c(z_3,\ldots, z_n)z_2+d(z_3,\ldots, z_n)z_1z_2
$$
be a polynomial in $n\ge 2$ variables which is multi-affine in $z_1$ and 
$z_2$. The Asano contraction of $f$ is 
$$
A(f)(z_1,\ldots, z_n)= a(z_3,\ldots, z_n)+d(z_3,\ldots, z_n)z_1.
$$
Note that $A(f)$ does not depend on $z_2$. The key fact used in the 
aforementioned proofs is a property of Asano 
contractions that may be stated as follows.

\begin{lemma}\label{l-asano}
Let $\kappa=(\kappa_1,\ldots,\kappa_n)\in \NN^n$ with $n\ge 2$ and
$\kappa_1=\kappa_2=1$. Then 
$$
A : \CC_\kappa[z_1,\ldots, z_n] \rightarrow \CC_\kappa[z_1, \ldots, z_n]
$$
is a linear operator that preserves $\DD$-stability.
\end{lemma}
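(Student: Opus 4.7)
The plan is to apply Theorem~\ref{open-disk} directly: since $A$ is a linear operator $\CC_\kappa[z_1,\ldots,z_n]\to \CC_\kappa[z_1,\ldots,z_n]$ whose range is obviously more than one-dimensional, it preserves $\DD$-stability if and only if its algebraic symbol $A[(1+zw)^\kappa]$ (as in \eqref{stablesymb-d}) is $\DD$-stable as a polynomial in the $2n$ variables $z_1,\ldots,z_n,w_1,\ldots,w_n$. So the whole proof reduces to writing down this symbol and checking it does not vanish on $\DD^{2n}$.

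First I would unpack the definition of $A$ at a monomial $z^\alpha=z_1^{\alpha_1}z_2^{\alpha_2}z^{\alpha'}$ (with $\alpha_1,\alpha_2\in\{0,1\}$ and $\alpha'=(\alpha_3,\ldots,\alpha_n)$): by inspection $A(z^\alpha)=z^{\alpha'}$ if $(\alpha_1,\alpha_2)=(0,0)$, $A(z^\alpha)=z_1z^{\alpha'}$ if $(\alpha_1,\alpha_2)=(1,1)$, and $A(z^\alpha)=0$ otherwise. Factoring $\binom{\kappa}{\alpha}=\binom{1}{\alpha_1}\binom{1}{\alpha_2}\prod_{i\ge 3}\binom{\kappa_i}{\alpha_i}$ in the symbol, the cross terms with exactly one of $\alpha_1,\alpha_2$ equal to $1$ drop out, and the remaining two groups sum to
\[
A\!\left[(1+zw)^\kappa\right]=(1+z_1w_1w_2)\prod_{i=3}^{n}(1+z_iw_i)^{\kappa_i}.
\]

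The second step is to verify that this polynomial is $\DD$-stable in the $2n$ variables $z_1,\ldots,z_n,w_1,\ldots,w_n$. This is immediate: for any point in $\DD^{2n}$ we have $|w_1w_2z_1|<1$ so $1+z_1w_1w_2\neq 0$, and similarly $|z_iw_i|<1$ so each factor $1+z_iw_i$ is non-zero. Therefore the symbol is $\DD$-stable, Theorem~\ref{open-disk}(b) applies, and $A$ preserves $\DD$-stability.

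There is really no hard step here; the only mild point to be careful about is the bookkeeping in expanding $A[(1+zw)^\kappa]$ so that the factorization $(1+z_1w_1w_2)\prod_{i\ge 3}(1+z_iw_i)^{\kappa_i}$ comes out cleanly. (Note in particular that the symbol is independent of $z_2$, as it must be since $A(f)$ does not depend on $z_2$, but it still depends on $w_2$.) Once that identity is in hand, the stability check is essentially a one-line estimate.
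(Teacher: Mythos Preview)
Your proof is correct and follows exactly the paper's approach: compute the algebraic symbol $A[(1+zw)^\kappa]=(1+z_1w_1w_2)\prod_{i\ge 3}(1+z_iw_i)^{\kappa_i}$ and observe it is $\DD$-stable, then invoke Theorem~\ref{open-disk}. The paper's proof is the same, just more terse.
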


\begin{proof}
It is clear that $A$ is linear. Its (algebraic) symbol is 
$$
A[(1+zw)^\kappa]= (1+zw)^{(\kappa_3,\ldots,\kappa_n)}(1+z_1w_1w_2), 
$$
which is $\DD$-stable, so the assertion follows from 
Theorem~\ref{open-disk}. 
\end{proof}

\subsection{Multi-Affine Part and Folding mod $2$}\label{ss-map}
Recall that a matching in a graph $G=(V,E)$ is a subset $M$ of $E$ such that 
no vertex of the graph $(V,M)$ has 
 degree exceeding one. The general version of the 
Heilmann-Lieb theorem on 
the {\em monomer-dimer model} is the following. 

\begin{theorem}[Heilmann-Lieb \cite{HL}]\label{heil-lie}
Let $G=(V,E)$ be a loopless graph and define its matching polynomial with
edge weights $\{\lambda_e\}_{e\in E}$ and vertex weights $\{z_i\}_{i\in V}$ as
$$
M_G(z,\lambda)=\sum_{\text{matchings $M$}}\prod_{e=ij\in M}\lambda_ez_iz_j.
$$
If $\lambda_e\ge 0$, $e\in E$, then $M_G(z,\lambda)$ is a weakly Hurwitz 
stable polynomial (in $z$).
\end{theorem}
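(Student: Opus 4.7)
The plan is to realize $M_G(z,\lambda)$ as the multi-affine part of the ``universal'' product polynomial
\begin{equation*}
P(z):=\prod_{e=ij\in E}(1+\lambda_e z_iz_j)=\sum_{F\subseteq E}\Bigl(\prod_{e\in F}\lambda_e\Bigr)z^{\chi(F)},
\end{equation*}
where $\chi(F)\in\NN^n$ (with $n=|V|$) records the degree of each vertex in the edge-subgraph $(V,F)$. A subset $F\subseteq E$ is a matching precisely when $\chi(F)\le(1^n)$, so projecting $P$ onto its multi-affine monomials yields exactly $M_G(z,\lambda)$. The proof therefore reduces to two steps: (i) establishing weak Hurwitz stability of $P$, and (ii) showing that the multi-affine projection operator $\MAP$ preserves weak Hurwitz stability.

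For (i), I would argue factor-by-factor. Each $1+\lambda_e z_iz_j$ with $\lambda_e\ge 0$ is weakly Hurwitz stable: writing $z_i=r_ie^{i\theta_i}$ with $\theta_i\in(-\pi/2,\pi/2)$, the product $z_iz_j=r_ir_je^{i(\theta_i+\theta_j)}$ has argument strictly in $(-\pi,\pi)$ and so avoids the non-positive real axis; in particular $z_iz_j\ne -1/\lambda_e$. Since non-vanishing is closed under products, $P$ inherits the property from its factors.

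For step (ii), let $\kappa=(d(1),\ldots,d(n))\in\NN^n$, where $d(i)$ denotes the degree of the $i$th vertex, so $P\in\CC_\kappa[z_1,\ldots,z_n]$. Define
\begin{equation*}
\MAP:\CC_\kappa[z_1,\ldots,z_n]\to\CC_\kappa[z_1,\ldots,z_n],\qquad \MAP(z^\alpha)=\begin{cases}z^\alpha & \text{if }\alpha\le(1^n),\\ 0 & \text{otherwise.}\end{cases}
\end{equation*}
Its algebraic symbol for $\bH_{\frac{\pi}{2}}$, in the sense of Theorem~\ref{open-disk}, factors as
\begin{equation*}
\MAP\!\left[(1+zw)^\kappa\right]=\prod_{i:\, d(i)\ge 1}\bigl(1+d(i)\,z_iw_i\bigr),
\end{equation*}
each of whose factors is weakly Hurwitz stable by the same reasoning as in (i), now with coupling constant $d(i)\ge 0$. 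Theorem~\ref{open-disk} then forces $\MAP$ to preserve weak Hurwitz stability, and since $\MAP(P)=M_G(z,\lambda)$ is not identically zero (the empty matching contributes $1$), it is weakly Hurwitz stable.

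The substantive step is (ii): the symbolic characterization of stability preservers from \cite{BB-I} converts what would otherwise be a delicate combinatorial stability statement into the trivial weak Hurwitz stability of the bilinear factors $1+cz_iw_i$ with $c\ge 0$. The ``folding mod $2$'' counterpart hinted at by the section title should proceed along identical lines, merely replacing $\MAP$ by the operator $z_i^{a_i}\mapsto z_i^{a_i\bmod 2}$ and recomputing the symbol.
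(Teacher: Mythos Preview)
Your proof is correct and follows essentially the same strategy as the paper: realize $M_G$ as $\MAP$ applied to the product $\prod_{e}(1+\lambda_e z_iz_j)$, and verify that $\MAP$ preserves weak Hurwitz stability via its symbol. The only difference is cosmetic: you compute the \emph{algebraic} symbol of $\MAP$ on $\CC_\kappa[z_1,\ldots,z_n]$ (obtaining $\prod_i(1+d(i)z_iw_i)$ and invoking Theorem~\ref{open-disk}), whereas the paper computes the \emph{transcendental} symbol of $\MAP$ on all of $\CC[z_1,\ldots,z_n]$ (obtaining $(1+zw)^{[n]}=\prod_i(1+z_iw_i)$ and invoking Theorem~\ref{multi-infinite-stab} with Remark~\ref{lhp}); both symbols are products of factors $1+cz_iw_i$ with $c\ge 0$ and the conclusion is identical.
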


In \cite{COSW} and \cite[\S 5]{sok} it was shown that the Lee-Yang 
and Heilmann-Lieb theorems can actually be given a unified combinatorial 
formulation and proof. The idea is to form the ``test''
polynomial 
\begin{equation}\label{test-p}
F_G(z,\lambda)=
 \prod_{e=\{i,j\} \in E} (1+\lambda_e z_i z_j)
\end{equation}
associated to a graph $G=(V,E)$, $|V|=n$, equipped with vertex weights 
$\{z_i\}_{i\in V}$ and non-negative edge 
weights $\{\lambda_e\}_{e\in E}$. This 
polynomial is weakly Hurwitz stable in the $z_i$'s 
and by applying to it appropriate 
linear operators one gets precisely the polynomials occurring in the 
Lee-Yang theorem and the Heilmann-Lieb theorem, respectively. Thus one only
has to check that the linear operators used in this process 
preserve weak Hurwitz stability. These operators are defined as 
follows.

The linear operator $\MAP:\CC[z_1,\ldots,z_n]\to\CC_{(1^n)}[z_1,\ldots,z_n]$  
extracts the multi-affine part of a polynomial, that is, if 
$f(z) = \sum_{\alpha \in \NN^n} a(\alpha)z^\alpha$ then  
$$
\MAP(f)(z)=\sum_{\alpha:\,\alpha_i \leq 1,\,i\in [n]} a(\alpha) z^\alpha. 
$$
The transcendental symbol (see Remark \ref{lhp}) of $\MAP$ is 
$$
\sum_{\alpha:\,\alpha_i \leq 1,\,i\in [n]} 
 z^\alpha \frac {w^\alpha}{\alpha!}=(1+zw)^{[n]}. 
$$
Clearly, this is a weakly Hurwitz stable polynomial. 
Theorem \ref{multi-infinite-stab} and Remark \ref{lhp}
imply that $\MAP$ preserves weak Hurwitz stability. It is easy to see that 
\begin{equation}\label{eq-map}
M_G(z,\lambda)= \MAP\Big[F_G(z,\lambda)\Big], 
\end{equation}
which yields the Heilmann-Lieb theorem. 


The linear operator $\MOD:\CC[z_1,\ldots,z_n]\to\CC_{(1^n)}[z_1,\ldots,z_n]$ 
``folds mod 2'' the powers in the Taylor expansion
of a polynomial, i.e.,  if 
$f(z) = \sum_{\alpha \in \NN^n} a(\alpha)z^\alpha$ then  
$$
\MOD(f)(z)=\sum_{\alpha \in \NN^n} a(\alpha)z^{\alpha \bmod 2},
$$
where $\alpha \bmod 2=(\alpha_1 \bmod 2,\dots,\alpha_n \bmod 2)$. 
The algebraic symbol of $\MOD$ (up to degree $\kappa$) with respect to the
open right-half plane $\bH_{\frac{\pi}{2}}$ (cf.~Theorem \ref{open-disk}) is
$$
\MOD[(1+zw)^\kappa]
=2^{-n}(1+w)^\kappa(1+z)^{[n]}\prod_{i=1}^{n}\!\left[ 1+ 
\left( \frac {1-w_i}{1+w_i}\right)^{\kappa_i} 
\frac {1-z_i}{1+z_i} \right].
$$
If $\Re(\zeta)>0$ then $|(1-\zeta)/(1+\zeta)| < 1$ so the above symbol is 
weakly Hurwitz stable. By Theorem \ref{open-disk} we conclude that 
$\MOD$ preserves weak Hurwitz stability. $\MOD$ is employed in 
\cite[\S 4.8]{COSW} to prove the Asano contraction
lemma (Lemma \ref{l-asano}) and thereby the Lee-Yang theorem as well 
(cf.~\S \ref{ss-asano} and \cite[\S 5]{sok}). 


\subsection{Generalizations of the Heilmann-Lieb Theorem}\label{ss-trunc}

It is natural to study graph polynomials with more 
general degree constraints than those defining the matching polynomial
and to establish Heilmann-Lieb type theorems for such polynomials.
This has been pursued by 
e.g.~Ruelle \cite{ruelle-g1,ruelle-g2} and Wagner \cite{W1,W2}. 
If $G=(V,E)$ is a 
graph with vertex set $V=\{v_1,\ldots, v_n\}$ we let $\deg G \in \NN^n$ be 
the {\em degree vector} of $G$, i.e., the $i$-th coordinate of $\deg G$ is 
the degree of $v_i$ in $G$. Let $\kappa=(\kappa_1,\ldots,\kappa_n)\in\NN^n$ and
suppose that $\deg G \leq \kappa$. Then given degree weights 
 $u : \NN^\kappa \rightarrow \CC$ and non-negative edge 
weights $\{\lambda_e\}_{e\in E}$ one may ask what are the
non-vanishing properties of the polynomial 
 \begin{equation}\label{mastergraph}
 F_G(z,\lambda, u) = \sum_{H \subseteq E}\lambda^H u(\deg(V,H))z^{\deg(V,H)}. 
 \end{equation}
This question was considered by Wagner 
in \cite{W2}. When it comes to weak Hurwitz stability it is natural 
(and of course sufficient) to require that the linear ``truncation'' operator 
$
T: \CC_\kappa[z_1,\ldots, z_n] \rightarrow \CC_\kappa[z_1,\ldots, z_n]
$ 
defined by 
 $T(z^\alpha)=u(\alpha)z^\alpha$, $u(\alpha)\in\CC$, $\alpha\le \kappa$, 
preserves weak Hurwitz stability. Now these are precisely the multivariate
multiplier sequences (up to degree $\kappa$) that 
were characterized in  
Corollary \ref{multi-Hurwitz} of this 
paper as follows:  
 $$
 u(\alpha) = u_1(\alpha_1)\cdots u_n(\alpha_n), \quad \alpha\le \kappa,
 $$
 where for each $i\in [n]$ the polynomial 
\begin{equation}\label{uik}
\sum_{k=0}^{\kappa_i} \binom  {\kappa_i} k u_i(k)z^k 
\end{equation}
has all real non-positive zeros. We thus 
recover the following generalization of the Heilmann-Lieb theorem due to 
Wagner \cite{W2}, which extends a theorem of Ruelle \cite{ruelle-g1}. By the 
necessity in Corollary \ref{multi-Hurwitz} the theorem below is optimal. 

\begin{theorem}[Wagner \cite{W2}]\label{trunc-w}
Let $G=(V,E)$ be a graph whose degree vector satisfies 
$\deg G \leq \kappa$ and let 
$F_G(z,\lambda, u)$ and $u$ be as in \eqref{mastergraph} and \eqref{uik}, 
respectively. If $\{\lambda_e\}_{e\in E}$  are non-negative edge 
weights  then 
\begin{itemize}
\item[(a)] $F_G(z,\lambda, u)$ is weakly Hurwitz stable considered as a 
polynomial in $z$; 
\item[(b)] All zeros of the univariate polynomial 
$$
\sum_{k=0}^{|E|} \left(\sum_{\stackrel{H \subseteq E}
{ \mbox{\tiny{$|H|=k$}}}} u(\deg(V,H)) \lambda^H\right) t^k
$$
are real and non-positive.
\end{itemize}
\end{theorem}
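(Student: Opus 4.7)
The plan is to realize $F_G(z,\lambda,u)$ as $T(F_G(z,\lambda))$ where
\[
F_G(z,\lambda) = \prod_{e=\{i,j\}\in E}(1+\lambda_e z_i z_j) = \sum_{H\subseteq E}\lambda^H z^{\deg(V,H)}
\]
is the test polynomial from \eqref{test-p} and $T:\CC_\kappa[z_1,\ldots,z_n]\to\CC_\kappa[z_1,\ldots,z_n]$ is the diagonal operator given by $T(z^\alpha)=u(\alpha)z^\alpha$ for $\alpha\le\kappa$. Since $\deg_{z_i} F_G(z,\lambda)\le\deg_G(v_i)\le\kappa_i$, the displayed monomial expansion gives $T(F_G(z,\lambda))=F_G(z,\lambda,u)$. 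For $\lambda_e\ge 0$ each factor $1+\lambda_e z_iz_j$ is weakly Hurwitz stable (a zero with $\Re z_i,\Re z_j>0$ would force $\Re z_i\,\Im z_j+\Re z_j\,\Im z_i=0$ and $\Re z_i\,\Re z_j-\Im z_i\,\Im z_j<0$, which is impossible), so the product $F_G(z,\lambda)$ is weakly Hurwitz stable as well.

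First I would prove (a) by showing that $T$ preserves weak Hurwitz stability. By Corollary \ref{multi-Hurwitz} this is equivalent to $u$ being, up to a constant complex multiple, a non-negative $\kappa$-multiplier sequence. The factorization hypothesis $u(\alpha)=u_1(\alpha_1)\cdots u_n(\alpha_n)$ together with Theorem \ref{kappa-mult} reduces this to each $u_i$ being a non-negative $\kappa_i$-multiplier sequence; by the one-variable case of Corollary \ref{multi-Hurwitz} (equivalently, by tracing its proof through Lemma \ref{multi-symbol} with $n=1$), this is in turn equivalent to each polynomial in \eqref{uik} having only real non-positive zeros, which is exactly our hypothesis. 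Hence $T$ preserves weak Hurwitz stability, so $F_G(z,\lambda,u)=T(F_G(z,\lambda))$ is weakly Hurwitz stable in $z$ (or identically zero), proving (a).

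For (b) I would specialize all vertex variables to one by setting $z_1=\cdots=z_n=s$. Since every edge $e\in H$ contributes exactly $2$ to $|\deg(V,H)|$, we obtain
\[
F_G(s,\ldots,s,\lambda,u) = \sum_{H\subseteq E}u(\deg(V,H))\lambda^H s^{2|H|} = P(s^2),
\]
where $P(t)$ is precisely the univariate polynomial in (b). By (a) the left-hand side is weakly Hurwitz stable in $s$, so for any zero $\tau$ of $P$ both square roots $\pm\sqrt{\tau}$ must satisfy $\Re(\pm\sqrt{\tau})\le 0$; this forces $\Re\sqrt{\tau}=0$, whence $\tau$ is real and non-positive, yielding (b).

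The main obstacle I anticipate is the one-variable translation carried out in the second paragraph: the equivalence between ``$u_i$ is a non-negative $\kappa_i$-multiplier sequence'' and ``$\sum_{k=0}^{\kappa_i}\binom{\kappa_i}{k}u_i(k)z^k$ has only real non-positive zeros'' is not recorded as a stand-alone statement in the paper and must be extracted from the chain Corollary \ref{multi-Hurwitz} $\to$ Theorem \ref{open-halfplane} $\to$ Lemma \ref{multi-symbol} restricted to $n=1$, together with the change of variable $t\mapsto -t$. Once this translation is in hand, the rest of the argument is a direct application of the symbol calculus already developed.
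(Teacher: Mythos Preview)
Your proof is correct and follows the same approach as the paper: realize $F_G(z,\lambda,u)$ as the diagonal operator $T(z^\alpha)=u(\alpha)z^\alpha$ applied to the weakly Hurwitz stable test polynomial \eqref{test-p}, invoke Corollary~\ref{multi-Hurwitz} for (a), and then diagonalize the vertex variables for (b). The only cosmetic difference is the substitution in (b): the paper sets $z_j=-it$ to obtain the real stable polynomial $\sum_k c_k(-1)^k t^{2k}=P(-t^2)$, whereas you set $z_j=s$ to obtain $P(s^2)$; both versions force $\tau\le 0$ by the same square-root argument. Your anticipated obstacle is not really one: the paragraph preceding Theorem~\ref{trunc-w} already records the translation from Corollary~\ref{multi-Hurwitz} to the condition on the polynomials \eqref{uik}, so you may simply cite that discussion rather than re-derive it.
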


\begin{proof}
The first statement follows from Corollary \ref{multi-Hurwitz} and the fact 
that the test polynomial defined in \eqref{test-p} is weakly Hurwitz stable. 
If we set all the $z_j$'s, $1\leq j \leq n$, equal to $-it$ we obtain the 
univariate polynomial (in $t$)  
$$
\sum_{k=0}^{|E|} \left(\sum_{\stackrel{H \subseteq E}
{ \mbox{\tiny{$|H|=k$}}}} u(\deg(V,H)) \lambda^H\right) (-1)^kt^{2k}
$$
which is then real stable. Clearly, this forces the polynomial in (b) to have 
all zeros real and 
non-positive. 
\end{proof}
 
\begin{remark}
In \cite{W2} Wagner also proves non-vanishing properties in 
sectors, which cannot be obtained by our methods. However, 
Theorem \ref{trunc-w} is slightly 
more general in that we consider max-degree at every vertex (not uniform 
max-degree).
\end{remark}

\section{Appendix}\label{app}
We give here a simple proof of the fact  that $\Sym$ can be viewed as a 
(convergent) infinite 
product of 
operations as those in Proposition~\ref{asymsym}. 
For $\sigma \in \sym_n$  define an operator 
$T_\sigma : \CC[z_1,\ldots, z_n] \rightarrow \CC[z_1,\ldots, z_n]$ by 
$$
T_\sigma(f) = \frac 1 {|\langle \sigma \rangle|} 
\sum_{\tau \in \langle \sigma \rangle} \tau(f),
$$
 where $\langle \sigma \rangle$ is the subgroup of $\sym_n$ 
generated by $\sigma$. Given $\alpha, \beta \in \NN^n$ we write 
$\alpha \sim \beta$ if $\alpha$ is a rearrangement of $\beta$. If 
 $f(z) = \sum_{\alpha \in \NN^n}a(\alpha)z^\alpha \in \RR[z_1,\ldots, z_n]$ 
let the {\em symmetry index} of $f$ be defined by 
 $
 \si(f)= \sum_{\alpha \sim \beta} |a(\alpha) -a(\beta)|. 
 $
For $f = g+ih \in \CC[z_1,\ldots, z_n]$ with $g,h \in  \RR[z_1,\ldots, z_n]$ 
we define its symmetry index as $\si(f)=\si(g)+\si(h)$. 
 Hence $\si(f)=0$ if and only if $f$ is symmetric. 

{\allowdisplaybreaks
\begin{lemma}\label{index}
Let $\sigma \in \sym_n$ and $f(z) 
= \sum_\alpha a(\alpha)z^\alpha \in \CC[z_1,\ldots, z_n]$. Then 
\begin{equation}\label{si}
\si(T_\sigma(f)) \leq \si(f) 
\end{equation}
with equality if and only if $a(\sigma(\alpha))=a(\alpha)$ for all 
$\alpha \in \NN^n$, i.e., $T_\sigma(f)=f$. 
\end{lemma}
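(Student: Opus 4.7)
The plan is to establish the inequality by a direct triangle-inequality argument on Taylor coefficients, and then force equality down to a single pair. First I would observe that $T_\sigma$ commutes with conjugation of coefficients and that $\si$ is additive on real and imaginary parts, so it suffices to treat $f\in\RR[z_1,\ldots,z_n]$.

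Let $m=|\langle\sigma\rangle|$ and, for $\tau\in\sym_n$, let $\tau\cdot\alpha\in\NN^n$ denote the permutation of $\alpha$'s coordinates with the normalization $\tau(z^\alpha)=z^{\tau\cdot\alpha}$. A direct computation then shows that the coefficient of $z^\alpha$ in $T_\sigma(f)$ equals
\[
b(\alpha)=\frac{1}{m}\sum_{k=0}^{m-1}a(\sigma^{-k}\cdot\alpha).
\]
Applying the triangle inequality pair-by-pair and summing,
\[
\si(T_\sigma f)=\sum_{\alpha\sim\beta}|b(\alpha)-b(\beta)|\le \frac{1}{m}\sum_{k=0}^{m-1}\sum_{\alpha\sim\beta}\bigl|a(\sigma^{-k}\cdot\alpha)-a(\sigma^{-k}\cdot\beta)\bigr|.
\]
For each fixed $k$ the map $(\alpha,\beta)\mapsto(\sigma^{-k}\cdot\alpha,\sigma^{-k}\cdot\beta)$ is a bijection on the set of ordered pairs with $\alpha\sim\beta$ (since the $\sym_n$-action preserves $\sim$), so each inner sum equals $\si(f)$, yielding $\si(T_\sigma f)\le\si(f)$.

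If equality holds, then the triangle inequality must be an equality for every individual pair with $\alpha\sim\beta$. I would specialize to $\beta:=\sigma\cdot\alpha$, which is trivially $\sim\alpha$. Setting $c_k:=a(\sigma^{-k}\cdot\alpha)$ and $d_k:=c_k-c_{k-1}$ (indices mod $m$), the equality condition for this particular pair reads
\[
\Bigl|\sum_{k=0}^{m-1}d_k\Bigr|=\sum_{k=0}^{m-1}|d_k|,
\]
so all $d_k$ share a common sign. But $\sum_k d_k$ telescopes cyclically to zero, which forces every $d_k=0$; in particular $a(\alpha)=a(\sigma\cdot\alpha)$. Since $\alpha$ was arbitrary this gives $\sigma(f)=f$, and applying $T_\sigma$ to both sides then yields $T_\sigma(f)=f$ (equivalently, since $T_\sigma$ is the projection onto the $\langle\sigma\rangle$-fixed subspace, $T_\sigma(f)=f$ is itself equivalent to $\sigma(f)=f$).

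The main technical point is the equality direction: converting a global equality in a large sum into the pointwise identity $a(\sigma\cdot\alpha)=a(\alpha)$. The cyclic telescoping of the $d_k$'s is what does the work, and it is exactly this exploitation of the $\langle\sigma\rangle$-orbit structure on coefficients that separates the equality case from the raw inequality.
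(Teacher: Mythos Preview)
Your proof is correct and follows essentially the same route as the paper's: reduce to real coefficients, bound $\si(T_\sigma f)$ by the triangle inequality together with the fact that $(\alpha,\beta)\mapsto(\sigma^{-k}\cdot\alpha,\sigma^{-k}\cdot\beta)$ is a bijection on $\sim$-pairs, and for the equality case specialize to $\beta=\sigma\cdot\alpha$ and exploit the cyclic orbit structure to force $a$ to be constant on $\langle\sigma\rangle$-orbits. The paper phrases the last step as a chain of inequalities $a(\alpha)\ge a(\sigma(\alpha))\ge\cdots\ge a(\alpha)$ coming from the common-sign condition, while you phrase it via the telescoping sum $\sum_k d_k=0$; these are the same argument in different clothing.
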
 

\begin{proof}
We may assume that $f \in \RR[z_1,\ldots,z_n]$. 
Since $\si(\tau(f))=\si(f)$ for all $\tau \in \sym_n$ we have by the 
triangle inequality 
\begin{eqnarray*}
\si(T_\sigma(f)) &=& \sum_{\alpha \sim \beta} 
\frac 1 {|\langle \sigma \rangle|} 
\Big| \sum_{\tau \in \langle \sigma \rangle} a(\tau(\alpha))
-a(\tau(\beta))\Big| \\
&\leq& \sum_{\alpha \sim \beta} \frac 1 {|\langle \sigma \rangle|} 
\sum_{\tau \in \langle \sigma \rangle} |a(\tau(\alpha))-a(\tau(\beta))| \\
&=& \frac 1 {|\langle \sigma \rangle|} 
\sum_{\tau \in \langle \sigma \rangle} \si(\tau(f)) \\ &=& \si(f) 
\end{eqnarray*}
with equality if and only if the following condition holds: 
\begin{equation*}
\text{If $\alpha \sim \beta$ then  $a(\tau(\alpha))-a(\tau(\beta))$ have 
the same sign for all $\tau \in \langle \sigma \rangle$.}\tag{A} 
\end{equation*}
Clearly, if $a(\sigma(\alpha))=a(\alpha)$ for all $\alpha \in \NN^n$ then 
equality holds in \eqref{si}. On the other hand,  
if equality in \eqref{si} holds let $\beta=\sigma(\alpha)$ and assume that 
$a(\alpha) \geq a(\beta)$ (the case $a(\alpha) \leq a(\beta)$ follows 
similarly). Then  by (A) we have 
$$
a(\alpha) \geq a(\sigma(\alpha)),    \quad a(\sigma(\alpha)) \geq 
a(\sigma^2(\alpha)),   \quad \ldots,  \quad  a(\sigma^{k-1}(\alpha)) \geq 
a(\alpha), 
$$ 
where $k$ is the order of $\sigma$. Hence $\alpha \mapsto a(\alpha)$ is 
constant on $\langle \sigma \rangle$-orbits, which completes the proof.
\end{proof}}

For $f \in \CC[z_1,\ldots, z_n]$ let 
$$
\Trp(f) =\{ T_{\tau_k} \cdots T_{\tau_1} (f) : 
\tau_1, \ldots, \tau_k \in \sym_n \mbox{ are transpositions}\}   
$$
and denote by $\overline{\Trp}(f)$ the set of polynomials that are limits, 
uniformly on compact sets, of polynomials in $\Trp(f)$. 

\begin{lemma}\label{sym-lim}
If $f \in \CC[z_1,\ldots, z_n]$ then  $\Sym(f) \in \overline{\Trp}(f)$. 
\end{lemma}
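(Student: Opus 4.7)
The plan is to use the symmetry index $\si$ from Lemma~\ref{index} as a Lyapunov function on a suitable compact set, pick a minimizer, show it must be symmetric, and identify it with $\Sym(f)$.

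First, I would observe that every element of $\Trp(f)$ is a convex combination of the polynomials in the finite orbit $\{\sigma(f):\sigma\in\sym_n\}$. Indeed, $T_\tau(g)=\tfrac12 g+\tfrac12\tau(g)$, so inductively any finite composition $T_{\tau_k}\cdots T_{\tau_1}(f)$ has the form $\sum_{\sigma\in\sym_n}c_\sigma\,\sigma(f)$ with $c_\sigma\geq 0$ and $\sum c_\sigma=1$. Consequently $\Trp(f)$, and hence $\overline{\Trp}(f)$, lies in the compact simplex
$$
K(f)=\Bigl\{\sum_{\sigma\in\sym_n}c_\sigma\,\sigma(f):c_\sigma\geq 0,\ \sum_\sigma c_\sigma=1\Bigr\}
$$
inside the finite-dimensional space spanned by the orbit of $f$. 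In particular $\overline{\Trp}(f)$ is compact, and it is invariant under every $T_\tau$ because each $T_\tau$ is continuous (linear) and maps $\Trp(f)$ into $\Trp(f)$.

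Next, since $\si$ is continuous on $K(f)$, it attains a minimum on $\overline{\Trp}(f)$ at some point $g^*$. For any transposition $\tau\in\sym_n$ we have $T_\tau(g^*)\in\overline{\Trp}(f)$, so by minimality $\si(T_\tau(g^*))\geq\si(g^*)$; combined with Lemma~\ref{index} this forces equality, and the equality clause of Lemma~\ref{index} gives $T_\tau(g^*)=g^*$, i.e.\ $\tau(g^*)=g^*$. Since transpositions generate $\sym_n$, the polynomial $g^*$ is $\sym_n$-invariant, hence symmetric.

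It remains to identify $g^*$ with $\Sym(f)$. Write $g^*=\sum_{\sigma}c^*_\sigma\,\sigma(f)\in K(f)$ (convexity is preserved under uniform limits on compacts, as $K(f)$ is closed). Applying $\Sym$, which is linear and satisfies $\Sym\circ\sigma=\Sym$ for every $\sigma\in\sym_n$, yields
$$
\Sym(g^*)=\sum_{\sigma}c^*_\sigma\,\Sym(\sigma(f))=\Bigl(\sum_\sigma c^*_\sigma\Bigr)\Sym(f)=\Sym(f).
$$
On the other hand $g^*$ is symmetric, so $\Sym(g^*)=g^*$; hence $g^*=\Sym(f)\in\overline{\Trp}(f)$.

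The only mildly delicate step is the closure property $T_\tau(\overline{\Trp}(f))\subseteq\overline{\Trp}(f)$, but this is immediate from the continuity of the linear operator $T_\tau$ together with $T_\tau(\Trp(f))\subseteq\Trp(f)$; everything else is a straightforward application of Lemma~\ref{index} and the averaging identity $\Sym\circ\sigma=\Sym$.
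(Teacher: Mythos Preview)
Your proof is correct and follows the same overall strategy as the paper: use $\si$ as a Lyapunov function, find a minimizer on the compact set $\overline{\Trp}(f)$, and invoke Lemma~\ref{index} to force the minimizer to be symmetric. The two arguments differ only in how compactness is obtained and in how the minimizer is identified with $\Sym(f)$.

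For compactness, the paper bounds $|T_\sigma(g)|_r\le|g|_r$ and appeals to Montel's theorem to extract convergent subsequences, then argues that $\si(\overline{\Trp}(f))$ is closed. Your observation that $\Trp(f)$ lies in the convex hull $K(f)$ of the finite orbit $\{\sigma(f):\sigma\in\sym_n\}$ is more elementary and more to the point: it places everything in a compact subset of a finite-dimensional space from the outset, so Montel's theorem is unnecessary. For the identification step, the paper simply asserts that a symmetric element of $\overline{\Trp}(f)$ must equal $\Sym(f)$ (implicitly using that $\Sym$ is constant on $\overline{\Trp}(f)$), whereas you spell this out via the convex-combination representation and $\Sym\circ\sigma=\Sym$; either way works, and one could also note directly that $\Sym\circ T_\tau=\Sym$ implies $\Sym$ is constant on $\Trp(f)$ and hence on its closure.
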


\begin{proof}
We claim that the set $\si\!\left(\overline{\Trp}(f)\right):=
\{\si(g): g \in  \overline{\Trp}(f) \}$ is closed. 
Suppose that $x_k \rightarrow x$ as $k\to \infty$, where $x_k= \si(g_k)$ with 
$g_k \in \overline{\Trp}(f)$ for $k\in\NN$.  
Let $|\cdot|_r$ be the supremum norm on the ball of radius $r$ in $\CC^n$. 
If $\sigma \in \sym_n$ we have by the triangle inequality and invariance 
under permutations that 
$|T_\sigma(g)|_r \leq  |g|_r$. It follows that $|h|_r \leq |f|_r$ for all 
$h \in  \overline{\Trp}(f)$. Hence, by Montel's theorem,  
$\{g_k\}_{k\in\NN}$ forms a normal family so there is a subsequence 
converging uniformly on compacts 
to a polynomial $g \in \overline{\Trp}(f)$ with $\si(g) =x$. 

Hence 
$
y:=\inf \si\!\left(\overline{\Trp}(f)\right)
$
is achieved for some 
$g \in \overline{\Trp}(f)$. If $\tau(g) \neq g$ for some transposition
 $\tau \in \sym_n$ then by Lemma~\ref{index} we have  
$\si(T_\tau(g)) < \si(g)$. However, one clearly has 
$T_\tau(g) \in \overline{\Trp}(f)$, which contradicts the minimality of 
$\si(g)$. We deduce that $\tau(g)=g$ for all transpositions 
$\tau \in \sym_n$ and thus $g = \Sym(f)$ and $\si(g)=0$.
\end{proof}

\section*{Acknowledgments}

We wish to thank the Isaac Newton Institute for Mathematical Sciences, 
University of Cambridge, for generous support during the programme  
``Combinatorics and Statistical Mechanics'' (January--June 2008), 
where this work, \cite{BB-I} and \cite{BBL} 
were presented in a series of lectures. Special thanks are due 
to the organizers and the participants in this programme for 
numerous stimulating discussions. 
We would also like to thank Alex Scott, 
B\'ela Bollob\'as and Richard Kenyon for the opportunities to announce 
these results in June--July 2008 through talks given at the 
Mathematical Institute, University of Oxford, the Centre for Mathematical 
Sciences, University of Cambridge, and the programme ``Recent Progress in 
Two-Dimensional Statistical Mechanics'', Banff International 
Research Station, respectively.

\end{document}